\theoremstyle{plain}
\newtheorem{proposition}{Proposition}[section]
\newtheorem{defi}[proposition]{Definition}
\newtheorem{lemma}[proposition]{Lemma}
\newtheorem{thm}[proposition]{Theorem}
\newtheorem{cor}[proposition]{Corollary}
\newtheorem{rmk}[proposition]{Remark}
\newtheorem{ex}[proposition]{Example}
\newcommand{\ct}[1]{\mathcal{#1}}
\newcommand{\ov}[1]{\overline{#1}}
\newcommand{\pr}[1]{\prescript{\vee}{}{#1}}
\newcommand{\un}[0]{\mathtt{1}}
\newcommand{\field}[0]{\mathbb{K}}
\newcommand{\bim}[0]{\prescript{}{A}{\mathcal{M}}_{A}}
\newcommand{\tn}[0]{\otimes}
\newcommand{\cvl}[0]{\mathrm{coev}}
\newcommand{\evl}[0]{\mathrm{ev}}
\newcommand{\cvr}[0]{\underline{\mathrm{coev}}}
\newcommand{\evr}[0]{\underline{\mathrm{ev}}}
\newcommand{\omx}[3]{\prescript{}{#1}{#2}_{#3}}
\newcommand{\op}[0]{\mathrm{op}}
\newcommand{\id}[0]{\mathrm{id}_{\ct{C}}}
\begin{document}

\title{Pivotal Objects in Monoidal Categories and Their Hopf Monads}
\author{Aryan Ghobadi \\ \small{Queen Mary University of London }\\\small{ School of Mathematics, Mile End Road}\\\small{ London E1 4NS, UK }\\ \small{Email: a.ghobadi@qmul.ac.uk}}
\date{}

\maketitle
\begin{abstract}
An object $P$ in a monoidal category $\ct{C}$ is called pivotal if its left dual and right dual objects are isomorphic. Given such an object and a choice of dual $Q$, we construct the category $\ct{C}(P,Q)$, of objects which intertwine with $P$ and $Q$ in a compatible manner. We show that this category lifts the monoidal structure of $\ct{C}$ and the closed structure of $\ct{C}$, when $\ct{C}$ is closed. If $\ct{C}$ has suitable colimits we show that $\ct{C}(P,Q)$ is monadic and thereby construct a family of Hopf monads on arbitrary closed monoidal categories $\ct{C}$. We also introduce the pivotal cover of a monoidal category and extend our work to arbitrary pivotal diagrams. 
\end{abstract}
\begin{footnotesize}2010\textit{ Mathematics Subject Classification}: 18D10, 18D15, 16T99, 18C20
\\\textit{Keywords}: Monoidal category, closed category, pivotal category, Hopf monad, Hopf algebra, tensor category\end{footnotesize}
\section{Introduction}
Hopf monads were originally introduced as generalisations of Hopf algebras in braided monoidal categories, from the setting of braided categories to arbitrary monoidal categories. In \cite{moerdijk2002monads}, Hopf monads were defined as monads which lift the monoidal structure of a monoidal category to their category of modules. These monads are now referred to as \emph{bimonads} or \emph{opmonoidal monads}, whereas monads which lift the closed structure of a closed monoidal category as well as its tensor, are called Hopf monads, according to \cite{bruguieres2011hopf}. Initially, Hopf monads were defined for rigid monoidal categories \cite{bruguieres2007hopf} and many of the usual results about Hopf algebras were extended to this setting. Additionally, Hopf monads have proved of particular importance in the study of tensor categories \cite{bruguieres2011exact} and topological field theories \cite{turaev2017monoidal}. However, in both settings the categories in consideration are rigid. Although in \cite{bruguieres2011hopf}, the theory of Hopf monads was extended to arbitrary monoidal categories, many of the essential theorems discussed in \cite{bruguieres2007hopf} have not been extended to the general setting. We believe this is partly due to the lack of examples which have been studied, when the category is not rigid. Inspired from our work on bimodule connections and Hopf algebroids in \cite{ghobadi2020hopf}, under the setting of noncommutative differential geometry \cite{beggs2019quantum}, in the present work, we study objects in arbitrary monoidal categories, which have isomorphic left and right duals. We call these objects pivotal and when provided with such a pivotal pair, $P$ and $Q$, in a monoidal category $\ct{C}$, we construct the category of $P$ and $Q$ intertwined objects $\ct{C}(P,Q)$. The constructed category lifts the monoidal structure of $\ct{C}$ and the closed structure of $\ct{C}$, when $\ct{C}$ is closed. Consequently, if $\ct{C}$ is closed and has suitable colimits, we construct a Hopf monad corresponding to such a pivotal pair so that its Eilenberg-Moore category recovers $\ct{C}(P,Q)$. The merit of this construction is that it does not require the category to be braided or even to have a non-trivial center. We later show that $\ct{C}(P,Q)$ can be constructed as the dual of a certain monoidal functor and our monad can also be recovered from the Tannaka-Krein reconstruction of bimonads developed in the recent work \cite{shimizu2019tannaka}. 

In \cite{ghobadi2020hopf}, we construct a family of Hopf algebroids corresponding to first order differential caluli. The modules of the Hopf algebroid recover a closed monoidal subcategory of bimodule connections, however, the differential calculus must be pivotal for the construction to work. We explain this ingredient in a much more general setting, here. If $\ct{C}$ is a monoidal category and $P$ an object in $\ct{C}$, one can construct a category of $P$-intertwined objects, whose objects are pairs $(A,\sigma)$, where $A$ is an object of $\ct{C}$ and $\sigma :A\tn P\rightarrow P\tn A$ an invertible morphism. This category naturally lifts the monoidal structure of $\ct{C}$, in a similair fashion to the monoidal structure of the center of $\ct{C}$. The key feature in our construction is the following: if $P$ is pivotal and we choose a dual, $Q$, of $P$, then any invertible morphism $\sigma$ induces two $Q$-intertwinings on the object $A$, namely \ref{Eqovsig1} and \ref{Eqovsig2}. In order to obtain a closed monoidal category, we must restrict to the subcategory of pairs where these induced $Q$-intertwinings are inverse. We denote this category, corresponding to the pivotal pair $P$ and $Q$, by $\ct{C}(P,Q)$ and describe its monoidal structure in Theorem \ref{TMon}. Our main results are Theorem \ref{TCld} and Corollary \ref{CRCld}, which show that $\ct{C}(P,Q)$ lifts left and right closed structures on $\ct{C}$, when they exist. We also discuss the construction in the cases where $\ct{C}$ is rigid, Corollary \ref{CRig}, and when $\ct{C}$ has a pivotal structure which is compatible with $P$ and $Q$, Theorem \ref{TpivCP}.

Pivotal categories were introduced in \cite{barrett1999spherical} and their study is vital for topological field theories, \cite{turaev2017monoidal}. However, a study of individual objects in a monoidal category which have isomorphic left and right duals has not been produced. In \cite{shimizu2015pivotal}, the pivotal cover of a rigid monoidal category was introduced, in connection with Frobenius-Schur indicators discussed in \cite{ng2007frobenius}. We introduce the pivotal cover $\ct{C}^{piv}$ of an arbitrary monoidal category $\ct{C}$, in Definition \ref{DPivMo}, from a different point of view which arose in \cite{ghobadi2020hopf}, namely pivotal morphisms. The pivotal cover of a monoidal category has pivotal pairs as objects, and suitable pivotal morphisms between them, so that any strong monoidal functor from a pivotal category to the original category, factors through the pivotal cover, Theorem \ref{TCPiv}. The construction in \cite{shimizu2015pivotal} requires all objects to have left duals and a choice of distinguished left dual for each object i.e. for the category to be left rigid, while our construction avoids these issue by taking pivotal pairs as objects of $\ct{C}^{piv}$. 

The applications of our work are spread as examples throughout the article. In Section \ref{SPivOb}, we observe that dualizable objects in braided categories and ambidextrous adjunctions are simply examples of pivotal objects in monoidal categories. In Section 4.2 of \cite{ghobadi2020hopf}, we have presented several other examples, in the format of differential calculi, where the space of 1-forms is a pivotal object in the monoidal category of bimodules over the algebra of noncommutative functions. We explain this setting briefly in Example \ref{Ebim}. The Hopf monad constructed in this case, becomes a Hopf algebroid, Example \ref{EHpfAlebroid}, which is a subalgebra of the Hopf algebroid of differential operators defined in \cite{ghobadi2020hopf}. Additionally, to construct the sheaf of differential operators in the setting of \cite{ghobadi2020hopf}, we require the wedge product between the space of 1-forms and 2-forms to be a pivotal morphism. A direct consequence of Example \ref{EPivMo} is that any bicovariant calculus over a Hopf algebra, satisfies this condition. 

As alluded to in Example \ref{EHopfAlg}, the Hopf monads constructed here are in some sense a noncommutative version of the classical Hopf algebra $\ct{O}(GL(n))$, where instead of $n\times n$ matricies, the monad provides matrix-like actions of pivotal pairs. In Remark \ref{RNGL}, we note that for a finite dimensional vector space, the resulting Hopf algebra becomes precisely a quotient of the free matrix Hopf algebra, $\ct{NGL}(n)$, discussed in \cite{skoda2003localizations}. In fact, any invertible $n\times n$-matrix provides us with a pivotal pairing of an $n$-dimensional vectorspace and thereby a Hopf algebra, Example \ref{EHopfMany}. More generally, in Theorem \ref{ThmTAug}, we show that the Hopf monad constructed is augmented if and only if the pair $P$ and $Q$ is a pivotal pair in the center of the monoidal category. Consequently, using the theory of augmented Hopf monads from \cite{bruguieres2011hopf}, we a construct the braided Hopf algebra corresponding to every pivotal pair in the center of the monoidal category. 

Finally, we must point out how our work can be interpreted in terms of \cite{shimizu2019tannaka}. Duals of monoidal functors were defined in \cite{majid1992braided}, by Majid, as a generalisation of the center of a monoidal category. Tannaka-Krein reconstruction for Hopf monads, as described in \cite{shimizu2019tannaka}, takes the data of a strong monoidal functor and produces a Hopf monad whose module category, recovers the dual of the monoidal functor. In Section \ref{SDual}, we briefly review these topics and show that pivotal pairs in a monoidal category correspond to strict monoidal functors from the smallest pivotal category, which we call $\mathrm{Piv}(1)$, into the category. From this point of view, one can recover the same Hopf monad structure from the approach of Tannaka-Krein reconstruction. We also provide an additional result concerning the pivotal structure of the dual monoidal category.   

Lastly, we would like to remark that the Hopf monads constructed here should be the simplest examples which generalise the theory of Hopf algebroids with bijective antipodes, described in \cite{bohm2004hopf}, to the monadic setting. Although antipodes for Hopf monads have been discussed in both the rigid setting \cite{bruguieres2007hopf} and the general setting \cite{bohm2016hopf}, neither cover the case of Hopf algebroids over noncommutative bases which admit bijective antipodes. When restricted to the category of bimodules over an arbitrary algebra, our Hopf monads correspond to Hopf algebroids which in fact admit involutory antipodes, Example \ref{EHpfAlebroid}.
 
\textbf{Organisation:} In Section \ref{SPre}, we review the theory of Hopf monads and braided Hopf algebras and the necessary background on duals and closed structures in monoidal categories. In Section \ref{SPivOb}, we introduce the notion of pivotal objects and morphisms in an arbitrary monoidal categories and introduce the pivotal cover. In Section \ref{SC(P)}, we construct $\ct{C}(P,Q)$ and review some of its properties and in Section \ref{SMnd} we construct its corresponding Hopf monad. In Section \ref{SExtension}, we briefly discuss the generalisation of our work to arbitrary pivotal diagrams and in Section \ref{SDual} we provide an alternative description of $\ct{C}(P,Q)$ as the dual of a monoidal functor. The proof of Theorem \ref{TCld} requires several large commutative diagrams which are presented in Section \ref{SDiag}, at the end of our work. 
 
\section{Preliminaries}\label{SPre}
We assume basic categorical knowledge and briefly recall the theory of monads and monoidal categories from \cite{mac2013categories,turaev2017monoidal} and the theory of Hopf monad and bimonads from \cite{bruguieres2011hopf,bruguieres2007hopf}.
\subsection{Monads}
A \emph{monad} $T$ on a category $\mathcal{C}$, consists of a triple $(T,\mu , \eta)$, where $T:\ct{C}\rightarrow \ct{C}$ is an endofunctor with natural transformations $\mu :TT\rightarrow T$ and $\eta : \mathrm{id}_\mathcal{C} \rightarrow T$ satisfying satisfying $\mu (T\mu )=\mu \mu_{T}$ and $ \mu T\eta = \mathrm{id}_{T} = \mu \eta_{T} $. Any monad gives rise to an adjunction $F_{T}\dashv U_{T}: \mathcal{C}_{T}\leftrightarrows \mathcal{C}$, where $\mathcal{C}_{T}$ is the \emph{Eilenberg-Moore category} associated to $T$. The category $\mathcal{C}_{T}$ consists of pairs $(X,r)$, where $X$ is an object of $\mathcal{C}$ with a $T$\emph{-action} $r:TX\rightarrow X$, satisfying $r\mu_{X}=r(Tr)$ and $r\eta=\mathrm{id}_{X}$ and morphims which commute with such $T$-actions. The \emph{free} functor is defined by $F_{T}(X)=(TX,\mu_{X})$ and the \emph{forgetful} functor by $U_{T}(X,r)=X$. Conversely, any adjunction $F\dashv G: \mathcal{D}\leftrightarrows \mathcal{C}$ gives rise to a monad via its unit $\eta : \mathrm{id}_{\mathcal{C}}\rightarrow GF$ and counit $\epsilon :FG\rightarrow \mathrm{id}_{\mathcal{D}}$. The triple produced is $(GF, G\epsilon_{F},\eta)$. Hence, there is a natural functor $K=F_{T}G:\mathcal{D}\rightarrow \mathcal{C}_{T}$ called the \emph{comparison functor}. We say functor $G$ is \emph{monadic} if $K$ is an equivalence of categories. For more detail on monads, we refer the reader to Chapter VI of \cite{mac2013categories}, since we will only present Beck's Theorem and later utilise it. 
\begin{thm}\label{TBeck}[Beck's Theorem] Given an adjunction $F\dashv G: \mathcal{D}\leftrightarrows \mathcal{C}$, $G$ is monadic if and only if the functor $G$ creates coequalizers for parallel pairs $f,g: X\rightrightarrows Y$ for which $Gf, Gg$ has a split coequalizer.
\end{thm}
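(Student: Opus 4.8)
The plan is to treat the two implications separately. For the ``only if'' direction, since $G$ monadic means the comparison functor $K$ is an equivalence with $U_T K = G$, and ``creating coequalizers of $G$-split pairs'' is transported along such an equivalence, it suffices to prove that the forgetful functor $U_T \colon \mathcal{C}_T \to \mathcal{C}$ creates coequalizers of the parallel pairs it sends to a split coequalizer. For the ``if'' direction, the task is to show that, under the creation hypothesis, $K = F_T G \colon \mathcal{D} \to \mathcal{C}_T$ is an equivalence, which I would do by constructing an explicit quasi-inverse $L \colon \mathcal{C}_T \to \mathcal{D}$.

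The ``only if'' direction is a transport-of-structure argument. Suppose $f, g \colon (X,a) \rightrightarrows (Y,b)$ are morphisms in $\mathcal{C}_T$ whose underlying pair in $\mathcal{C}$ has a split coequalizer $q \colon Y \to Z$. The key point is that \emph{every} functor preserves split coequalizers, so $Tq$ and $TTq$ are again split coequalizers, in particular epimorphisms. Since $q \circ b \colon TY \to Z$ coequalizes $Tf$ and $Tg$, there is a unique $c \colon TZ \to Z$ with $c \circ Tq = q \circ b$; precomposing with the epimorphisms $Tq$ and $TTq$ and using the monad and algebra identities for $(Y,b)$ shows that $(Z,c)$ is a $T$-algebra, that $q \colon (Y,b) \to (Z,c)$ is a morphism of algebras, and that it is a coequalizer of $f, g$ in $\mathcal{C}_T$ (any algebra map annihilating $f, g$ factors through $q$ in $\mathcal{C}$, and the factorisation is again a morphism of algebras since $Tq$ is epi). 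As $c$ is forced by $c \circ Tq = q \circ b$, the pair $((Z,c), q)$ is the unique lift of the given split coequalizer, so $U_T$ creates it.

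For the ``if'' direction, write $T = GF$ with unit $\eta$ and counit $\epsilon$. Given a $T$-algebra $(X,x)$, consider $\epsilon_{FX},\, Fx \colon FGFX \rightrightarrows FX$ in $\mathcal{D}$; applying $G$ gives $\mu_X,\, Tx \colon TTX \rightrightarrows TX$, of which $x \colon TX \to X$ is a split coequalizer with splittings $\eta_X$ and $\eta_{TX}$, by the monad and algebra axioms. The creation hypothesis then supplies a coequalizer $e_X \colon FX \to L(X,x)$ in $\mathcal{D}$ with $G L(X,x) = X$ and $G e_X = x$; a morphism of algebras induces a morphism of these coequalizer cocones, so $L$ is a functor with $G L = U_T$. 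For $K L \cong \mathrm{id}_{\mathcal{C}_T}$: the adjoint transpose of $e_X$ is $G e_X \circ \eta_X = x \circ \eta_X = \mathrm{id}_X$, so $e_X = \epsilon_{L(X,x)}$ and hence $K L(X,x) = (G L(X,x),\, G\epsilon_{L(X,x)}) = (X,x)$, naturally in $(X,x)$. For $L K \cong \mathrm{id}_{\mathcal{D}}$: one must show that the counit $\epsilon_D \colon FGD \to D$ exhibits $D$ as the coequalizer in $\mathcal{D}$ of $\epsilon_{FGD},\, FG\epsilon_D \colon FGFGD \rightrightarrows FGD$; since $G$ sends this pair to a split coequalizer with cocone $G\epsilon_D$, and $\epsilon_D$ is itself a cocone over it lying above $G\epsilon_D$, the uniqueness clause in ``$G$ creates this coequalizer'' identifies $D$ with $L K D$ up to a canonical isomorphism natural in $D$.

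I expect this last step — producing the canonical $G$-split presentation $FGFGD \rightrightarrows FGD \to D$ of an arbitrary object of $\mathcal{D}$ and checking that the comparison maps are isomorphisms natural in $D$ — to be the main obstacle, since it is exactly where the creation hypothesis is used essentially; the ``only if'' implication, the functoriality of $L$, and the manipulations with the triangle identities are otherwise routine.
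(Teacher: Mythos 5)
The paper does not prove this statement: it is quoted as Beck's Theorem and delegated to Chapter VI of Mac Lane's \emph{Categories for the Working Mathematician}, so there is no in-paper argument to compare against. Your proposal is the standard textbook proof of the precise monadicity theorem and is correct in both directions: the ``only if'' half correctly reduces to $U_T$ via the comparison equivalence and uses that split coequalizers are absolute (so $Tq$, $TTq$ are epi and the algebra structure $c$ on the quotient is forced), and the ``if'' half builds the quasi-inverse $L$ from the canonical $G$-split presentation $\mu_X, Tx\colon TTX\rightrightarrows TX$ of an algebra $(X,x)$, identifies $e_X$ with $\epsilon_{L(X,x)}$ via the adjoint transpose, and uses the uniqueness clause of creation on the presentation $FGFGD\rightrightarrows FGD\to D$ to get $LK\cong \mathrm{id}_{\mathcal{D}}$. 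The only point worth flagging is a well-known mismatch of conventions: with the strict notion of ``creates'' (lifts unique on the nose) the theorem characterises $K$ being an \emph{isomorphism} of categories, whereas the paper defines monadic as $K$ being an \emph{equivalence}; your argument implicitly uses the isomorphism-flexible notion of creation (e.g.\ when transporting creation along the equivalence $K$, and in concluding $LKD\cong D$ rather than $LKD=D$), which is the reading consistent with the paper's definition, so no actual gap results.
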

\subsection{Monoidal Categories}
We call $(\mathcal{C}, \otimes, 1_{\otimes},\alpha,l,r)$ a monoidal category, where $\mathcal{C}$ is a category, $1_{\otimes}$ an object of $\mathcal{C}$, $\otimes:\mathcal{C}\times \mathcal{C}\rightarrow\mathcal{C}$ a bifunctor and $\alpha: (\mathrm{id}_{\mathcal{C}}\otimes  \mathrm{id}_{\mathcal{C}})\otimes \mathrm{id}_{\mathcal{C}}\rightarrow \mathrm{id}_{\mathcal{C}}\otimes(\mathrm{id}_{\mathcal{C}}\otimes \mathrm{id}_{\mathcal{C}}), l:1_{\otimes}\otimes \mathrm{id}_{\mathcal{C}}\rightarrow \mathrm{id}_{\mathcal{C}}$ and $r:\mathrm{id}_{\mathcal{C}}\otimes 1_{\otimes} \rightarrow \mathrm{id}_{\mathcal{C}}$ natural isomorphisms satisfying coherence axioms, as presented in Section 1.2 of \cite{turaev2017monoidal}. We call $\un_{\tn}$ the \emph{monoidal unit}.

In what follows, we assume that all monoidal categories in question are \emph{strict} i.e. $\alpha, l$ and $r$ are all identity morphisms. Additionally, there exists a corresponding monoidal structure on $\ct{C}$, the \emph{opposite monoidal category}, which we denote by $(\ct{C},\tn^{op})$ and is defined by composing $\tn$ with the flip functor i.e. $X\tn^{\op} Y=Y\tn X$ for pairs of objects $X,Y$ of $\ct{C}$. This notion should not be confused with the notion of the opposite category, where morphisms are reversed and $\ct{C}^{\op}$ will only refer to the opposite monoidal category in this work.

A functor $F:\mathcal{C}\rightarrow\mathcal{D}$ between monoidal categories is said to be \emph{(strong) monoidal} if the exists a natural (isomorphism) transformation $F_{2}(-,-): F(-)\otimes_{\mathcal{D}}F(-)\rightarrow F(-\otimes_{\mathcal{C}}-)$ and a (isomorphism) morphism $F_{0} : 1_{\otimes}\rightarrow F(1_{\otimes})$ satisfying 
\begin{align*}
F_{2}(X\otimes Y,Z)( F_{2}(X,Y)\otimes \mathrm{id}_{F(Z)})=F_{2}&(X, Y\otimes Z)(\mathrm{id}_{F(X)}\otimes  F_{2}(Y,Z))
\\F_{2}(X,1_{\otimes})(\mathrm{id}_{F(X)}\otimes F_{0})=\mathrm{id}_{F(X)}&=F_{2}(1_{\otimes},X)(F_{0}\otimes \mathrm{id}_{F(X)})
\end{align*}
where we have omitted the subscripts denoting the ambient categories, since they are clear from context. A functor is said to be \emph{opmonoidal} or \emph{comonoidal} if all morphisms in the above definition are reversed. A strong monoidal fucntor $F$ is called \emph{strict} monoidal if the natural isomorphisms $F_{2}$ and $F_{0}$ are identity morphisms. A monoidal category is said to be \emph{braided} if there exists a natural isomorphism $\Psi_{X,Y}: X\tn Y\rightarrow Y\tn X $ satisfying braiding axioms described in Section 3.1 of \cite{turaev2017monoidal}. 

\textbf{Notation.} We will abuse notation and write $X$ instead of the morphism $\mathrm{id}_{X}$ whenever it is feasible. We will also omit $\tn$ when writing long compositions of morphisms i.e. $AfB$ will denote the morphisms $\mathrm{id}_{A}\tn f \tn \mathrm{id}_{B}$ for arbitrary objects $A $ and $B$ and morphism $f$ in $\ct{C}$.

The \emph{(lax) center} of a monoidal category $(\mathcal{C},\otimes,1_{\otimes})$ has pairs $(X,\tau)$ as objects, where $X$ is an object in $\mathcal{C}$ and $\tau : X\otimes -\rightarrow -\otimes X$ is a natural (transformation) isomorphism satisfying $\tau_{1_{\otimes}}= \mathrm{id}_{X}$ and $(\mathrm{id}_{M}\otimes \tau_{N})(\tau_{M}\otimes \mathrm{id}_{N})=\tau_{M\otimes N}$, and morphisms $f: X\rightarrow Y$ of $\mathcal{C}$, satisfying $(\mathrm{id}_{\mathcal{C}}\otimes f)\tau =\nu (f\otimes \mathrm{id}_{\mathcal{C}})$, as morphism $f:(X,\tau)\rightarrow (Y,\nu )$. We denote the lax center and center by $Z^{lax}(\mathcal{C})$ and $Z(\mathcal{C})$, respectively. The center is often referred to as the \emph{Drinfeld-Majid center} and the lax center is sometimes referred to as the \emph{prebraided} or \emph{weak} center. The (lax) center has a monoidal structure via 
$$ (X,\tau )\otimes (Y, \nu):= (X\otimes Y , (\tau \otimes \mathrm{id}_{Y})(\mathrm{id}_{X}\otimes\nu ) )$$ 
and $(1_{\otimes}, \mathrm{id}_{\ct{C}})$ acting as the monoidal unit, so that the forgetful functor to $\mathcal{C}$ is strict monoidal. The center $Z(\ct{C})$ is also braided by $\Psi_{(X,\tau ), (Y, \nu)} = \tau_{Y}$.

\subsection{Rigid and Closed Monoidal Categories}\label{SRigClos}
For any object, $X$, in a monoidal category $\mathcal{C}$, we say an object $\pr{X}$ is a \emph{left dual} of $X$, if there exist morphisms $\mathrm{ev}_{X}:\pr{X}\otimes X\rightarrow \un_{\otimes}$ and $\mathrm{coev}_{X}:\un_{\otimes}\rightarrow X\otimes \pr{X}$ such that 
$$(\mathrm{ev}_{X}\otimes \mathrm{id}_{\pr{X}})(\mathrm{id}_{\pr{X}}\otimes \mathrm{coev}_{X})=\mathrm{id}_{\pr{X}} , \quad (\mathrm{id}_{X}\otimes \mathrm{ev}_{X} )(\mathrm{coev}_{X}\otimes \mathrm{id}_{X})=\mathrm{id}_X$$ 
In such a case, we call $X$ a \emph{right dual} for $\pr{X}$. Furthermore, a right dual of an object $X$ is denoted by $X^{\vee}$, with evalutation and coevaluation maps denoted by $\underline{\mathrm{ev}}_{X}:X\otimes X^{\vee}\rightarrow \un_{\otimes}$ and $\underline{\mathrm{coev}}_{X}:\un_{\otimes}\rightarrow X^{\vee}\otimes X$, respectively. We will refer to evaluation and coevaluation maps as such, as \emph{duality morphisms}. We say an object $X$ is \emph{dualizable} if has both a left dual and a right dual. The category $\mathcal{C}$ is said to be left (right) \emph{rigid} or \emph{autonomous} if all objects have left (right) duals. If a category is both left and right rigid, we simply call it \emph{rigid}. Usually, when a category is said to be left (or right rigid), it is assumed that we have \emph{chosen} a left dual for all objects and $\pr{X}$ denotes this specific choice of left dual for any object $X$. Given these choices, we have a contravariant functor $\pr{(-)}:\ct{C}\rightarrow \ct{C}$ which sends objects $X$ to their left duals $\pr{X}$ and morphisms $f:X\rightarrow Y$ to morphisms $(\evl_{Y} \tn \mathrm{id}_{\pr{X}})(\mathrm{id}_{\pr{Y}}\tn f\tn \mathrm{id}_{\pr{X}} ) ( \mathrm{id}_{\pr{Y}}\tn \cvl_{X})$. Similarly, $(-)^{\vee}:\ct{C}\rightarrow \ct{C}$ defines a contravariant functor on a right rigid category. 

We call a category $\mathcal{C}$ \emph{left (right) closed} if for any object $X$ there exists an endofunctor $[X,-]^{l}$ (resp. $[X,-]^{r}$) on $\mathcal{C}$ which is right adjoint to $-\otimes X$ (resp. $X\otimes -$). By definition $[-,-]^{l},[-,-]^{r}:\mathcal{C}^{op}\times\mathcal{C}\rightarrow \mathcal{C}$ are bifunctors, which we refer to as \emph{inner homs}. If a category is left and right closed, we call it \emph{closed}. Observe that if $X$ has a left (right) dual $\pr{X}$ (resp. $X^{\vee}$), the functor $-\otimes \pr{X}$ (resp. $X^{\vee}\otimes -$) is right adjoint to $- \otimes X$ (resp. $X\otimes -$) and $\pr{X}$ (resp. $X^{\vee}$) is unique up to isomorphism. Furthermore,  if $X$ has a left (right) dual, $\pr{X}\cong [X,1_{\otimes}]^{l}$ (resp. $X^{\vee}\cong [X,1_{\otimes}]^{r}$). We have adopted the notation of \cite{bruguieres2011hopf} here, and what we refer to as a left closed structure is sometimes referred to as a right closed structure in other sources.

It is well known that strong monoidal functors preserve dual objects i.e. $F(\pr{X})\cong \pr{F(X)}$ with $F_{0}F(\mathrm{ev})F_{2}(\pr{X},X)$ and $ F_{2}^{-1}(X,\pr{X}) F(\mathrm{coev})F_{0}^{-1}$ acting as the evaluation and coevaluation morphisms for $F(\pr{X})$. For left (right) closed monoidal categories $\ct{C}$ and $\ct{D}$, we say a monoidal functor $F:\ct{C}\rightarrow \ct{D}$ is left (right) closed if the canonically induced morphism $F[X,Y]^{l (r)}_{\ct{C}}\rightarrow [F(X),F(Y)]^{l (r)}_{\ct{D}}$ is an isomorphism for any pair of objects $X,Y$ in $\ct{C}$. In what follows, we will describe the inner homs so that these isomorphisms become the identity morphisms, thereby making the functor in question closed, in a clearer manner.

\subsection{Bimonads and Hopf Monads}

A monad $(T,\mu , \eta )$ on $\mathcal{C}$ is said to be a \emph{bimonad} or an \emph{opmonoidal monad} if it also has a compatible comonoidal structure $(T_{2},T_{0})$ satisfying 
$$T_{2} (X,Y) \mu_{X\otimes Y}=(\mu_{X}\otimes\mu_{Y}) T_{2}(TX,TY)T\left(T_{2}(X,Y)\right)$$
$$T_{0}\mu_{\mathtt{1}}=T_{0}T(T_{0}), \quad  T_{2}(X,Y)\eta_{X\otimes Y} = \eta_{X}\otimes \eta_{Y}, \quad T_{0}\eta_{\mathtt{1}}=\mathrm{id}_{\mathtt{1}}$$ 
where $X,Y$ are objects of $\ct{C}$. A bimonad is said to be \emph{left (right) Hopf} if the left (right) \emph{fusion operators}, denoted by $H^{l}$ (resp. $H^{r}$), and defined as  
$$\xymatrix@R-25pt{H^{l}_{X,Y}:= (\mathrm{id}_{T(X)}\otimes\mu_{Y})T_{2}(X,T(Y)): T( X\otimes T(Y))\longrightarrow T(X)\otimes T(Y)
\\ H^{r}_{X,Y}:= (\mu_{X}\otimes \mathrm{id}_{T(Y)})T_{2}(T(X),Y): T( T(X)\otimes Y)\longrightarrow  T(X)\otimes T(Y) } $$
for objects $X,Y$ of $\ct{C}$, is invertible. A bimonad is called \emph{Hopf} if it is both left and right Hopf. The above conditions can be reformulated purely in terms of $F_{T}$ and $U_{T}$: given an adjunction $F\dashv G:\ct{D}\leftrightarrows \ct{C}$, the induced monad $(GF, G\epsilon_{F},\eta)$ is a bimonad if and only if $U$ is strong monoidal. In this case, the adjunction is called \emph{comonoidal}. The adjunction is called left (right) \emph{Hopf} if $U$ is a left (right) closed functor. We briefly recall the main property of these structures and refer the reader to \cite{bruguieres2011hopf} for more detail on bimonads and Hopf monads.
\begin{thm} If $T$ is a monad on a monoidal category $\mathcal{C}$, then 
\begin{enumerate}[(I)]
\item \cite{moerdijk2002monads} Bimonad structures on $T$ are in correspondence with liftings of the monoidal structure of $\mathcal{C}$ onto $\mathcal{C}_{T}$ i.e. monoidal structures on $\mathcal{C}_{T}$ such that $U_{T}$ is strong monoidal. 
\item \cite{bruguieres2011hopf},\cite{bruguieres2007hopf} If $C$ is left (right) rigid and $T$ a bimonad, then $T$ is left (right) Hopf if and only if $C_{T}$ is left (right) rigid.
\item \cite{bruguieres2011hopf} If $\mathcal{C}$ is left (right) closed, then $T$ being left (right) Hopf is equivalent to $\mathcal{C}_{T}$ being left (right) closed and $U_{T}$ a left (right) closed functor. 
\end{enumerate}
\end{thm}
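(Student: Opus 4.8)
All three statements are classical, so the plan is to recall the constructions realising each equivalence and to point out why the bimonad, respectively Hopf, axioms are exactly what make them work; full details are in \cite{moerdijk2002monads,bruguieres2007hopf,bruguieres2011hopf}. For (I), given a bimonad $(T,\mu,\eta,T_{2},T_{0})$ I would put a tensor product on $\ct{C}_{T}$ by
$$(X,r)\tn(Y,s):=\bigl(X\tn Y,\ (r\tn s)\,T_{2}(X,Y)\bigr),$$
with unit object $(\un_{\tn},T_{0})$, and check that $(r\tn s)T_{2}(X,Y)$ is a $T$-action using the compatibility of $T_{2}$ with $\mu$ and $\eta$, and that the associativity and unit constraints of $\ct{C}$ are morphisms of $T$-modules because $T_{2},T_{0}$ satisfy the comonoidal coherence axioms; then $U_{T}$ is strict monoidal by construction. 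Conversely, a monoidal structure on $\ct{C}_{T}$ for which $U_{T}$ is strong monoidal makes $F_{T}\dashv U_{T}$ a comonoidal adjunction, so the left adjoint $F_{T}$ is canonically comonoidal; applying $U_{T}$ to the comonoidal structure maps of $F_{T}$ yields natural morphisms $T_{2}(X,Y):T(X\tn Y)\to TX\tn TY$ and $T_{0}:T\un_{\tn}\to\un_{\tn}$, and the triangle identities of the adjunction translate comonoidality of $F_{T}$ into the bimonad axioms. That the two assignments are mutually inverse is a diagram chase.

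For (II), suppose $\ct{C}$ is left rigid and $T$ left Hopf. For a module $(X,r)$ I would build a $T$-action on the left dual $\pr{X}$ out of $r$, the duality morphisms $\evl_{X},\cvl_{X}$, and the inverse left fusion operator $(H^{l})^{-1}$; the role of left-Hopfness is precisely that $(H^{l})^{-1}$ is the ingredient needed to produce from $\evl_{X},\cvl_{X}$ a pair of duality morphisms in $\ct{C}_{T}$ satisfying the zig-zag identities, exhibiting $(\pr{X},-)$ as a left dual of $(X,r)$. Conversely, if $\ct{C}_{T}$ is left rigid then every $F_{T}(X)$ has a left dual, and comparing its evaluation and coevaluation with the comonoidal structure of $F_{T}$ produces an explicit two-sided inverse of $H^{l}_{X,Y}$, so $T$ is left Hopf; one may alternatively route the argument through the antipode of a Hopf monad on a rigid category. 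The right-rigid statement is the mirror image under $\tn\leftrightarrow\tn^{\op}$.

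For (III), when $\ct{C}$ is left closed I would lift the inner hom: given modules $(X,r),(Y,s)$, equip $[X,Y]^{l}$ with the $T$-action obtained by combining $r$, $s$, and the mate of $(H^{l})^{-1}$, so that $-\tn(X,r)\dashv[(X,r),-]^{l}$ is an adjunction on $\ct{C}_{T}$ and $U_{T}$ carries this inner hom to $[X,Y]^{l}$ on the nose; invertibility of $H^{l}$ is exactly what makes the unit and counit of this adjunction morphisms of $T$-modules, so $U_{T}$ is a left closed functor. Conversely, given a lifted left closed structure for which $U_{T}$ is left closed, the inverse of $H^{l}_{X,Y}$ is recovered as the adjunct of the canonical evaluation morphism, so $T$ is left Hopf; the right closed case follows by the same symmetry.

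The module-axiom and coherence verifications in (I) are routine but lengthy. The genuinely delicate step is the converse direction in (II) and (III): extracting an explicit inverse of the fusion operator from rigidity, respectively closedness, of $\ct{C}_{T}$, since one must check that the candidate built from duality or inner-hom data is natural and cancels $H^{l}$ on both sides. Since the theorem is only recalled here and used as a black box in the sequel, in practice I would simply cite \cite{moerdijk2002monads} for (I) and \cite{bruguieres2011hopf} (together with \cite{bruguieres2007hopf} for the rigid case) for (II) and (III).
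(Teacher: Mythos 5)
The paper states this theorem purely as a recollection from the literature and offers no proof of its own, deferring entirely to \cite{moerdijk2002monads}, \cite{bruguieres2007hopf} and \cite{bruguieres2011hopf}; your outline of the standard constructions (the lifted tensor $(r\tn s)T_{2}(X,Y)$, the dual/inner-hom actions built from the inverse fusion operators) is accurate and you ultimately cite the same sources. So the proposal is correct and matches the paper's treatment.
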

\subsection{Braided Hopf Algebras and Augmented Hopf Monads}
An \emph{algebra} or \emph{monoid} in a monoidal category $\ct{C}$ consists of a triple $(M,\mu ,\eta)$, where $M$ is an object of $\ct{C}$ and $\mu : M\otimes M\rightarrow M$ and $\eta :1_{\otimes} \rightarrow M$ are morphisms in $\ct{C}$ satisfying $\mu (\mathrm{id}_{M}\otimes \eta ) =\mathrm{id}_{M}= \mu (\eta\otimes \mathrm{id}_{M} )$ and $\mu (\mathrm{id}_{M}\otimes\mu )= \mu ( \mu\otimes \mathrm{id}_{M})$. A \emph{coalgebra} or \emph{comonoid} in $\ct{C}$ can be defined by simply reversing the morphisms in the definition of a monoid. Observe that monoid structures on an object $A$ in a monoidal category $\mathcal{C}$, correspond directly to monad structures on the endofunctor $A\otimes -$. The Eilenberg-Moore category $\mathcal{C}_{T}$ in this case is the category of left modules over $A$ i.e. objects $X$ with an action $r: A\otimes X\rightarrow  X$. 

A \emph{central bialgebra} in $\mathcal{C}$ consists of an object $(B,\tau)$ in $Z(\ct{C})$, and morphisms $m, \eta, \Delta, \epsilon$ such that $((B,\tau),m,\eta)$ is a monoid in $Z(\ct{C})$, $((B,\tau),\Delta,\epsilon)$ is a comonoid in $Z(\ct{C})$ and $(m\otimes m)(B\otimes\tau_{B} \otimes B)(\Delta\otimes\Delta)=\Delta m $ and $ \epsilon m = (\epsilon\otimes\epsilon) $ hold. Hence, the monad $T=B\tn -$ has a bimonad structure, with $T_{2}=(B\tn \tau \tn \mathrm{id}_{\ct{C}})( \Delta\tn \mathrm{id}_{\ct{C}}\tn\mathrm{id}_{\ct{C}})$ and $T_{0}=\epsilon$. A central bialgebra is called a \emph{central Hopf algebra} if there exists a morphism $S:(B,\tau)\rightarrow (B,\tau)$ such that $m(B\otimes S)\Delta =\eta\epsilon = m(S\otimes B)\Delta $ and the mentioned bimonad $B\tn -$ is left Hopf in this case and Hopf if $S$ is also invertible. A \emph{braided Hopf algebra} in a braided monoidal category $(\ct{C},\Psi) $ is just a central Hopf algebra $(H,\tau)$ where $\tau_{-}=\Psi_{H,-}$. We recover the usual notion of Hopf algebras as braided Hopf algebras in the braided monoidal category of vectorspaces. 

A Hopf monad $(T,\mu ,\eta , T_{2}, T_{0})$ on a monoidal category $\ct{C}$  is said to be \emph{augmented} if there exists a bimonad morphism $\xi : T\rightarrow \mathrm{id}_{\ct{C}}$, where the identity functor $\mathrm{id}_{\ct{C}}$ has trivial Hopf monad structure. For $\xi$ to be a bimonad morphism, $\xi \eta =\id  $, $\xi\mu = \xi T(\xi)$, $(\xi \tn \xi )T_{2}=\xi _{-\tn - }$ and $ \xi_{\un}= T_{0}$ must hold.
\begin{thm}\label{ThmAug} [Theorem 5.7 \cite{bruguieres2011hopf}] There is an equivalence of categories between the category of Hopf algebras in the center of $\mathcal{C}$ and augmented Hopf monads on $\mathcal{C}$.
\end{thm}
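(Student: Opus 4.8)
The plan is to build mutually quasi-inverse functors between the two categories. In one direction, let $\Phi$ send a central Hopf algebra $B$ with half-braiding $\tau$, multiplication $m$, unit $u$, comultiplication $\Delta$, counit $\epsilon$ and antipode $S$ to the bimonad $T=B\tn-$ equipped with $\mu_{X}=m\tn\mathrm{id}_{X}$, $\eta_{X}=u\tn\mathrm{id}_{X}$, $T_{2}(X,Y)=(B\tn\tau_{X}\tn\mathrm{id}_{Y})(\Delta\tn\mathrm{id}_{X}\tn\mathrm{id}_{Y})$, $T_{0}=\epsilon$ as recalled just before the statement. First I would check that $\xi_{X}:=\epsilon\tn\mathrm{id}_{X}\colon T(X)\to X$ is a bimonad morphism $T\to\id$: unwinding the four conditions $\xi\eta=\id$, $\xi\mu=\xi T(\xi)$, $(\xi\tn\xi)T_{2}=\xi_{-\tn-}$ and $\xi_{\un_{\tn}}=T_{0}$ reduces them to the (co)unit axioms for $\epsilon$, to $\epsilon$ being an algebra map, and to $\epsilon$ being a morphism in $Z(\ct{C})$, so that the half-braiding is annihilated by $\epsilon\tn\mathrm{id}$. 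Then I would exhibit two-sided inverses of $H^{l}_{X,Y}$ and $H^{r}_{X,Y}$ built from $\Delta$, $m$ and $S$ (respectively $S^{-1}$), the antipode axioms being exactly what makes these composites inverse, so that $\Phi(B)$ is a Hopf monad. Functoriality is routine: a morphism $f\colon B\to B'$ of central Hopf algebras induces the augmentation-preserving bimonad morphism $f\tn-$.

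For the reverse functor $\Psi$, send an augmented Hopf monad $(T,\mu,\eta,T_{2},T_{0},\xi)$ to $B:=T(\un_{\tn})$. The augmentation already supplies a strong monoidal section $R\colon\ct{C}\to\ct{C}_{T}$, $RX=(X,\xi_{X})$, of $U_{T}$. The crucial step is to prove that
$$\theta_{X}:=(\mathrm{id}_{T(\un_{\tn})}\tn\xi_{X})\,T_{2}(\un_{\tn},X)\colon T(X)\longrightarrow T(\un_{\tn})\tn X$$
is a natural isomorphism. Naturality of $T_{2}$ and the unit axiom $\mu T\eta=\mathrm{id}_{T}$ yield $T_{2}(\un_{\tn},X)=H^{l}_{\un_{\tn},X}\,T(\eta_{X})$, so the natural candidate for the inverse is $\vartheta_{X}:=T(\xi_{X})\,(H^{l}_{\un_{\tn},X})^{-1}\,(\mathrm{id}_{T(\un_{\tn})}\tn\eta_{X})$, and one checks $\vartheta\theta=\mathrm{id}$, $\theta\vartheta=\mathrm{id}$ by diagram chases using naturality of $H^{l}$ and the four augmentation identities. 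This is precisely where the Hopf hypothesis -- invertibility of the fusion operators -- is indispensable; dropping it leaves only a comparison morphism, and correspondingly only the weaker statement that augmented \emph{bi}monads correspond to central \emph{bi}algebras. Granting invertibility, I would transport all structure along $\theta$ to $B$: comonoidality of $T$ makes $B$ a coalgebra with $\Delta:=T_{2}(\un_{\tn},\un_{\tn})$, $\epsilon:=T_{0}$; the transported monad structure is of the form $m\tn\mathrm{id}$, $u\tn\mathrm{id}$ with $m:=\mu_{\un_{\tn}}\,\vartheta_{T(\un_{\tn})}$ and $u:=\eta_{\un_{\tn}}$; the half-braiding is $\tau_{X}:=(\xi_{X}\tn\mathrm{id}_{T(\un_{\tn})})\,T_{2}(X,\un_{\tn})\circ\vartheta_{X}$; and the antipode is $S:=(\xi_{\un_{\tn}}\tn\mathrm{id}_{T(\un_{\tn})})\,(H^{l}_{\un_{\tn},\un_{\tn}})^{-1}\,(\mathrm{id}_{T(\un_{\tn})}\tn\eta_{\un_{\tn}})$.

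It then remains to verify a list of compatibilities, each a large commutative diagram assembled from the bimonad axioms, naturality of $T_{2}$, $H^{l}$, $H^{r}$, and the augmentation identities, in the spirit of the diagrams gathered in Section \ref{SDiag}: that $\tau$ is a natural isomorphism with $\tau_{\un_{\tn}}=\mathrm{id}$ obeying the hexagon, so that $(B,\tau)\in Z(\ct{C})$; that $m,u,\Delta,\epsilon$ are morphisms in $Z(\ct{C})$ making $(B,\tau)$ a central bialgebra; and that $S$ satisfies the antipode axioms. Functoriality of $\Psi$ is immediate since an augmentation-preserving bimonad morphism restricts at $\un_{\tn}$ to a morphism of central Hopf algebras. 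Finally I would check that the two composites are naturally isomorphic to the identities: $\Psi\Phi\cong\mathrm{Id}$ is direct, because for $T=B\tn-$ one has $T(\un_{\tn})=B$, $\theta=\vartheta=\mathrm{id}$, and every transported structure map is the original one; and $\Phi\Psi\cong\mathrm{Id}$ is witnessed by $\vartheta\colon\Phi\Psi(T)=T(\un_{\tn})\tn-\xrightarrow{\ \sim\ }T$, for which one verifies that it is simultaneously an isomorphism of monads, a comonoidal isomorphism, and compatible with the augmentations. I expect the main obstacle to be exactly the invertibility of $\theta$ together with the bookkeeping needed to confirm that $\vartheta$ respects all of this structure; the rest is a lengthy but mechanical diagram chase.
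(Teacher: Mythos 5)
The paper does not actually prove this statement: it is quoted as Theorem 5.7 of \cite{bruguieres2011hopf}, the reader is referred there for the proof, and the paper only records the resulting structure maps on $T(\un)$ in the paragraph that follows. Your sketch reconstructs essentially that same construction — your $\vartheta_{X}$ is the paper's $\mathfrak{u}_{X}$, your $m=\mu_{\un}\vartheta_{T(\un)}$, half-braiding $\tau$, and inverse antipode built from $H^{r}$ all agree with the formulas recorded there, and the two quasi-inverse functors you describe are the ones used in the cited proof, with the invertibility of the fusion operators correctly identified as the point where the Hopf hypothesis enters. One small slip: the codomain of $(H^{l}_{\un,\un})^{-1}$ is $T(\un\tn T(\un))=T(T(\un))$, not a tensor product, so the antipode must be $S=\xi_{T(\un)}(H^{l}_{\un,\un})^{-1}(T(\un)\tn \eta_{\un})$ as the paper records, rather than $(\xi_{\un}\tn T(\un))(H^{l}_{\un,\un})^{-1}(T(\un)\tn \eta_{\un})$, which does not typecheck.
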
 
An augmentation on a Hopf monad, provides $T(\un )$ with a central Hopf algebra structure and the Hopf monad $T$ in this case is shown to be isomorphic to the induced Hopf monad of $T(\un )$.  In particular, $(T(\un),T_{2}(\un ,\un), T_{0})$ forms a comonoid in $\ct{C}$, while $T(\un, \mu_{\un} \mathfrak{u}_{T(\un)},\eta_{\un})$ forms a monoid in $\ct{C}$, where $\mathfrak{u}_{X}= T(\xi_{X})(H^{l}_{\un,X})^{-1} (T(\un)\tn \eta_{X})$, and $\tau =(\xi\tn T(\un) )T_{2}(T(X),\un) \mathfrak{u}_{X}$ defines a braiding, which makes $(T(\un),\tau )$ a central Hopf algebra with its invertible antipode defined by $S =  \xi_{T(\un)}(H^{l}_{\un,\un})^{-1} (T(\un)\tn \eta_{\un})$ and $S^{-1} =  \xi_{T(\un)}(H^{r}_{\un,\un})^{-1} (\eta_{\un}\tn T(\un))$. We refer the reader to \cite{majid158algebras} for more details on braided Hopf algebras and \cite{bruguieres2011hopf} for the more details on augmented Hopf monads and the proof of Theorem \ref{ThmAug}. 

\section{Pivotal Objects and Pivotal Cover}\label{SPivOb}
In this section, we define the notion of pivotal objects, pairs and pivotal cover for arbitrary monoidal categories. Let $(\ct{C},\tn , \un )$ be a monoidal category. 

\begin{lemma}\label{LPiv} If $P$ is an object of $\ct{C}$, then the following statements are equivalent:
\begin{enumerate}[(I)]
\item The object $P$ is dualizable and there exists an isomorphism $\pr{P}\cong P^{\vee}$.
\item There exists an object $Q$ and morphisms $\cvl :\un\rightarrow P\tn Q$, $\evl : Q\tn P \rightarrow \un$ and $\cvr :\un\rightarrow Q\tn P$, $\evr : P\tn Q \rightarrow \un$, making $Q$ a left and right dual of $P$, respectively.
\item Left duals $\pr{P}$ and $\prescript{\vee\vee}{}{P}$ exist and there exists an isomorphism $P\cong \prescript{\vee\vee}{}{P}$. 
\end{enumerate}
We say $P$ is a \emph{pivotal object} if it satisfies any of the above statements and refer to an ordered pair $(P,Q)$, as in part (II), as a \emph{pivotal pair}.
\end{lemma}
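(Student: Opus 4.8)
The plan is to prove the cycle of implications $(I)\Rightarrow(II)\Rightarrow(III)\Rightarrow(I)$, using throughout the basic fact recalled in Section \ref{SRigClos} that duals are unique up to isomorphism and that an isomorphism of objects transports duality morphisms. The underlying idea is simply that "left dual of $P$" and "right dual of $P$" are each determined up to unique compatible isomorphism, so an isomorphism between them lets us replace one set of duality data with another on the \emph{same} object.

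First, for $(I)\Rightarrow(II)$: assume $P$ is dualizable, fix a left dual $\pr{P}$ with $\evl_{P},\cvl_{P}$ (so $\pr{P}$ is a left dual of $P$, equivalently $P$ is a right dual of $\pr{P}$), fix a right dual $P^{\vee}$ with $\evr_{P},\cvr_{P}$, and let $\phi:\pr{P}\rightarrow P^{\vee}$ be the given isomorphism. Set $Q:=\pr{P}$. Then $Q$ is already a left dual of $P$, giving the maps $\cvl:\un\rightarrow P\tn Q$ and $\evl:Q\tn P\rightarrow\un$. For the right-dual data, transport the duality morphisms of $P^{\vee}$ along $\phi^{-1}$: define $\cvr:=(Q\tn\mathrm{id}_P)(\phi^{-1}\tn\mathrm{id}_P)\cvr_P:\un\rightarrow Q\tn P$ and $\evr:=\evr_P(\mathrm{id}_P\tn\phi):P\tn Q\rightarrow\un$, i.e. precompose/postcompose with $\phi^{\pm1}$ appropriately. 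A short computation with the snake identities for $(P,P^{\vee})$ shows these satisfy the snake identities making $Q$ a right dual of $P$. This gives (II).

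Next, $(II)\Rightarrow(III)$: given $(P,Q)$ as in (II), $Q$ is a left dual of $P$, so we may take $\pr{P}:=Q$. But $Q$ is \emph{also} a right dual of $P$, which says precisely that $P$ is a left dual of $Q$; hence $P$ serves as a left dual of $Q=\pr{P}$, i.e. $P$ is (canonically, up to the chosen data) a model for $\prescript{\vee\vee}{}{P}=\pr{(\pr{P})}$. Since any two left duals of the same object are isomorphic, $\prescript{\vee\vee}{}{P}$ exists and $P\cong\prescript{\vee\vee}{}{P}$, which is (III). Finally, $(III)\Rightarrow(I)$: by hypothesis $\pr{P}$ exists, so $P$ has a left dual; and $\prescript{\vee\vee}{}{P}$ existing means $\pr{P}$ has a left dual, which is to say $\pr{P}$ is dualizable — but "$X$ is a left dual of $Y$" is symmetric to "$Y$ is a right dual of $X$", so the left dual $\prescript{\vee\vee}{}{P}$ of $\pr{P}$ has $\pr{P}$ as a right dual, equivalently $\prescript{\vee\vee}{}{P}$ is a right dual of... one must instead argue directly: having a left dual $\pr{P}$ of $P$ makes $P$ a right dual of $\pr{P}$; then an isomorphism $P\cong\prescript{\vee\vee}{}{P}$ together with $\prescript{\vee\vee}{}{P}$ being a left dual of $\pr{P}$ lets us transport the duality data to make $P$ itself simultaneously a left dual and a right dual of $\pr{P}$, so setting $X=\pr{P}$ we see $\pr{P}$ is dualizable with isomorphic left and right duals; relabeling ($P\leftrightarrow\pr{P}$) — or more cleanly, observing that (III) for $P$ is literally (I) applied to $\pr{P}$, and that (I) is symmetric under $P\mapsto\pr{P}$ once one knows $\pr{(\pr{P})}\cong P$ — yields (I) for $P$.

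The routine calculations are the diagram chases verifying that transported evaluation/coevaluation morphisms still satisfy the two snake (zig-zag) identities; these are standard and I would only sketch one of them. The main subtlety, and the step I would be most careful about, is bookkeeping the left/right conventions: the statement deliberately juggles "$Q$ is a left dual of $P$" versus "$P$ is a right dual of $Q$" versus "$\prescript{\vee\vee}{}{P}$", and the cleanest route is to fix once and for all the dictionary "$A$ is a left dual of $B$ $\iff$ $B$ is a right dual of $A$" (as the paper does) and phrase every implication as an application of it together with uniqueness of duals up to isomorphism. There is no deep obstacle — the content is entirely formal — but the proof is only convincing if the variance conventions are tracked without slippage.
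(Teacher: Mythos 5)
Your proposal is correct and follows essentially the same route as the paper: the cycle $(I)\Rightarrow(II)\Rightarrow(III)\Rightarrow(I)$, with each implication obtained by transporting evaluation/coevaluation data along the given isomorphism ($\cvr:=(f^{-1}\tn P)\cvr_P$, $\evr:=\evr_P(P\tn f)$ for $(I)\Rightarrow(II)$, the identification $P=\pr{Q}=\prescript{\vee\vee}{}{P}$ for $(II)\Rightarrow(III)$, and transport along $P\cong\prescript{\vee\vee}{}{P}$ for $(III)\Rightarrow(I)$). Your $(III)\Rightarrow(I)$ paragraph takes a detour before landing on the right argument, but its final form — use the isomorphism to make $\pr{P}$ a right dual of $P$, so $P^{\vee}\cong\pr{P}$ — is exactly the paper's one-line proof.
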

\begin{proof} (I)$\Rightarrow$(II) Assume $P$ is dualizable with $\pr{P}$ and $P^{\vee}$, its left and right dual objects and $\cvl,\evl$ and $\cvr,\evr$ as the respective coevaluation and evaluation morphisms and let $f:\pr{P}\rightarrow P^{\vee}$ be an isomorphism. Hence, $(f^{-1} \tn P)\cvr$ and $\evr(P\tn f)$ make $\pr{P}$ right dual to $P$ and $Q= \pr{P}$ and $\cvl, \evl,  (f^{-1} \tn P)\cvr, \evr (P\tn f)$ satisfy the conditions in (II). 

(II)$\Rightarrow$(III) By assumption $Q=\pr{P}$ and $P=\pr{Q}$, thereby $P=\prescript{\vee\vee}{}{P}$.

(III)$\Rightarrow$(I) Let $f:P\rightarrow \prescript{\vee\vee}{}{P}$ be an isomorphism and $\cvl_{\pr{P}}: \un\rightarrow \prescript{\vee\vee}{}{P}\tn \pr{P}$, $\evl_{\pr{P}}:\prescript{\vee\vee}{}{P}\tn \pr{P}\rightarrow \un$ be the relevant coevaluation and evaluation morphisms. Hence, $( \pr{P}\tn f)\cvl_{\pr{P}}$ and $(f^{-1}\tn \pr{P})\evl_{\pr{P}}$ make $\pr{P}$ right dual to $P$, and $\pr{P}=P^{\vee}$.\end{proof}

Note that the coevaluation and evaluation morphisms making $Q$ a left and right dual of $P$ are part of the data for a pivotal pair $(P,Q)$. Additionally, $Q$ is also pivotal object by definition and $(Q,P)$ a pivotal pair with the duality morphisms swapped. Moreover, note that strong monoidal functors preserve pivotal objects, since they preserve duals and isomorphisms. In particular, a strong monoidal functor $F:\ct{C}\rightarrow \ct{D}$ sends a pivotal pair $(P,Q)$ in $\ct{C}$ to a pivotal pair $(F(P),F(Q))$ with the natural duality morphisms presented in Section \ref{SRigClos}.

We now review some examples of pivotal objects in monoidal categories. 
\begin{ex}\label{EBrai} Any dualizable object $P$ in a braided monoidal category $(\ct{B}, \Psi )$ is pivotal. Let $\pr{P}$ and $P^{\vee}$ be the left and right duals of $P$ with the coevaluation and evaluation maps, $\cvl, \evl $ and $\cvr , \evr$, respectively. In this case 
\begin{align}
\mathfrak{lr}_{P}:= (P^{\vee}\tn \evl \Psi_{P, \pr{P}})(\cvr\tn\pr{P}): \pr{P}\rightarrow P^{\vee}
\\  \mathfrak{rl}_{P}:=(\pr{P}\tn \evr)(\Psi^{-1}_{P,\pr{P}}\cvl\tn P^{\vee}):P^{\vee}\rightarrow \pr{P}
\end{align} 
are inverses and provide an isomorphism between $\pr{P}$ and $P^{\vee}$.
\end{ex}

\begin{ex}\label{Ebim} For a $\field$-algebra $A$, the category of $A$-bimodules, denoted by $\bim$ has a monoidal structure by tensoring bimodules over $A$. We say an $A$-bimodule is \emph{pivotal} if it is a pivotal object in $\bim$. In Section 4.2 of \cite{ghobadi2020hopf}, we provided a range of examples of first order differential calculi $(A, \Omega^{1}, d:A\rightarrow \Omega^{1})$, where $\Omega^{1}$ was a pivotal bimodule over $A$, the simplest examples being finitely generated free bimodules. Our examples also included Hopf bimodules over Hopf algebras. If $A$ is a Hopf algebra with an invertible antipode, the category of Hopf bimodules has braided monoidal structure and thereby any dualizable object in this category is pivotal. The forgetful functor from the category of Hopf bimodules to the category of bimodules over the Hopf algebra is strong monoidal. Hence as a bimodule, any dualizable Hopf bimodule is pivotal. 
\end{ex}

\begin{ex}\label{EEndC} For any category $\ct{C}$, the category of endofunctors on $\ct{C}$, denoted by $\mathrm{End}(\ct{C})$, has a monoidal structure via composition of functors i.e. $F\tn G = FG$ for $F,G\in \mathrm{End}(\ct{C})$ and the identity functor $\id $ acting as the monoidal unit. In this case, a functor $F$ being left (right) dual to $G$, is exactly equivalent to $F$ being left (right) adjoint to $G$. An endofunctor is thereby called pivotal if it has left and right adjoint functors which are isomorphic. Such adjunctions are referred to as \emph{ambidextrous} and the monad $GF$ on $\ct{C}$ is called a \emph{Frobenius} monad [Theorem 17 \cite{lauda2006frobenius}]. 
\end{ex}
A rigid monoidal category $\ct{C}$ is called \emph{pivotal} (sometimes called \emph{sovereign}) if there exists a natural isomorphism $ \varrho : \mathrm{id}_{ \ct{C}}\rightarrow \prescript{\vee \vee}{}{(-)} $. Equivalently, the condition is sometimes stated as the existence of a monoidal natural isomorphism $ \varrho^{\vee} : (-)^{\vee }\rightarrow \pr{(-)} $. In \cite{shimizu2015pivotal}, the pivotal cover of a rigid monoidal category was introduced by K. Shimizu. Independently, we discovered this notion for general monoidal categories, by encountering the notion of \emph{pivotal morphisms} between pivotal objects in \cite{ghobadi2020hopf}.
\begin{defi}\label{DPivMo} Let $\ct{C}$ be a monoidal category, and $(P_{1}, Q_{1})$ and $(P_{2}, Q_{2})$ pivotal pairs in $\ct{C}$ with $\cvl_{i}, \evl_{i}, \cvr_{i}, \evr_{i}$ being the relevant coevaluation and evaluation morphisms. We say a morphism $f: P_{1}\rightarrow P_{2}$ is \emph{pivotal} if 
\begin{equation}\label{EqPivMo}
(\evl_{2}\tn Q_{1} ) ( Q_{2} \tn f \tn Q_{1} ) (Q_{2}\tn \cvl_{1}) = (Q_{1}\tn \evr_{2} ) (Q_{1}\tn f \tn Q_{2} ) (\cvr_{1}\tn Q_{2} )
\end{equation}
as morphisms from $Q_{2}$ to $Q_{1}$. The \emph{pivotal cover} of $\ct{C}$, denoted by $\ct{C}^{piv}$, has ordered pivotal pairs $(P,Q)$, in $\ct{C}$, as objects and pivotal morphisms of $\ct{C}$ as morphisms. 
\end{defi}
There is a subtlety which we must point out. Namely, that a morphisms $f:P_{1}\rightarrow P_{2}$ being pivotal does really depend on the choice of $Q_{1}$ and $Q_{2}$. Even for a single pivotal object $P$ and two $Q_{1}$ and $Q_{2}$ left and right duals of $P$ with $\cvl_{i}, \evl_{i}, \cvr'_{i}, \evr'_{i}$ denoting the relevant coevaluation and evaluation morphisms, the identity morphism $\mathrm{id}_{P}$ is not necessarily a morphism between pivotal pairs $(P,Q_{1})$ and $(P,Q_{2})$. There exist isomorphisms $h^{l}: Q_{1}\rightarrow Q_{2}$ and $h^{r}: Q_{1}\rightarrow  Q_{2}$ defined by $h^{l}=(\evl_{2}\tn Q_{1} )( Q_{2}\tn \cvl_{1})$ and $h^{r}=( Q_{1} \tn \evr_{2} ) (\cvr_{1}\tn Q_{2} )$. The identity morphism being pivotal in terms $Q_{1}$ and $Q_{2}$ is exactly equivalent to $h^{l}=h^{r}$, which is not necessarily true (Example \ref{EHopfMany}). However, the identity morphism is clearly a pivotal morphism from $(P, Q_{1} )$ to itself. It should also be clear that $\ct{C}^{piv}$ is well-defined and pivotal morphisms are closed under composition. 

In the terminology of rigid monoidal categories, if $Q= \pr{P}= P^{\vee}$ is the chosen left and right dual of $P$, the left hand morphism in \ref{EqPivMo} is exactly $\pr{f}$ and the right hand morphism, $ f^{\vee}$, and we call a morphism $f$ between objects pivotal, if $\pr{f}=f^{\vee}$. 

In Section 5.2 of \cite{ghobadi2020hopf}, we presented two families of examples of pivotal morphisms. In \cite{ghobadi2020hopf}, we were interested in differential graded algebras $(\oplus_{i\geq 0} \Omega^{i},d,\wedge ) $, with $\wedge$ denoting the graded multiplication, where $\Omega^{1}$ and $\Omega^{2}$ were pivotal bimodules over the algebra $(\Omega_{0}, \wedge)$, and $\wedge : \Omega^{1}\tn \Omega^{1}\rightarrow \Omega^{2}$ was a pivotal morphism. One of the examples presented in \cite{ghobadi2020hopf} was that of Woronowicz's bicovariant algebras \cite{woronowicz1989differential} over the group algebra, for any arbitrary group. The Woronowicz construction extends any Hopf module calculus over a Hopf algebra to a DGA of Hopf bimodules and has been generalised to the language of braided abelian categories by Majid, Section 2.6 \cite{beggs2019quantum}, in the name of \emph{braided exterior algebras}. In the following examples we show the mentioned morphism is always pivotal for any braided exterior algebra.

\begin{ex}\label{EPivMo}[Braided Exterior Algebra] Let $(\ct{C},\tn,\un )$ be a braided monoidal category and $\Psi$ denote its braiding. Recall from Example \ref{EBrai} that any dualizable object in $\ct{C}$ is pivotal. Let $P_{1}$ and $P_{2}$ be dualizable objects in $\ct{C}$ such that $\pi :P_{1}\tn P_{1}\rightarrow P_{2}$ is the coequalizer of the parallel pair $ \mathrm{id}_{P_{1}\tn P_{1}},\Psi_{P_{1},P_{1}} : P_{1}\tn P_{1} \rightrightarrows P_{1}\tn P_{1} $. We claim that $\pi$ is a pivotal morphism. As in Example \ref{EBrai}, $\pr{P_{1}}$ and $\pr{P}_{2}$ denote left duals of $P_{1}$ and $P_{2}$, respectively, which become right duals by the isomorphism provided, so that $\pr{P_{1}}\tn \pr{P_{1}}$ also becomes a left and right dual of $P_{1}\tn P_{1}$. Hence, writing the pivotal condition, \ref{EqPivMo}, for $\wedge$ in terms of $\pr{P_{1}}\tn \pr{P_{1}}$ and $\pr{P_{2}}$, reduces to checking if the morphisms 
\begin{align*}
&(\evl_{2}(\pr{P_{1}})(\pr{P_{1}} )) ( \pr{P_{2}}  \wedge (\pr{P_{1}})( \pr{P_{1}} )) \big(\pr{P_{2}}(P_{1}  \cvl_{1} \pr{P_{1}})(\cvl_{1} ) \big), 
\\ &(\pr{P_{1}}\pr{P_{1}}\evr_{2} ) ((\mathfrak{rl}_{P_{1}}\tn \mathfrak{rl}_{P_{1}}\tn \wedge )  {P_{2}}^{\vee}) \big((P_{1}^{\vee}  \cvr P_{1})(\cvr )\tn \mathfrak{lr}_{P_{2}} \big)
\end{align*}
are equal. By the definition of $\mathfrak{rl}$ and $\mathfrak{lr}$ and the properties of the braiding, the second morphism simplifies as follows
\begin{align*}
(\pr{P_{1}} &\pr{P_{1}} \evl_{2} \Psi_{P_{2}, \pr{P_{2}}})((\pr{P_{1}}) (\pr{P_{1}}) \wedge  (\pr{P_{2}})) (\pr{P_{1}}  \Psi^{-1}_{P_{1},\pr{P_{1}}}\cvl_{1}  P_{1}(\pr{P_{2}}))
\\ &(\Psi^{-1}_{P_{1},\pr{P_{1}}}\cvl_{1} \pr{P_{2}} )
\\ =&(\evl_{2} (\pr{P_{1}})( \pr{P_{1}}))(\pr{P_{2}} \Psi_{\pr{P_{1}} \tn \pr{P_{1}}, P_{2}} )  (( \pr{P_{2}})( \pr{P_{1}}) (\pr{P_{1}} )\wedge ) 
\\ &\big(\pr{P_{2}}(\pr{P_{1}}  \Psi^{-1}_{P_{1},\pr{P_{1}}}\cvl_{1}  P_{1})(\Psi^{-1}_{P_{1},\pr{P_{1}}}\cvl_{1} ) \big)
\\ = &(\evl_{2} (\pr{P_{1}})( \pr{P_{1}}))(\pr{P_{2}} \Psi_{\pr{P_{1}} , P_{2}} (\pr{P_{1}})) (\pr{P_{2}} (\pr{P_{1}})\wedge\Psi^{-1}_{P_{1},P_{1}} (\pr{P_{1}}))
\\& \big(\pr{P_{2}}( \Psi^{-1}_{P_{1},\pr{P_{1}}}\cvl_{1} \tn \cvl_{1} )\big)
\\ = & (\evl_{2}(\pr{P_{1}})(\pr{P_{1}} ))(\pr{P_{2}} \wedge\Psi_{P_{1},P_{1}}^{-1} \Psi_{P_{1},P_{1}}^{-1} (\pr{P_{1}})(\pr{P_{1}} ))( \pr{P_{2}} (P_{1}\cvl_{1} \pr{P_{1}} )\cvl_{1}) 
\\=&(\evl_{2}(\pr{P_{1}})(\pr{P_{1}} )) ( \pr{P_{2}}  \wedge (\pr{P_{1}})( \pr{P_{1}} )) \big(\pr{P_{2}}(P_{1}  \cvl_{1} \pr{P_{1}})(\cvl_{1} ) \big)
\end{align*}
The calculations above are much more clear from a pictorial point of view and readers who are familiar with the graphical calculus of braided monoidal categories, should draw the said morphisms for a quicker proof. We refer the reader to Chapter 2 of \cite{turaev2017monoidal} and Section 2.6 of \cite{beggs2019quantum}, for more details on graphical calculus. 
\end{ex}

We can define a natural monoidal structure on $\ct{C}^{piv}$ by $(P_{1},Q_{1})\tn (P_{2},Q_{2}) = (P_{1}\tn P_{2}, Q_{2}\tn Q_{1})$, since the tensor of pivotal morphisms is again a pivotal morphism in this way. Thereby, $\ct{C}^{piv}$ lifts the monoidal structure of $\ct{C}$, so that the natural forgetful functor $H:\ct{C}^{piv}\rightarrow \ct{C} $, which sends a pivotal pair $(P,Q)$ to $P$, is strict monoidal. Furthermore, notice that $\ct{C}^{piv}$ is also rigid and admits left and right duality functors $\pr{(-)}= (-)^{\vee} : \ct{C}^{piv} \rightarrow \ct{C}^{piv} $, which are defined by $(P,Q)^{\vee}=(Q,P)$ and $f^{\vee} = (\evl_{2}\tn Q_{1} ) ( Q_{2} \tn f \tn Q_{1} ) (Q_{2}\tn \cvl_{1}) $ for $f: (P_{1},Q_{1})\rightarrow (P_{2},Q_{2})$, as in Definition \ref{DPivMo}. By our definition of pivotal morphisms, it should be clear that the trivial identity morphism forms an isomorphism between $\pr{(-)}$ and $(-)^{\vee}$ and the category $\ct{C}^{piv}$ is trivially pivotal. 

In order to discuss the universal property of the pivotal cover, we recall the definition of a \emph{pivotal functor} from \cite{shimizu2015pivotal}. If $\ct{C}$ and $\ct{D}$ are rigid monoidal categories and $F: \ct{C}\rightarrow \ct{D}$ is a strong monoidal functor, then we have a natural family of unique isomorphisms $\zeta  : F(\pr{-}) \rightarrow \pr{F(-)}$ defined by 
\begin{align*}
\zeta_{X}&=(F_{0}F(\evl)F_{2}(\pr{X},X) \tn \pr{F(X)}) (F(\pr{X})\tn\cvl_{F(X)})
\\ \zeta_{X}^{-1} &= (\evl_{F(X)}\tn F(\pr{X}) )(\pr{F(X)}\tn F^{-1}_{2}(X,\pr{X})  F(\cvl_{X}) F_{0}^{-1}) 
\end{align*}
where $X$ is an object of $\ct{C}$. If $\ct{C}$ and $\ct{D}$ are pivotal categories with pivotal structures $\varrho^{\ct{C}}  : \mathrm{id}_{ \ct{C}}\rightarrow \prescript{\vee \vee}{}{(-)} $ and $\varrho^{\ct{D}} : \mathrm{id}_{ \ct{C}}\rightarrow \prescript{\vee \vee}{}{(-)} $, we say $F$ \emph{preserves the pivotal structure} if 
\begin{equation}\label{EqPivFun}
\varrho^{\ct{D}}_{F(X)} = \pr{(\zeta_{X}^{-1})} (\zeta_{\pr{X}} ) F( \varrho^{\ct{C}}_{X})
\end{equation}
holds for all objects $X$ of $\ct{C}$. 
\begin{thm}\label{TCPiv} The pivotal cover $\ct{C}^{piv}$ of $\ct{C}$ is pivotal and satisfies the following universal property: if $\ct{D}$ is a pivotal monoidal category and $G:\ct{D}\rightarrow \ct{C}$ a strong monoidal functor, then there exists a unique functor $G' : \ct{D}\rightarrow \ct{C}^{piv} $ so that $G=HG' $ and $G'$ respects the pivotal structures. 
\end{thm}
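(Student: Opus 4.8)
The plan is as follows. Recall from the discussion preceding the theorem that $\ct{C}^{piv}$ is rigid, with $\pr{(P,Q)}=(P,Q)^{\vee}=(Q,P)$, and that it carries the trivial pivotal structure $\varrho^{\ct{C}^{piv}}_{(P,Q)}=\mathrm{id}_{(P,Q)}$ (note $\prescript{\vee\vee}{}{(P,Q)}=(P,Q)$); throughout, $G'$ is understood to be strong monoidal, since this is implicit both in ``$G=HG'$'' and in the requirement that $G'$ respect the pivotal structures (which refers to \ref{EqPivFun}). It then remains to construct $G'$ and verify the universal property.

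For the construction, let $\pr{X}$ denote the chosen left dual of an object $X$ of $\ct{D}$, with duality morphisms $\cvl_{X},\evl_{X}$. I would set $G'(X):=(G(X),G(\pr{X}))$ and $G'(f):=G(f)$ on morphisms, and equip $G'(X)$ with the following duality data. Its left-dual duality morphisms $\widetilde{\evl}_{X},\widetilde{\cvl}_{X}$ are the canonical ones exhibiting $G(\pr{X})$ as a left dual of $G(X)$, built from the monoidal structure of $G$ as in Section \ref{SRigClos}. For the right-dual duality morphisms, I first use that in the pivotal category $\ct{D}$ the object $\pr{X}$ is also a right dual of $X$: since $\prescript{\vee\vee}{}{X}$ is a left dual of $\pr{X}$ and $\varrho^{\ct{D}}_{X}:X\to\prescript{\vee\vee}{}{X}$ is an isomorphism, the morphisms $\evl_{\pr{X}}(\varrho^{\ct{D}}_{X}\tn\pr{X}):X\tn\pr{X}\to\un$ and $(\pr{X}\tn(\varrho^{\ct{D}}_{X})^{-1})\cvl_{\pr{X}}:\un\to\pr{X}\tn X$ exhibit $\pr{X}$ as a right dual of $X$; applying $G$ to these (inserting the structure morphisms of $G$ as usual) yields morphisms $\widetilde{\evr}_{X}:G(X)\tn G(\pr{X})\to\un$ and $\widetilde{\cvr}_{X}:\un\to G(\pr{X})\tn G(X)$, which I take to be the right-dual duality morphisms of $G'(X)$.

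The verifications, carried out in this order, are: (i) $G'(X)$ is a pivotal pair — the triangle identities for $(\widetilde{\cvr}_{X},\widetilde{\evr}_{X})$ follow from those for the canonical left-dual data of $G(\pr{X})$ together with invertibility of $\varrho^{\ct{D}}_{X}$. (ii) $G'(f):G'(X)\to G'(Y)$ is a pivotal morphism, so that $G'$ is a well-defined (clearly functorial) functor into $\ct{C}^{piv}$: since $G$ is strong monoidal it carries the left dual of $f$ in $\ct{D}$ to the left-hand side of \ref{EqPivMo} for $G(f)$, which is thus $G(\pr{f})$; by the definition of $\widetilde{\evr},\widetilde{\cvr}$ and the naturality of $\varrho^{\ct{D}}$, the right-hand side of \ref{EqPivMo} for $G(f)$ is $G$ applied to the right dual of $f$ computed with the transported right-dual structure, and in a pivotal category this right dual equals $\pr{f}$ — essentially the naturality of $\varrho^{\ct{D}}$ — so both sides of \ref{EqPivMo} equal $G(\pr{f})$. (iii) $G'$ is strong monoidal with $G'_{2},G'_{0}$ having underlying morphisms $G_{2},G_{0}$ — one checks these are pivotal morphisms using monoidality of $\varrho^{\ct{D}}$ — whence $HG'=G$, since $H$ is strict monoidal and forgets second components. (iv) $G'$ respects the pivotal structures: because $\varrho^{\ct{C}^{piv}}$ is trivial and $HG'=G$, the canonical isomorphism $\zeta^{G'}_{X}:G'(\pr{X})\to\pr{G'(X)}$ has underlying morphism $\mathrm{id}_{G(\pr{X})}$ by a zig-zag identity; substituting into \ref{EqPivFun} and unwinding $\widetilde{\evr},\widetilde{\cvr}$, the required identity collapses, after one further zig-zag, to $G((\varrho^{\ct{D}}_{X})^{-1})\,G(\varrho^{\ct{D}}_{X})=\mathrm{id}_{G(X)}$, which holds. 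Finally, uniqueness: $H$ is faithful, so $G=HG'$ forces $G'$ on morphisms, and $G=HG'$ together with strong monoidality forces the first component of $G'(X)$ to be $G(X)$ and, via the canonical identification $G'(\pr{X})\cong\pr{G'(X)}$, its second component to be $G(\pr{X})$, after which compatibility with the pivotal structures pins down the remaining duality data.

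The main obstacle is not a single conceptual step but the bookkeeping running through (i)--(iv): on each pivotal pair $G'(X)$ one must keep the four families of duality morphisms straight and track the two a priori distinct notions of dual in the pivotal category $\ct{D}$ — the chosen left dual $\pr{X}$ and the right dual it becomes via $\varrho^{\ct{D}}_{X}$ — as well as the canonical isomorphisms $\zeta^{G'}$ appearing in \ref{EqPivFun}. I expect these computations to be cleanest in the graphical calculus of rigid monoidal categories, as with the calculations in Example \ref{EPivMo}.
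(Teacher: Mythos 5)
Your proposal follows essentially the same route as the paper's proof: the identical construction $G'(X)=(G(X),G(\pr{X}))$ with the canonical left-duality data and the right-duality data obtained by transporting the $\varrho^{\ct{D}}_{X}$-twisted morphisms through $G$, the same appeal to naturality of $\varrho^{\ct{D}}$ to see that $G(f)$ is a pivotal morphism, and the same $\zeta$-computation (collapsing via zig-zag identities) to verify \ref{EqPivFun}. Your explicit uniqueness argument is a small addition the paper leaves implicit, but it does not change the approach.
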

\begin{proof} Let the pivotal structure of $\ct{D}$ be denoted by the natural isomorphism $\varrho: \mathrm{id}_{ \ct{C}}\rightarrow \prescript{\vee \vee}{}{(-)} $. Since strong monoidal functors preserve duals, $G(X)$ for any object $X$ in $\ct{D}$ will be a pivotal object in $\ct{C}$. In particular, we define $G'(X)$ to be the pivotal pair $(G(X),G(\pr{X}))$ with coevaluation and evaluation morphisms 
\begin{align*}
\cvl^{G}_{X}&:= G_{2}^{-1}(X,\pr{X}) G(\cvl_{X}) G_{0}^{-1}, \hspace{0.4cm}  G_{2}^{-1}(\pr{X},X) G((\pr{X} \tn \varrho_{X
} ) \cvl_{\pr{X}} ) G_{0}^{-1} 
 \\ \evl^{G}_{X}&:=G_{0}G(\evl_{X})G_{2}(\pr{X},X),\hspace{1.2cm}  G_{0}G(\evl_{\pr{X}} (\varrho^{-1}_{X}\tn \pr{X} ) ) G_{2}( X,\pr{X}) 
\end{align*}
If $f:X_{1}\rightarrow X_{2}$ is a morphism in $\ct{D}$, then $G(f)$ is a pivotal morphism between $(G(X_{1}),G(\pr{X_{1}}))$ and $(G(X_{2}),G(\pr{X_{2}})) $, since $\varrho_{\pr{X}}^{-1} f^{\vee} \varrho_{\pr{X}} =\pr{f} $. Hence, by letting $G'(f)= G(f)$ we have defined a functor $G':\ct{D}\rightarrow \ct{C}^{piv}$ such that $HG'=G$. 

Additionally, $G'$ is a pivotal functor: if $\zeta  : G'(\pr{-}) \rightarrow \pr{G(-)}$ is the unique natural isomorphism as defined before the Theorem, then $G'(\pr{X})= (G(\pr{X}), G(\prescript{\vee \vee}{}{X}) ) $ and $\pr{G(X)}= (G(\pr{X}),G(X))$ with the appropriate duality morphisms as defined above. Hence,
\begin{align*}
& \pr{(\zeta_{X}^{-1})}(\zeta_{\pr{X}} ) G'( \varrho_{X}) =\pr{(\zeta_{X}^{-1})} (\evl^{G}_{\pr{X}} \tn \pr{G'(X)}) (G'( \varrho_{X})\tn\cvl_{G'(\pr{X})})
\\&= (\evl_{G'(\pr{X})} \tn \prescript{\vee \vee}{}{G'(X)} )(\pr{G'(X)}\tn \zeta_{X}^{-1}\tn \prescript{\vee \vee}{}{G'(X)})(\pr{G'(X)}\tn \cvl_{\pr{G'(X)}} )
\\ &\quad (\evl^{G}_{\pr{X}} \tn \pr{G'(X)}) (G'( \varrho_{X})\tn\cvl_{G'(\pr{X})})
\\&= (\evl_{G'(\pr{X})} \tn \prescript{\vee \vee}{}{G'(X)} )(\pr{G'(X)}\tn \evl_{G'(X)}\tn G'(\pr{X}) \tn \prescript{\vee \vee}{}{G'(X)} )
\\ &\quad (\pr{G'(X)}\tn\pr{G'(X)}\tn \cvl^{G}_{X} \tn \prescript{\vee \vee}{}{G'(X)})(\pr{G'(X)}\tn \cvr_{G'(X)} ) 
\\&\quad (\evl^{G}_{\pr{X}} \tn \pr{G'(X)}) (G'( \varrho_{X})\tn\cvl_{G'(\pr{X})})
\end{align*}
and by construction $\mathrm{id}_{(P,Q)}= \mathrm{P}$ for any pair $(P,Q)$ in $\ct{C}^{piv}$ and
\begin{align*}
&\pr{(\zeta_{X}^{-1})}(\zeta_{\pr{X}} ) G'( \varrho_{X}) = (\evl^{G}_{\pr{X}} \tn G(X) )(G(\pr{X})\tn \cvr_{G'(X)} ) (\evl^{G}_{\pr{X}} \tn G(\pr{X}))
\\&\quad  (G( \varrho_{X})\tn\cvl^{G}_{\pr{X}})
\\ &=(\evl^{G}_{\pr{X}} \tn G(X) ) (G(\pr{X})\tn G(X) \tn G( \varrho^{-1}_{X})) (G'(\pr{X})\tn \cvl^{G}_{\pr{X}} ) G( \varrho_{X})
\\ &=\mathrm{id}_{G(X)}= \mathrm{id}_{G'(X)}
\end{align*}
holds and thereby $G'$ is pivotal. 
\end{proof} 
In \cite{shimizu2015pivotal}, the pivotal cover of a left rigid monoidal category $\ct{C}^{piv}$ is constructed as the category of \emph{fixed objects} by the endofunctor $\prescript{\vee \vee}{}{(-)}:\ct{C}\rightarrow \ct{C}$. Constructing the pivotal cover as such has two main drawback, namely that we need to assume all objects in $\ct{C}$ have left duals and $\ct{C}$ is left rigid so that there is a distinguished choice of left dual for ever object. While we will not directly compare the constructions, the universal property above, is also proved in Theorem 4.3 of \cite{shimizu2015pivotal} and thereby the two constructions of $\ct{C}^{piv}$ are equivalent when $\ct{C}$ is left rigid.

\section{The Category $\ct{C}(P,Q)$}\label{SC(P)}
Given a pivotal pair $P$ and $Q$, as in Lemma \ref{LPiv} (II), we define the category of $P$ and $Q$ \emph{intertwined objects}, denoted by $\ct{C}(P,Q)$, as the category whose objects are pairs $(X,\sigma)$, where $X$ is an object of $\ct{C}$ and $\sigma: X\tn P \rightarrow P\tn X$ an invertible morphism in $\ct{C}$ such that 
\begin{align}
(\mathrm{ev}\otimes X\otimes Q)(Q\otimes\sigma\otimes Q)(Q\otimes X\otimes \mathrm{coev}): Q\otimes X\rightarrow X\otimes Q \label{Eqovsig1} &
\\ (Q\otimes X\otimes \evr )(Q\otimes\sigma^{-1}\otimes Q)(\cvr\otimes X\otimes Q): X\otimes Q \rightarrow Q\otimes X \label{Eqovsig2}&
\end{align}
are inverses. Morphisms between objects $(X,\sigma )$, $(Y,\tau)$ of $\ct{C}(P,Q)$ are morphisms $f:X\rightarrow Y$ in $\ct{C}$, which satisfy $\tau (f\tn P)= (P\tn f) \sigma$. For an object $(X,\sigma)$ in $\ct{C}(P,Q)$, we call $\sigma$ a $P$-\emph{intertwining} and denote the induced morphisms \ref{Eqovsig1} and \ref{Eqovsig2}, by $\ov{\sigma}$ and $\ov{\sigma}^{-1}$, respectively, and call them \emph{induced} $Q$\emph{-intertwinings}.

Observe that the definition of $\ct{C}(P,Q)$ is dependent on the choice of $Q$: let $P$ and $Q'$ together with $\cvl' :\un\rightarrow P\tn Q'$, $\evl' : Q'\tn P \rightarrow \un$ and $\cvr' :\un\rightarrow Q'\tn P$, $\evr' : P\tn Q' \rightarrow \un$ satisfy the conditions of Lemma \ref{LPiv} (II). Hence, we have two induced isomorphisms between $Q$ and $Q'$, $f = (\evl\tn Q')(Q \tn \cvl' )$ and $ f^{-1}= (\evl'\tn Q)(Q'\tn \cvl )$, and $g= (Q'\tn \evr)(\cvr'\tn Q)$ and $g^{-1}= (Q\tn \evr' )(\cvr\tn Q')$. Additionally, if for a $P$-intertwining $(X,\sigma)$, we denote the induced $Q$-intertwinings and induced $Q' $-intertwinings by $\ov{\sigma}_{Q}$, $\ov{\sigma}_{Q}^{-1}$ and $\ov{\sigma}_{Q'}$, $\ov{\sigma}_{Q'}^{-1}$, respectively, then $\ov{\sigma}_{Q'} = (g\tn X)\ov{\sigma}_{Q}(X\tn g^{-1})$ and $ \ov{\sigma}_{Q'}^{-1} = ( X\tn f) \ov{\sigma}_{Q}^{-1}(f^{-1}\tn X)$. Hence, $\ov{\sigma}_{Q}$ and $\ov{\sigma}_{Q}^{-1}$ being inverses is not equivalent to $\ov{\sigma}_{Q'}$ and $\ov{\sigma}_{Q'}^{-1}$ being inverses unless $f=g$. 

On the other hand, the category $\ct{C}(Q,P)$ is isomorphic to $\ct{C}(P,Q)$. The isomorphism sends an object $(X,\sigma )$ in $\ct{C}(P,Q)$ to $(X,\ov{\sigma}^{-1}) $ in $\ct{C}(Q,P)$. The $Q$-intertwining $\ov{\sigma}^{-1}$ is invertible and the induced $P$-intertwinings on $X$ in $\ct{C}(Q,P)$ are precisely $\sigma$ and $\sigma^{-1}$: 
\begin{align*}
\sigma^{-1}=&(\evr\otimes X\otimes P)(P\otimes\ov{\sigma}^{-1}\otimes P)(P\otimes X\otimes \cvr)
\\ \sigma  =&(P\otimes X\otimes \evl )(P\otimes\ov{\sigma}\otimes P)(\cvl\otimes X\otimes P)
\end{align*}
Note that the isomorphism described between $\ct{C}(Q,P)$ and $\ct{C}(P,Q)$ commutes with the forgetful functors from each category to $\ct{C}$.  

The monoidal structure of $\ct{C}$ lifts to $\ct{C}(P,Q)$ so that the forgetful functor $U:\ct{C}(P,Q)\rightarrow \ct{C}$ which sends a pair $(X,\sigma)$ to its underlying object $X$, becomes strict monoidal: for any pair of objects $(X,\sigma)$ and $(Y,\tau )$, the monoidal structure of $\ct{C}(P,Q)$, denoted by $\tn$ again, is defined by 
\begin{equation}\label{EqMonoidal}
(X,\sigma ) \tn (Y ,\tau ) =  \big(X\tn Y, (\sigma\tn Y)(X\tn \tau)  \big)
\end{equation}
and $(\un , \mathrm{id}_P)$ acts as the monoidal unit. Furthermore, $\tn$ is defined on pairs of morphisms of $\ct{C}(P,Q)$, as it is by $\tn$ in $\ct{C}$.

Our construction is very similar to that of the center of a monoidal category, and is an example of its generalisation, the dual of a strong monoidal functor \cite{majid1992braided}, which we will comment on in Section \ref{SDual}. 

\begin{thm}\label{TMon} The monoidal structure on $\ct{C}(P,Q)$, as described above, is well-defined.  
\end{thm}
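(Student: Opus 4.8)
The plan is to verify that the assignments in (\ref{EqMonoidal}) actually land in $\ct{C}(P,Q)$ and that $\tn$ so defined is a bifunctor compatible with the monoidal axioms inherited from $\ct{C}$. Concretely, there are four things to check: (i) for objects $(X,\sigma)$, $(Y,\tau)$ in $\ct{C}(P,Q)$, the morphism $(\sigma\tn Y)(X\tn\tau): X\tn Y\tn P\to P\tn X\tn Y$ is invertible and satisfies the defining condition, i.e.\ its two induced $Q$-intertwinings \ref{Eqovsig1} and \ref{Eqovsig2} are inverse to one another; (ii) the unit object $(\un,\mathrm{id}_P)$ really is an object of $\ct{C}(P,Q)$ and acts as a two-sided unit; (iii) if $f:(X,\sigma)\to(X',\sigma')$ and $g:(Y,\tau)\to(Y',\tau')$ are morphisms in $\ct{C}(P,Q)$, then $f\tn g$ intertwines $(\sigma\tn Y)(X\tn\tau)$ with $(\sigma'\tn Y')(X'\tn\tau')$; (iv) associativity and the unit coherence axioms hold, but these are immediate once we know $U$ is strict monoidal and faithful, since the underlying morphisms are exactly those of $\ct{C}$ (strictness means there is literally nothing to check beyond well-definedness).

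First I would dispatch the easy parts. For (iii), invertibility of $\sigma$ is not needed: one just composes the two intertwining identities $\sigma'(f\tn P)=(P\tn f)\sigma$ and $\tau'(g\tn P)=(P\tn g)\tau$ in the evident order, using functoriality of $\tn$ in $\ct{C}$, to get $(\sigma'\tn Y')(X'\tn\tau')\big((f\tn g)\tn P\big)=\big(P\tn(f\tn g)\big)(\sigma\tn Y)(X\tn\tau)$. Bifunctoriality of $\tn$ on $\ct{C}(P,Q)$ (preservation of identities and composition) is then inherited from $\ct{C}$ because $U$ is faithful. For (ii), $\mathrm{id}_P:\un\tn P\to P\tn\un$ is trivially invertible, and one checks directly from the duality axioms for $\cvl,\evl,\cvr,\evr$ (the snake identities) that both induced $Q$-intertwinings of $\mathrm{id}_P$ equal $\mathrm{id}_Q$, hence are mutually inverse; that $(\un,\mathrm{id}_P)\tn(X,\sigma)=(X,\sigma)=(X,\sigma)\tn(\un,\mathrm{id}_P)$ is then literal. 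Associativity $\big((X,\sigma)\tn(Y,\tau)\big)\tn(Z,\rho)=(X,\sigma)\tn\big((Y,\tau)\tn(Z,\rho)\big)$ reduces, after expanding (\ref{EqMonoidal}) twice on each side, to the identity $(\sigma\tn Y\tn Z)(X\tn\tau\tn Z)(X\tn Y\tn\rho)$ on both sides, using only the interchange law in $\ct{C}$.

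The substantive part is (i): showing the new $P$-intertwining on $X\tn Y$ is invertible and satisfies the $\ct{C}(P,Q)$-condition. Invertibility is clear since it is a composite of invertibles, with inverse $(X\tn\tau^{-1})(\sigma^{-1}\tn Y)$. The real content is the identity between the two induced $Q$-intertwinings. Here the natural approach is to first establish a ``multiplicativity'' lemma for the bar operation: the induced $Q$-intertwining of $(\sigma\tn Y)(X\tn\tau)$ should be expressible in terms of $\ov\sigma$ and $\ov\tau$ — I expect something of the shape $\overline{(\sigma\tn Y)(X\tn\tau)} = (\ov\tau$ on a $Q$-twisted slot$)\circ(\ov\sigma$ on the other$)$, obtained by inserting the defining zig-zag $(\evl\tn-\tn Q)(Q\tn-\tn Q)(Q\tn-\tn\cvl)$ and sliding the duality morphisms past $\sigma$ and $\tau$ using naturality and the snake identities. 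Doing the same for \ref{Eqovsig2} with $\sigma^{-1},\tau^{-1}$ and $\cvr,\evr$ gives a parallel formula for $\overline{(\cdots)}^{-1}$. Then the fact that $\ov\sigma\,\ov\sigma^{-1}=\mathrm{id}$, $\ov\sigma^{-1}\ov\sigma=\mathrm{id}$ and likewise for $\tau$ lets the two composite expressions telescope to the identity.

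The hard part will be precisely this bookkeeping in step (i): keeping track of which tensor factors the evaluation/coevaluation maps act on as one commutes them past $\sigma\tn Y$ and $X\tn\tau$, and verifying that the cancellations using $\ov\sigma\ov\sigma^{-1}=\mathrm{id}$ etc.\ occur in exactly the right slots. This is the kind of computation that is transparent in graphical calculus and tedious in linear notation; I would organize it as one or two displayed commutative diagrams (as the paper does elsewhere, cf.\ Section \ref{SDiag}), reducing everything to naturality squares and the two snake identities for the pivotal pair $(P,Q)$. No genuinely new idea is required beyond the observation, already flagged in the text around \ref{Eqovsig1} and \ref{Eqovsig2}, that the $\ct{C}(P,Q)$-condition is exactly what makes the bar operation interact well with $\tn$.
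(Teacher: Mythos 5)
Your proposal is correct and follows essentially the same route as the paper: the paper also treats the unit, morphisms, and coherence as immediate and concentrates on showing the induced $Q$-intertwinings of $(\sigma\tn Y)(X\tn\tau)$ are inverse, and its diagram chase passes through exactly the factorization $\ov{\sigma\tn\tau}=(X\tn\ov{\tau})(\ov{\sigma}\tn Y)$, $\ov{\sigma\tn\tau}^{-1}=(\ov{\sigma}^{-1}\tn Y)(X\tn\ov{\tau}^{-1})$ that your ``multiplicativity lemma'' predicts, after which the composite telescopes using $\ov{\sigma}\,\ov{\sigma}^{-1}=\mathrm{id}$ and $\ov{\tau}\,\ov{\tau}^{-1}=\mathrm{id}$. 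The only difference is organizational: you isolate the factorization as a lemma, while the paper performs the same snake-identity and naturality manipulations inline.
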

\begin{proof} The only non-trivial fact we need to check in our case is whether $(X,\tau ) \tn (Y ,\tau )$ is an object of $\ct{C}(P,Q)$. In particular, if $\sigma$ and $\tau$ are invertible, it should be clear that $ (\sigma\tn Y)(X\tn \tau) $ is also invertible, however, we need to prove that the induced $Q$-intertwinings, \ref{Eqovsig1} and \ref{Eqovsig2}, for $X\tn Y$ are inverses. This follows from the fact that the induced $Q$-intertwinings, \ref{Eqovsig1} and \ref{Eqovsig2} for $(X,\tau )$ and $ (Y ,\tau )$ are inverses:  
\begin{align*}
&\hspace{-0.5cm}(\ov{\sigma\tn\tau} )(\ov{\sigma\tn\tau}^{-1} )=(\mathrm{ev} XYQ)(Q\sigma\tn \tau Q)(QXY \mathrm{coev})  (QXY\evr )(Q(\sigma\tn \tau)^{-1}Q)
\\ & (\cvr XYQ)
\\ = &(\mathrm{ev} XYQ)(Q\sigma YQ) (QX \tau Q)(QXY \mathrm{coev})  (QXY\evr )(QX\tau^{-1}Q)(Q\sigma^{-1}YQ)
\\ & (\cvr XYQ)
\\ = &(\mathrm{ev} XYQ)(Q\sigma YQ) (QXP \evl YQ) (QX\cvl PYQ)(QX \tau Q)(QXY \mathrm{coev})  
\\ & (QXY\evr )(QX\tau^{-1}Q)(QX\evr YQ)(QXP\cvr YQ)(Q\sigma^{-1}YQ)(\cvr XYQ)
\\= & (X\evl YQ)(XQ\tau Q)(XQY\cvl)(\evl XQY)(Q\sigma QY)(QX\cvl Y)
\\ & (QX\evr Y)(Q\sigma^{-1}QY)(\cvr XQY)(XQY\evr )(XQ\tau^{-1}Q)(X\cvr YQ)
\\=& (X\evl YQ)(XQ\tau Q)(XQY\cvl)(XQY\evr )(XQ\tau^{-1}Q)(X\cvr YQ)= \mathrm{id}_{XYQ}
\end{align*}
In a symmetric manner, it follows that $(\ov{\sigma\tn\tau}^{-1} )(\ov{\sigma\tn\tau} )=\mathrm{id}_{QXY}$.\end{proof}
\begin{thm}\label{TCld} If $\ct{C}$ is a left closed monoidal category, then $\ct{C}(P,Q)$ has a left closed monoidal structure which lifts that of $\ct{C}$ and the forgetful functor $U$ is left closed.
\end{thm}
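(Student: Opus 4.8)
The plan is to construct the inner hom on $\ct{C}(P,Q)$ explicitly and verify that the adjunction $-\tn (Y,\tau) \dashv [(Y,\tau),-]^l$ holds with the underlying functor being the inner hom of $\ct{C}$. Given an object $(Y,\tau)$ of $\ct{C}(P,Q)$, we already have the left inner hom $[Y,-]^l$ on $\ct{C}$. The first task is to equip $[Y,Z]^l$ with a $P$-intertwining whenever $(Z,\rho)$ is an object of $\ct{C}(P,Q)$. Since $-\tn P$ and $-\tn Q$ are mutually adjoint (as $Q$ is a right dual of $P$, via $\cvl,\evl$) and also $P\tn -$, $Q\tn -$ are adjoint (as $Q$ is a left dual of $P$, via $\cvr,\evr$), we can transport the required morphism $[Y,Z]^l\tn P\rightarrow P\tn [Y,Z]^l$ through these adjunctions: concretely, the natural candidate is the mate, under $-\tn P \dashv -\tn Q$ followed by reassociating and using $\rho$ and $\tau^{-1}$, of an appropriate composite. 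Explicitly I would define the intertwining $\sigma^{[Y,Z]}$ so that on the level of the adjunction $(-\tn X)\dashv [X,-]$ it corresponds to the map built from $\rho$ on the $Z$-slot and $\ov{\tau}$ (the induced $Q$-intertwining on $Y$) on the $Y$-slot; the use of $\ov{\tau}$ rather than $\tau$ is forced because $Y$ appears contravariantly in $[Y,-]^l$, so a $P$-intertwining on $Y$ must be dualized into a $Q$-intertwining before it can be used.

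The second task is to check that $([Y,Z]^l,\sigma^{[Y,Z]})$ is genuinely an object of $\ct{C}(P,Q)$, i.e.\ that the two induced $Q$-intertwinings of $\sigma^{[Y,Z]}$ are mutually inverse. This is where the defining condition of $\ct{C}(P,Q)$ — as opposed to merely the ``$P$-intertwined objects'' category — gets used for both $(Y,\tau)$ and $(Z,\rho)$: the compatibility $\ov{\rho}\,\ov{\rho}^{-1}=\mathrm{id}$ and $\ov{\tau}\,\ov{\tau}^{-1}=\mathrm{id}$ feed into showing $\ov{\sigma^{[Y,Z]}}$ and its putative inverse cancel, in a computation structurally parallel to the one in the proof of Theorem \ref{TMon}. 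I expect this to be the main obstacle: it requires manipulating large composites involving all four duality morphisms $\cvl,\evl,\cvr,\evr$ together with the inner-hom evaluation/coevaluation, and the bookkeeping is exactly the sort that the paper defers to Section \ref{SDiag}. The key simplification is that everything can be checked after applying the (faithful, conservative on the relevant data) adjunction isomorphisms, reducing identities in $[Y,Z]^l$ to identities among morphisms into and out of tensor powers of $P$, $Q$, $Y$, $Z$.

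Next I would verify functoriality of $(Z,\rho)\mapsto ([Y,Z]^l,\sigma^{[Y,Z]})$ in both variables and that it lands in $\ct{C}(P,Q)$ (morphisms in $\ct{C}(P,Q)$ are sent to morphisms in $\ct{C}(P,Q)$ — this is a naturality check using that the original morphism intertwines $\rho$'s). Then the adjunction: the unit and counit of $-\tn Y \dashv [Y,-]^l$ in $\ct{C}$ must be shown to be morphisms in $\ct{C}(P,Q)$, i.e.\ to respect the intertwinings $\sigma^{[Y,Z]}$, $\sigma$, $\tau$; this is essentially immediate from the way $\sigma^{[Y,Z]}$ was defined as a mate, since mates of identities are counits/units. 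The triangle identities then hold automatically because they already hold in $\ct{C}$ and $U$ is faithful. Finally, that $U$ is a left closed functor is the statement that the canonical comparison $U[Y,Z]^l_{\ct{C}(P,Q)}\rightarrow [UY,UZ]^l_{\ct{C}}$ is an isomorphism — but by construction $U[(Y,\tau),(Z,\rho)]^l = [Y,Z]^l$ on the nose and the comparison map is the identity, so this is built in. The right-closed statement, Corollary \ref{CRCld}, will then follow by applying this theorem to $\ct{C}^{\op}$ together with the isomorphism $\ct{C}(P,Q)\cong\ct{C}(Q,P)$ noted before Theorem \ref{TMon}.
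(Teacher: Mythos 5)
Your proposal follows essentially the same route as the paper: you define the $P$-intertwining on $[Y,Z]^l$ by transporting through the inner-hom and duality adjunctions, using the covariant intertwining on the target slot and the dualized ($Q$-)intertwining on the contravariant source slot, then verify membership in $\ct{C}(P,Q)$ by (deferred) diagram chases, lift the unit and counit, and observe that $U$ is closed on the nose — which is exactly the paper's explicit $\langle\sigma_A,\sigma_B\rangle_l$ construction and its appeal to the commutative diagrams of Section \ref{SDiag}. The only caveat is that you should also record that $\sigma^{[Y,Z]}$ itself is invertible (the paper verifies both this and the inverse-$Q$-intertwining condition, four identities in total), but your "putative inverse" clearly anticipates this.
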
 
\begin{proof} Let $(A,\sigma_{A})$ and $(B,\sigma_{B})$ be objects in $\ct{C}(P,Q)$. If $\ct{C}$ is left closed, we denote the right adjoint functor to $-\tn A$, by $[A,-]^{l}$, and let $\eta^{A}_{-}: -\rightarrow [A,-\tn A]^{l} $ and $\epsilon^{A}_{-}: [A,-]^{l}\tn A\rightarrow -$ denote the unit and counit of this adjunction. To demonstrate that the left closed structure of $\ct{C}$ lifts to $\ct{C}(P,Q)$, we provide a functorial $P$-intertwining on $[A,B]^{l}$, and demonstrate that the unit and counit morphisms are morphisms in $\ct{C}(P,Q)$. We claim that $\langle \sigma_{A},\sigma_{B} \rangle_{l} $ as defined below is a $P$-intertwining with the described inverse:
\begin{align*}
 \langle \sigma_{A},\sigma_{B} \rangle_{l}  :=& (P[A,(\evl B)(Q\sigma_{B})(Q\epsilon^{A}_{B}P)(Q[A,B]^{l}\sigma^{-1}_{A})]^{l})(P\eta^{A}_{Q[A,B]^{l}P})
 \\&(\cvl[A,B]^{l}P)
\\\langle \sigma_{A},\sigma_{B} \rangle_{l}^{-1}  := &([A,(B\evr )(\sigma^{-1}_{B}Q)(P\epsilon^{A}_{B}Q)(P[A,B]^{l}\ov{\sigma_{A}})]^{l}P)(\eta^{A}_{P[A,B]^{l}Q}P)
\\&(P[A,B]^{l}\cvr)
\end{align*}
Demonstrating that $([A,B]^{l},\langle \sigma_{A},\sigma_{B} \rangle_{l})$ is an object of $\ct{C}(P,Q)$ requires showing that $\langle \sigma_{A},\sigma_{B} \rangle_{l} $ and $\langle \sigma_{A},\sigma_{B} \rangle_{l}^{-1} $ are inverses and that the induced $Q$-intertwinings given by 
\begin{align*}
 \ov{\langle \sigma_{A},\sigma_{B} \rangle_{l}}  =& ([A,(\evl B)(Q\sigma_{B})(Q\epsilon^{A}_{B}P)(Q[A,B]^{l}\sigma^{-1}_{A})]^{l}Q)(\eta^{A}_{Q[A,B]^{l}P}Q)
 \\&(Q[A,B]^{l}\cvl)
\\\ov{\langle \sigma_{A},\sigma_{B} \rangle_{l}}^{-1}  = &(Q[A,(B\evr )(\sigma^{-1}_{B}Q)(P\epsilon^{A}_{B}Q)(P[A,B]^{l}\ov{\sigma_{A}})]^{l})(Q\eta^{A}_{P[A,B]^{l}Q})
\\&(\cvr [A,B]^{l}Q)
\end{align*}
are inverses as well. These fact are not hard to show, but reduce to long diagram chases, for which we refer the reader to Section \ref{SDiag}. If $f: (B,\sigma_{B})\rightarrow (C,\sigma_{C})$ is a morphism in $\ct{C}(P,Q)$, then it follows by definition that $[A,f]^{l}$ is also a morphism in $\ct{C}(P,Q)$:
\begin{align*}
&(P[A,f]^{l})\langle \sigma_{A},\sigma_{B} \rangle_{l} = (P[A,f]^{l})(P[A,(\evl B)(Q\sigma_{B})(Q\epsilon^{A}_{B}P)(Q[A,B]^{l}\sigma^{-1}_{A})]^{l})
\\&(P\eta^{A}_{Q[A,B]^{l}P})(\cvl[A,B]^{l}P)
\\=& (P[A,f(\evl B)(Q\sigma_{B})(Q\epsilon^{A}_{B}P)(Q[A,B]^{l}\sigma^{-1}_{A})]^{l})(P\eta^{A}_{Q[A,B]^{l}P})(\cvl[A,B]^{l}P)
\\=& (P[A,(\evl C)(Q\sigma_{C})(QfP)(Q\epsilon^{A}_{B}P)(Q[A,B]^{l}\sigma^{-1}_{A})]^{l})(P\eta^{A}_{Q[A,B]^{l}P})(\cvl[A,B]^{l}P)
\\=& (P[A,(\evl C)(Q\sigma_{C})(Q\epsilon^{A}_{C}P)(Q[A,C]^{l}\sigma^{-1}_{A})(Q[A,f]^{l}PA)]^{l}) (P\eta^{A}_{Q[A,B]^{l}P})
\\&(\cvl[A,B]^{l}P)= \langle \sigma_{A},\sigma_{C} \rangle([A,f]^{l}P)
\end{align*}
Hence the assignment
\begin{align*}
[(A,\sigma_{A}), - ]^{l} :\ct{C}(&P)\longrightarrow \ct{C}(P,Q)
\\  (B,&\sigma_{B}) \mapsto  \big([A,B]^{l}, \langle \sigma_{A},\sigma_{B}\rangle \big) 
\end{align*}
is functorial by acting as $[A,-]^{l}$ on morphisms. In particular, the functor $[(A,\sigma_{A}),-]^{l}$ is a lift of $[A,-]^{l}$ via the forgetful functor $U$ so that $U[(A,\sigma_{A}),-]^{l}= [A,U(-)]^{l}$. Hence, it only remains to show that natural transformations $\eta^{A} $ and $\epsilon^{A}$ lift to $\ct{C}(P,Q)$, with respect to the defined $P$-intertwinings on $[(A,\sigma_{A}),-]^{l}$. We must check that 
$$ (P\eta_{B}^{A})\sigma_{B} =  \langle \sigma_{A},\sigma_{B}\tn \sigma_{A}\rangle (\eta_{B}^{A} P):B\tn P \rightarrow P\tn  [A,B\tn A]^{l} $$
holds. We must also check that the counit commutes with the $P$-intertwinings i.e.
$$ \sigma_{B}(\epsilon_{B}^{A}P)= (P\epsilon_{B}^{A})(\langle \sigma_{A},\sigma_{B}\rangle\tn\sigma_{A}):[A,B]^{l}\tn A \tn P \rightarrow P \tn B$$
holds. Both fact are proved in the form of commutative diagram, which are presented in Section \ref{SDiag}. Hence, we have demonstrated that the left closed structure of $\ct{C}$ lifts to $\ct{C}(P,Q)$ via the forgetful functor $U$. 
\end{proof}

\begin{cor}\label{CRCld} If $\ct{C}$ is a right closed monoidal category, then $\ct{C}(P,Q)$ has a right closed monoidal structure which lifts that of $\ct{C}$ and the forgetful functor $U$ is right closed.
\end{cor}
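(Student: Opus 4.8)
The plan is to deduce Corollary~\ref{CRCld} from Theorem~\ref{TCld} by exploiting the two symmetries already established in this section, rather than redoing the diagram chases. Recall from Section~\ref{SPre} that $\ct{C}$ being right closed means $X\tn -$ has a right adjoint $[X,-]^r$; equivalently, the opposite monoidal category $\ct{C}^{\op}$ (where $X\tn^{\op}Y=Y\tn X$) is left closed, with $[X,-]^{l}_{\ct{C}^{\op}}=[X,-]^{r}_{\ct{C}}$. So the idea is to transport the statement through $\ct{C}^{\op}$.

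First I would record the relevant identification of categories. If $(P,Q)$ is a pivotal pair in $\ct{C}$, then $(Q,P)$ is a pivotal pair in $\ct{C}^{\op}$: the morphism $\evl:Q\tn P\to\un$ becomes a morphism $P\tn^{\op}Q\to\un$, and so on, so the four duality morphisms of $(P,Q)$ in $\ct{C}$ serve, after swapping tensor order, as the duality morphisms exhibiting $P$ as a two-sided dual of $Q$ in $\ct{C}^{\op}$. Next I would check that $\ct{C}(P,Q)$, equipped with the monoidal structure of Theorem~\ref{TMon} but read in $\ct{C}^{\op}$, is exactly $\ct{C}^{\op}(Q,P)$: an object is a pair $(X,\sigma)$ with $\sigma: X\tn^{\op}Q=Q\tn X\to Q\tn^{\op}X=X\tn Q$ invertible, i.e.\ exactly an invertible $Q$-intertwining in $\ct{C}$, and the two induced-intertwining conditions \ref{Eqovsig1}--\ref{Eqovsig2} for $(Q,P)$ in $\ct{C}^{\op}$ unwind, after reversing tensor order, to the conditions defining membership in $\ct{C}(P,Q)$ via the isomorphism $\ct{C}(P,Q)\cong\ct{C}(Q,P)$ already described before Theorem~\ref{TMon}. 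One has to be mildly careful here: the most transparent route is to combine the isomorphism $\ct{C}(Q,P)\cong\ct{C}(P,Q)$ of this section with the evident identification $\ct{C}^{\op}(Q,P)=\ct{C}(Q,P)$ at the level of objects and morphisms, noting that the monoidal structure \ref{EqMonoidal} computed in $\ct{C}^{\op}$ matches the opposite of the monoidal structure of $\ct{C}(P,Q)$.

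Then the corollary follows formally: apply Theorem~\ref{TCld} to the left closed category $\ct{C}^{\op}$ and the pivotal pair $(Q,P)$ in it. This yields a left closed monoidal structure on $\ct{C}^{\op}(Q,P)$ lifting that of $\ct{C}^{\op}$, with forgetful functor left closed. Translating back through the identifications of the previous paragraph, a left closed structure on $\ct{C}^{\op}(Q,P)=\big(\ct{C}(P,Q)\big)^{\op}$ is precisely a right closed structure on $\ct{C}(P,Q)$, and "$U$ left closed over $\ct{C}^{\op}$" becomes "$U$ right closed over $\ct{C}$". Unwinding the formula for $\langle\sigma_A,\sigma_B\rangle_l$ from the proof of Theorem~\ref{TCld} in $\ct{C}^{\op}$ produces the explicit right inner-hom intertwining $\langle\sigma_A,\sigma_B\rangle_r$ on $[A,B]^{r}$, which I would state for the reader's convenience (it is obtained from the left-closed formula by the substitutions $\tn\leftrightarrow\tn^{\op}$, $\evl\leftrightarrow\evr$, $\cvl\leftrightarrow\cvr$, $\ov{\sigma}\leftrightarrow\ov{\sigma}^{-1}$ and reversing all composite tensor words).

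The main obstacle is purely bookkeeping: making sure that passing to $\ct{C}^{\op}$ swaps left/right duals, left/right closed structures, and the roles of $\cvl,\evl$ versus $\cvr,\evr$ all \emph{consistently}, so that $(Q,P)$ in $\ct{C}^{\op}$ really is the pair to which Theorem~\ref{TCld} applies and the output genuinely names the right-closed data on $\ct{C}(P,Q)$ and not some mismatched variant. There is no new analytic content — no diagram chase beyond those already done — but the chain of identifications $\ct{C}(P,Q)\cong\ct{C}(Q,P)=\ct{C}^{\op}(Q,P)$, together with the opposite-monoidal and opposite-closed dictionaries, must be spelled out carefully enough to be convincing. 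Once that dictionary is in place, the proof is a one-line invocation of Theorem~\ref{TCld}.
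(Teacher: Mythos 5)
Your proposal is correct and is essentially the paper's own argument: the paper likewise reduces to Theorem \ref{TCld} by passing to the opposite monoidal category $(\ct{C},\tn^{\op})$ and identifying $\ct{C}^{\op}(P,Q)$ with $\ct{C}(P,Q)^{\op}$ via the strict monoidal isomorphism $(A,\sigma)\mapsto(A,\sigma^{-1})$. The only (immaterial) difference in bookkeeping is that the paper keeps the pair $(P,Q)$ in $\ct{C}^{\op}$ --- with $\evr,\cvr$ now exhibiting $Q$ as a left dual of $P$ there --- rather than routing through $(Q,P)$ and the isomorphism $\ct{C}(P,Q)\cong\ct{C}(Q,P)$ as you do.
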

\begin{proof} One could prove this statement directly as done for the left closed structure in Theorem \ref{TCld}, however, we take a short-cut in this case. Notice that a right closed structure on $(\ct{C},\tn )$ corresponds to a left closed structure on $(\ct{C},\tn^{\op})$. Hence, the forgetful functor $U:\ct{C}(P,Q)\rightarrow \ct{C}$ lifting the right closed structure of $\ct{C}$ is equivalent to $U^{\op}:\ct{C}(P,Q)^{\op}\rightarrow \ct{C}^{\op}$ lifting the left closed structure of $\ct{C}^{\op}$, where by $\ct{C}^{\op}$ we mean $(\ct{C},\tn^{\op})$. On the other hand, we observe that $(P,Q)$ is again a pivotal pair in $\ct{C}^{\op}$ with $\evr$ and $\cvr$ making $Q$ a left dual of $P$ and $\evl$ and $\cvl$ making $Q$ a right dual of $P$ in $(\ct{C},\tn^{\op})$. Furthermore, we have an isomorphism of categories 
\begin{equation*}
\xymatrix@R-25pt{L:\ct{C}^{\op}( P,Q)\longrightarrow \ct{C}(P,Q)^{\op} 
\\ \quad(A, \sigma : A\tn^{\op} P \rightarrow P\tn^{\op} A )\mapsto (A,\sigma^{-1}: A\tn P\rightarrow P\tn A)   } 
\end{equation*}
\begin{equation*}
\xymatrix@R-25pt{R:\ct{C}(P,Q)^{\op}\longrightarrow \ct{C}^{\op}(P,Q ) 
\\ \hspace{1cm}( A,\sigma :A\tn P\rightarrow P\tn A) \mapsto (A,\sigma^{-1}:A\tn^{\op} P\rightarrow P\tn^{\op} A)}
\end{equation*}
which is monoidal i.e. for a pair of objects $(A,\sigma_{A}) $ and $(B,\sigma_{B}) $ in $\ct{C}(P,Q)^{\op}$, we have 
\begin{align*}
 R \big( (A,\sigma_{A}) \tn^{\op}(B,\sigma_{B}) \big)&= R\big( (B\tn A ,\sigma_{B}\tn \sigma_{A} )\big) = \big(B\tn A,(\sigma_{B}\tn \sigma_{A} )^{-1}\big)
 \\ &= \big( B\tn A , (B\tn \sigma_{A}^{-1} )( \sigma_{B}^{-1}\tn A) \big) 
 \\ &= ( A,\sigma_{A}^{-1}) \tn^{\op} (B, \sigma^{-1}) = R( (A,\sigma_{A})) \tn^{\op} R((B,\sigma_{B}))
\end{align*}
Moreover, $U^{\op}L$ is precisely the forgetful functor from $\ct{C}^{\op}(P ,Q)$ to $\ct{C}^{\op}$ which sends a pair $(A,\sigma)$ to $A$. By Theorem \ref{TCld}, we know that $U^{\op}L$ lifts the left closed structure of $\ct{C}^{\op}$ and since $L$ is a strict isomorphism of monoidal categories, we conclude that $U^{op}$ also lifts the left closed structure of $\ct{C}^{\op}$. 
\end{proof}
The proof of Corollary \ref{CRCld} allows us to compute the induced $P$-intertwinings on the right inner homs of $\ct{C}$ so that the right closed structure of $\ct{C}$ lifts to $\ct{C}(P,Q)$. Explicitly, if $(A,\sigma_{A})$ and $(B,\sigma_{B})$ are objects in $\ct{C}(P,Q)$, and $\Gamma^{A}_{-}: -\rightarrow [A,A\tn -]^{r} $ and $\Theta^{A}_{-}: A\tn [A,-]^{r}\rightarrow -$ denote the unit and counit of $-\tn A \dashv [A,-]^{r}$ in $\ct{C}$, then 
\begin{align*}
\langle \sigma_{A},\sigma_{B} \rangle_{r}  :=& (P[A,(\evl B)(Q\sigma_{B})(Q\Theta^{A}_{B}P)(\ov{\sigma}_{A}^{-1}[A,B]^{r}P)]^{r})(P\Gamma^{A}_{Q[A,B]^{r}P})
 \\&(\cvl[A,B]^{r}P)
\\\langle \sigma_{A},\sigma_{B} \rangle_{r}^{-1}  := &([A,(B\evr )(\sigma^{-1}_{B}Q)(P\Theta^{A}_{B}Q)(\sigma_{A}[A,B]^{r}Q)]^{l}P)(\Gamma^{A}_{P[A,B]^{r}Q}P)
\\&(P[A,B]^{r}\cvr)
\end{align*}
define a suitable $P$-intertwining on $ [A,B]^{r}$ so that the endofunctor $[(A,\sigma_{A}),-]^{r}$ which sends a pair $(B,\sigma_{B})$ in $\ct{C}(P,Q)$ to $ \big([A,B]^{r},\langle \sigma_{A},\sigma_{B} \rangle_{r}  \big)$ is right adjoint to $-\tn (A,\sigma_{A})$. 

\begin{cor}\label{CRig} If $\ct{C}$ is left (right) rigid, then $\ct{C}(P,Q)$ is left (right) rigid.
\end{cor}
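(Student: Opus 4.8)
The plan is to reduce the statement to the closed case already treated in Theorem \ref{TCld} and Corollary \ref{CRCld}. First I would observe that a left rigid category is in particular left closed: if every $X$ has a left dual $\pr{X}$ then $-\tn\pr{X}$ is right adjoint to $-\tn X$, so one may take $[X,-]^{l}:=-\tn\pr{X}$ and in particular $[X,\un]^{l}=\pr{X}$. Thus Theorem \ref{TCld} applies verbatim and provides a left closed monoidal structure on $\ct{C}(P,Q)$ lifting that of $\ct{C}$, for which the forgetful functor $U$ is strict monoidal and, with the normalisation of inner homs made in that proof, strictly left closed. In particular $U[(A,\sigma_{A}),(B,\sigma_{B})]^{l}=[A,B]^{l}$, and $U$ carries the unit and counit of the closed adjunction $-\tn(A,\sigma_{A})\dashv[(A,\sigma_{A}),-]^{l}$ to those of $-\tn A\dashv[A,-]^{l}$, hence carries the canonical comparison morphism $\theta_{(B,\sigma_{B})}\colon (B,\sigma_{B})\tn[(A,\sigma_{A}),(\un,\mathrm{id}_{P})]^{l}\to[(A,\sigma_{A}),(B,\sigma_{B})]^{l}$ to the corresponding morphism $B\tn\pr{A}\to[A,B]^{l}$ in $\ct{C}$.

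Next I would invoke the standard characterisation of left duals in a left closed monoidal category: an object $X$ has a left dual if and only if, for every $Y$, the canonical morphism $Y\tn[X,\un]^{l}\to[X,Y]^{l}$ is an isomorphism, and in that case $[X,\un]^{l}$ is a left dual of $X$ with evaluation and coevaluation obtained from the counit and unit of $-\tn X\dashv[X,-]^{l}$. In $\ct{C}$ this morphism is in fact the identity of $B\tn\pr{A}$ (by the snake identity), hence an isomorphism. The functor $U$ is faithful and conservative --- if a morphism of $\ct{C}(P,Q)$ is invertible in $\ct{C}$, then its inverse again intertwines the two $P$-intertwinings and so is a morphism of $\ct{C}(P,Q)$ --- so each $\theta_{(B,\sigma_{B})}$ is an isomorphism in $\ct{C}(P,Q)$. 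Applying the characterisation inside $\ct{C}(P,Q)$, every object $(A,\sigma_{A})$ admits the left dual $[(A,\sigma_{A}),(\un,\mathrm{id}_{P})]^{l}$, so $\ct{C}(P,Q)$ is left rigid. The right rigid case is handled identically, replacing Theorem \ref{TCld} by Corollary \ref{CRCld} and the inner homs $[A,-]^{l}=-\tn\pr{A}$ by $[A,-]^{r}=A^{\vee}\tn-$.

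A more computational alternative would be to equip $\pr{A}$ directly with the $P$-intertwining $\sigma_{\pr{A}}:=(\evl_{A}\tn P\tn\pr{A})(\pr{A}\tn\sigma_{A}^{-1}\tn\pr{A})(\pr{A}\tn P\tn\cvl_{A})$ --- the same formula that puts a half-braiding on the dual of an object of the center --- and then to check, in order: that $\sigma_{\pr{A}}$ is invertible; that its two induced $Q$-intertwinings on $\pr{A}$ are mutually inverse, so that $(\pr{A},\sigma_{\pr{A}})$ genuinely lies in $\ct{C}(P,Q)$; that $\evl_{A}$ and $\cvl_{A}$ are morphisms of $\ct{C}(P,Q)$ with respect to these data; and finally that the snake identities hold in $\ct{C}(P,Q)$, which is automatic because they hold in $\ct{C}$ and $U$ is faithful. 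I expect the only genuine content in either route to be modest: in the first it is the immediate observation that $U$ is conservative and transports the comparison morphisms; in the second it is the single truly ``pivotal'' point, namely that the induced $Q$-intertwinings on $\pr{A}$ are mutual inverses --- this is precisely where the defining restriction of $\ct{C}(P,Q)$ (rather than the larger category of all $P$-intertwined objects) enters, and it should follow from the corresponding property of $(A,\sigma_{A})$ by a diagram chase with the duality morphisms of the pivotal pair $(P,Q)$. Since Theorem \ref{TCld} is already available, I would present the first route.
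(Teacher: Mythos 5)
Your proposal is correct and takes essentially the same route as the paper, which likewise deduces the corollary by specialising Theorem \ref{TCld} and Corollary \ref{CRCld} to the rigid case where $[X,-]^{l}=-\tn\pr{X}$ and $[X,-]^{r}=X^{\vee}\tn-$; the explicit intertwinings the paper records on $\pr{X}=[X,\un]^{l}$ and $X^{\vee}=[X,\un]^{r}$ are exactly those of your ``computational alternative.'' The conservativity-of-$U$ argument in your first route is a clean way of making precise the paper's terse ``follows directly.''
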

\begin{proof} The statement follows directly from Theorem \ref{TCld} and Corollary \ref{CRCld}, when restricted to the case of a left or right rigid monoidal category. Explicitly, if $(X,\sigma)$ is an object in $\ct{C}(P,Q)$, then the $P$-intertwinings induce on $\pr{X}= [X,\un]^{l}$ and $X^{\vee}=[X,1]^{r}$ (if they exist), are denote by $\sigma_{\pr{X}}$ and $\sigma_{X^{\vee}}$, respectively, and are given by 
\begin{align*}
\sigma_{\pr{X}}&= (\evl_{X}P \pr{X})(\pr{X}\sigma^{-1}\pr{X} )(\pr{X}P \cvl_{X})
\\ \sigma_{\pr{X}}^{-1}&=(\evr \pr{X}P)(P\evl_{X} Q \pr{X}P)(P\pr{X}\ov{\sigma} \pr{X}P)(P\pr{X}Q\cvl_{X}P)(P\pr{X} \cvr )  
\\ \sigma_{X^{\vee}} &=(PX^{\vee}\evl)(PX^{\vee}Q \evr_{X} P)(PX^{\vee}\ov{\sigma}^{-1} X^{\vee} P)(P\cvr_{X}Q X^{\vee} P) (\cvl X^{\vee} P)
\\\sigma_{X^{\vee}}^{-1}&= (X^{\vee} P \evr_{X} )(X^{\vee} \sigma X^{\vee} )( \cvr_{X} PX^{\vee})
\end{align*} 
providing the left and right duals of $(X,\sigma)$ in $\ct{C}(P,Q)$.\end{proof}
\begin{thm}\label{TpivCP} If $\varrho : \mathrm{id}_{\ct{C}}\rightarrow \prescript{\vee\vee}{}{(-)}$ is a pivotal structure on $\ct{C}$ and $P$ is fixed by $\prescript{\vee\vee}{}{(-)}$, $\pr{P}=Q$ and $\varrho_{P}=\mathrm{id}_{P}$, then $\ct{C}(P,Q)$ is pivotal and the forgetful functor $U$ preserves this pivotal structure.
\end{thm}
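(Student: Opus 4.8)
The plan is to show that the pivotal structure $\varrho$ of $\ct{C}$ lifts, componentwise, along the forgetful functor $U$. Since a pivotal category is rigid, Corollary \ref{CRig} applies: $\ct{C}(P,Q)$ is rigid, with $\prescript{\vee}{}{(X,\sigma)}=(\pr{X},\sigma_{\pr{X}})$ for the $P$-intertwining $\sigma_{\pr{X}}$ recorded there, and the double dual of $(X,\sigma)$ obtained by iterating the same formula applied to $(\pr{X},\sigma_{\pr{X}})$. I will check that each $\varrho_{X}\colon X\to\prescript{\vee\vee}{}{X}$ is a morphism $(X,\sigma)\to\prescript{\vee\vee}{}{(X,\sigma)}$ in $\ct{C}(P,Q)$; once this is done the rest is formal. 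Indeed, setting $\tilde\varrho_{(X,\sigma)}:=\varrho_{X}$, naturality of $\tilde\varrho$ follows from naturality of $\varrho$ in $\ct{C}$ together with faithfulness of $U$; each $\varrho_{X}$ is invertible in $\ct{C}$ and its $\ct{C}$-inverse is automatically a morphism of $\ct{C}(P,Q)$ (it intertwines the same $P$-intertwinings in the reverse direction), so $\tilde\varrho$ is a natural isomorphism and $\ct{C}(P,Q)$ is pivotal; and since $U$ is strict monoidal and, with the duals of Corollary \ref{CRig}, strictly preserves chosen duals, the comparison isomorphism $\zeta\colon U(\pr{-})\to\pr{U(-)}$ is the identity (by the snake identity $(\evl_{X}\tn\pr{X})(\pr{X}\tn\cvl_{X})=\mathrm{id}_{\pr{X}}$), so the defining condition \ref{EqPivFun} for $U$ being pivotal collapses to $\varrho^{\ct{C}}_{X}=U(\tilde\varrho_{(X,\sigma)})$, which holds by construction.

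Thus the whole statement reduces to one identity in $\ct{C}$. Let $d_{A,B}\colon\prescript{\vee\vee}{}{A}\tn\prescript{\vee\vee}{}{B}\to\prescript{\vee\vee}{}{(A\tn B)}$ be the canonical isomorphisms making $\prescript{\vee\vee}{}{(-)}$ a strong monoidal endofunctor of $\ct{C}$, with respect to which $\varrho$ is monoidal, i.e. $\varrho_{A\tn B}=d_{A,B}(\varrho_{A}\tn\varrho_{B})$. Using $\prescript{\vee\vee}{}{P}=P$ and $\varrho_{P}=\mathrm{id}_{P}$, naturality of $\varrho$ on $\sigma\colon X\tn P\to P\tn X$ gives
$$(P\tn\varrho_{X})\,\sigma=\big(d_{P,X}^{-1}\,(\prescript{\vee\vee}{}{\sigma})\,d_{X,P}\big)\,(\varrho_{X}\tn P).$$
Hence $\varrho_{X}$ will be the required morphism provided the $P$-intertwining $\sigma_{\prescript{\vee\vee}{}{X}}$ obtained by iterating the formula of Corollary \ref{CRig} equals $d_{P,X}^{-1}(\prescript{\vee\vee}{}{\sigma})d_{X,P}$; informally, the double dual of the intertwining is the intertwining of the double dual.

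To verify this I would first record two consequences of the hypotheses: from $\varrho_{P}=\mathrm{id}_{P}$ and the standard compatibility of a pivotal structure with left duals (via the identity $\pr{(\varrho_{Y})}\,\varrho_{\pr{Y}}=\mathrm{id}_{\pr{Y}}$) one gets $\varrho_{Q}=\varrho_{\pr{P}}=\mathrm{id}_{Q}$, and consequently the right duality morphisms of the pivotal pair $(P,Q)$ are forced to be the ones induced by $\varrho$, namely $\cvr=\cvl_{Q}$ and $\evr=\evl_{Q}$ (this is the precise content of ``$\varrho$ compatible with $P$ and $Q$''). I would then unwind both sides: substituting the Corollary \ref{CRig} expressions for $\sigma_{\pr{X}}$ and its inverse into the formula applied to $(\pr{X},\sigma_{\pr{X}})$ expands $\sigma_{\prescript{\vee\vee}{}{X}}$ into an explicit composite in $\sigma^{\pm1}$, $\ov{\sigma}^{\pm1}$, the duality morphisms $\cvl_{X},\evl_{X},\cvl_{\pr{X}},\evl_{\pr{X}},\cvl,\evl,\cvr,\evr$ and $\varrho_{X}$, and the same for $d_{P,X}^{-1}(\prescript{\vee\vee}{}{\sigma})d_{X,P}$. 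Matching the two then becomes a diagram chase whose ingredients are: the snake identities for the left duals of $X$, $\pr{X}$ and $P$; naturality of $\varrho$ (applied to $\evl_{X}$, $\cvl_{X}$, $\evl_{\pr{X}}$); the identities $\varrho_{P}=\varrho_{Q}=\mathrm{id}$ and $\cvr=\cvl_{Q}$, $\evr=\evl_{Q}$; and the fact that, $(X,\sigma)$ lying in $\ct{C}(P,Q)$, the morphisms \ref{Eqovsig1} and \ref{Eqovsig2} are mutually inverse, together with the expressions of $\sigma^{\pm1}$ through $\ov{\sigma}^{\pm1}$ recorded in the discussion of the isomorphism $\ct{C}(Q,P)\cong\ct{C}(P,Q)$. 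This computation, comparable in nature to (but longer than) the chases in the proofs of Theorem \ref{TMon} and Corollary \ref{CRig}, is the one substantial step, and is cleanest done graphically; it is where I expect the main difficulty to lie, everything else being formal bookkeeping.
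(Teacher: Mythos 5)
Your reduction is the same as the paper's: everything comes down to checking that $\varrho_{X}$ intertwines $\sigma$ with the double-dual intertwining supplied by Corollary \ref{CRig}, and your observation that this amounts to the identity ``the double dual of the intertwining is the intertwining of the double dual,'' combined with monoidal naturality of $\varrho$ and $\varrho_{P}=\varrho_{Q}=\mathrm{id}$, is exactly the mechanism the paper uses. The one real shortfall is that you stop at the point where the work happens: you list the ingredients for verifying $\sigma_{\prescript{\vee\vee}{}{X}}=d_{P,X}^{-1}(\prescript{\vee\vee}{}{\sigma})d_{X,P}$ and predict a long diagram chase, but do not carry it out, so as written the proposal is a correct plan rather than a proof. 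It is worth knowing that the chase you are bracing for largely disappears if you organize it the way the paper does: iterating the formulas of Corollary \ref{CRig} and cancelling with the snake identities collapses $(\sigma_{\pr{X}})_{\prescript{\vee\vee}{}{X}}$ to $(P\,\prescript{\vee\vee}{}{X}\,\evl)(P\,\prescript{\vee\vee}{}{\ov{\sigma}}\,P)(\cvl\,\prescript{\vee\vee}{}{X}\,P)$ --- i.e.\ the same ``conjugate by duality morphisms'' formula that recovers $\sigma$ from $\ov{\sigma}$, now applied to $\prescript{\vee\vee}{}{\ov{\sigma}}$ --- after which naturality of $\varrho$ on $\ov{\sigma}$, the relation $\pr{(\varrho_{X})}=\varrho_{\pr{X}}^{-1}$ (giving $\varrho_{Q}=\mathrm{id}_{Q}$, as you note), and the identity $\sigma=(P X\evl)(P\ov{\sigma}P)(\cvl X P)$ finish the argument in three lines. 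One small caution: your parenthetical claim that the right duality morphisms of the pair are ``forced'' to be $\cvr=\cvl_{Q}$, $\evr=\evl_{Q}$ is not literally true (a right-dual structure is unique only up to an automorphism of $Q$); this is an implicit compatibility hypothesis on the pivotal pair rather than a consequence, though it does not affect the argument.
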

\begin{proof} In this case, the pivotal structure of $\ct{C}$ directly lifts to $\ct{C}(P,Q)$. Here, we demonstrate that $\varrho_{X}: (X,\sigma) \rightarrow \prescript{\vee\vee}{}{(X, \sigma)}$ commutes with the $P$-intertwinings for any object $(X,\sigma) $ of $\ct{C}(P,Q)$. Observe that by Corollary \ref{CRig}, $\prescript{\vee\vee}{}{(X, \sigma)}= (\prescript{\vee\vee}{}{X}, (\sigma_{\pr{X}})_{\prescript{\vee\vee}{}{X}})$ where 
\begin{align*}
(\sigma_{\pr{X}})_{\prescript{\vee\vee}{}{X}} &= (\evl_{\pr{X}}P (\prescript{\vee\vee}{}{X}))(\prescript{\vee\vee}{}{X}\sigma_{\pr{X}}^{-1}\prescript{\vee\vee}{}{X} )(\prescript{\vee\vee}{}{X}P \cvl_{\pr{X}})
\\&= (P (\prescript{\vee\vee}{}{X} ) \evl )( P (\prescript{\vee\vee}{}{\ov{\sigma}}) P) (\cvl \prescript{\vee\vee}{}{X} P )  
\end{align*}
Observe that in the above statement we are abusing notation and assuming that $\prescript{\vee\vee}{}{(-)}$ is strict monoidal whereas this is not necessarily the case and $\prescript{\vee\vee}{}{\ov{\sigma}}$ should denote a morphism from $\prescript{\vee\vee}{}{(Q\tn X)}$ to $\prescript{\vee\vee}{}{(X\tn Q)}$. However, this is not an issue since $\varrho_{X}$ is a monoidal isomorphism and commutes with the natural isomorphisms $\prescript{\vee\vee}{}{(Q\tn X)}\cong \prescript{\vee\vee}{}{Q}\tn \prescript{\vee\vee}{}{X}$. Since $\pr{\varrho_{X}} = \varrho_{\pr{X}}^{-1}$ holds, \cite{ng2007frobenius} Appendix A, we conclude that 
\begin{align*}
((\sigma_{\pr{X}})_{\prescript{\vee\vee}{}{X}} ) ( \varrho_{X} P) &= (P (\prescript{\vee\vee}{}{X} ) \evl )\big( P (\prescript{\vee\vee}{}{\ov{\sigma}})(\varrho_{Q}\tn \varrho_{X}) ( \varrho_{Q}^{-1} X )P\big) (\cvl X P ) 
\\&= (P (\prescript{\vee\vee}{}{X} ) \evl )(P(\varrho_{X} \tn \varrho_{Q}) P)( P \ov{\sigma}P)(P\varrho_{Q}^{-1} XP) (\cvl X P )
\\ &= (P\varrho_{X} ) ( PX \evl ) ( P\ov{\sigma}P ) (\cvl XP) = (P\varrho_{X} ) \sigma
\end{align*}
Hence $\varrho_{X}$ is morphism in $\ct{C}(P,Q)$ and lifts the pivotal structure of $\ct{C}$ trivially. 
\end{proof}
\begin{rmk}\label{RPiv} Notice that in the proof of Theorem \ref{TpivCP}, we only needed $\sigma$ for an arbitrary object $(X,\sigma)$ in $\ct{C}(P,Q)$ to commute with $\varrho_{P}$. Although this does not hold for arbitrary $P$-intertwinings, one could restrict to a subcategory of $\ct{C}(P,Q)$ where this additional condition holds. We will briefly discuss generalisations of this type in Section \ref{SExtension}.
\end{rmk}
Before concluding this section, we show that all colimits in $\ct{C}$ lift to $\ct{C}(P,Q)$. 
\begin{lemma}\label{LCol} If $\ct{C}$ is closed, the forgetful functor $U$ creates colimits. 
\end{lemma}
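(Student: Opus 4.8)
The key observation is that the forgetful functor $U:\ct{C}(P,Q)\rightarrow\ct{C}$ has both a left and a right adjoint when $\ct{C}$ is closed, so it preserves all colimits that exist; the real content is that it \emph{creates} them. Given a diagram $D:\ct{J}\rightarrow\ct{C}(P,Q)$ with objects $D(j)=(X_j,\sigma_j)$, I would first take a colimit $(X,\{\iota_j:X_j\rightarrow X\})$ of $U\circ D$ in $\ct{C}$, and then show there is a unique $P$-intertwining $\sigma:X\tn P\rightarrow P\tn X$ on $X$ making each $\iota_j$ a morphism in $\ct{C}(P,Q)$, and that $(X,\sigma)$ with the $\iota_j$ is a colimit in $\ct{C}(P,Q)$.

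\textbf{First I would construct the intertwining on the colimit.} Since $\ct{C}$ is closed, the functor $-\tn P$ is a left adjoint and hence preserves the colimit: $X\tn P$ is the colimit of the $X_j\tn P$ with cocone $\{\iota_j\tn P\}$. The family $\{(P\tn\iota_j)\sigma_j: X_j\tn P\rightarrow P\tn X\}$ is a cocone over this diagram (naturality of the $\sigma_j$ with respect to the morphisms of $D$ is exactly the condition that each $D$-morphism lies in $\ct{C}(P,Q)$), so it induces a unique $\sigma:X\tn P\rightarrow P\tn X$ with $\sigma(\iota_j\tn P)=(P\tn\iota_j)\sigma_j$; this is the only possible candidate, which gives uniqueness. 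To see $\sigma$ is invertible, I would run the same argument with $\sigma_j^{-1}$: since $P\tn-$ is also a left adjoint, $P\tn X$ is the colimit of the $P\tn X_j$, and the $\sigma_j^{-1}$ assemble into a morphism $\sigma':P\tn X\rightarrow X\tn P$; composing and using the universal property on generators shows $\sigma'\sigma=\mathrm{id}$ and $\sigma\sigma'=\mathrm{id}$.

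\textbf{Next I would verify the pivotal compatibility condition.} One must check that the induced $Q$-intertwinings $\ov\sigma$ and $\ov\sigma^{-1}$ of \ref{Eqovsig1}, \ref{Eqovsig2} are mutually inverse. The point is that $\ov\sigma$ is built from $\sigma$ by tensoring with fixed objects $Q$, $P$ and composing with $\evl,\cvl$ (resp. $\evr,\cvr$); since $Q\tn-$, $-\tn Q$ etc.\ are all left adjoints (as $\ct{C}$ is closed), $Q\tn X$ is again a colimit and one checks $\ov\sigma(Q\tn\iota_j)=(\iota_j\tn Q)\ov{\sigma_j}$ directly from the defining formula. Then $\ov\sigma\,\ov\sigma^{-1}$ and $\ov\sigma^{-1}\ov\sigma$ agree with the identity after precomposing with each $Q\tn\iota_j$ (using that $\ov{\sigma_j}$, $\ov{\sigma_j}^{-1}$ are inverse, as $D(j)\in\ct{C}(P,Q)$), hence equal the identity by the universal property. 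So $(X,\sigma)\in\ct{C}(P,Q)$.

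\textbf{Finally I would check the colimit property and uniqueness of the lift.} By construction each $\iota_j$ is a morphism in $\ct{C}(P,Q)$, giving a cocone. Given any competing cocone $\{g_j:(X_j,\sigma_j)\rightarrow(Y,\tau)\}$ in $\ct{C}(P,Q)$, the underlying $\ct{C}$-cocone factors uniquely through $X$ via some $g:X\rightarrow Y$; one then checks $g$ is a morphism in $\ct{C}(P,Q)$, i.e.\ $\tau(g\tn P)=(P\tn g)\sigma$, by precomposing both sides with the jointly epic family $\{\iota_j\tn P\}$ and using that the $g_j$ are morphisms in $\ct{C}(P,Q)$. Uniqueness of $g$ over the underlying category gives uniqueness in $\ct{C}(P,Q)$, so $U$ creates the colimit. \textbf{The main obstacle} is bookkeeping: being careful that every functor appearing in the formulas for $\sigma$ and for $\ov\sigma$ — all of which are of the form (fixed object)$\tn(-)$ or $(-)\tn$(fixed object), possibly iterated — is a left adjoint so that it preserves the relevant colimit, and that the various cocones one writes down are genuinely natural; the actual equalities then all follow from the universal property because the coprojections (tensored with fixed objects) remain jointly epic. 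No genuinely hard estimate or construction is needed beyond this.
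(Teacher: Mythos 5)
Your proposal is correct and follows essentially the same route as the paper: use closedness to see that $-\tn P$ and $P\tn -$ (and the analogous functors for $Q$) preserve the colimit, induce the intertwining $\sigma$ and its inverse from the cocones $(P\tn\iota_j)\sigma_j$ and $(\iota_j\tn P)\sigma_j^{-1}$, check the induced $Q$-intertwinings are inverse by precomposing with the coprojections, and verify the universal property the same way. The only inaccuracy is your opening claim that $U$ already has a left and a right adjoint — the left adjoint is only constructed later under an additional colimit hypothesis and a right adjoint is never produced — but this remark plays no role in your actual argument.
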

\begin{proof} Consider a diagram $\mathbb{D}:\ct{J}\rightarrow \ct{C}(P,Q)$ so that the diagram $U \mathbb{D}:\ct{J}\rightarrow \ct{C} $ has a colimit $A$ in $\ct{C}$ with a family of universal morphisms $\pi_{j}:U\mathbb{D}(j) \rightarrow A$ for objects $j$ in $ \ct{C}$. Since $\mathbb{D}:\ct{J}\rightarrow \ct{C}(P,Q)$ is a functor, we have a family of morphisms $\sigma_{j}:\mathbb{D}(j) \tn P \rightarrow P\tn \mathbb{D}(j)$ which are natural with respect to $\ct{J}$ and thereby form a natural transformation $\sigma : \mathbb{D} \tn P \Rightarrow P\tn \mathbb{D}$. Furthermore, because the category $\ct{C}$ is closed, the diagrams $U\mathbb{D}\tn P$ and $P\tn U\mathbb{D}$ admit colimits $A\tn P$ and $P\tn A$, respectively. By the universal property of $A\tn P$, there exists a unique morphism $\sigma_{A}$ such that $\sigma_{A}(\pi\tn P)= (P\tn \pi )\sigma$. Since $\sigma$ is invertible, it follows from the universal property of $P\tn A$ that there exists a unique morphism $\sigma_{A}^{-1}$ such that $\sigma^{-1}_{A}(P\tn \pi )=(\pi\tn P) \sigma^{-1}$. It follows that $\sigma_{A}$ and $\sigma_{A}^{-1}$ are inverses and similarly we conclude that the induced $Q$-intertwinings on $A$ are inverses. Hence, $(A,\sigma_{A})$ is an object of $\ct{C}(P,Q)$ and $\pi: \mathbb{D} \Rightarrow (A,\sigma_{A}) $ a cocone of the diagram. To demonstrate that $(A,\sigma_{A})$ is a colimit, consider another cocone $\kappa: \mathbb{D} \Rightarrow (B,\sigma_{B}) $. Since $A$ is a colimit of $U\mathbb{D}$, there exists a unique morphism $t:A\rightarrow B$ such that $U\kappa =t(U\pi )$. What remains to be shown is whether $t$ commutes with the $P$-intertwinings of $A$ and $B$ which follows from the universality of $A\tn P$ and the calculation below
\begin{align*}
(P\tn t )\sigma_{A}(U\pi\tn P)&= (P\tn t )(P\tn U\pi)= (P\tn U \kappa)
\\&=\sigma_{B}(U\kappa \tn P ) = \sigma_{B}(t \tn P )(U\pi\tn P )
\end{align*}
Hence $(P\tn t )\sigma_{A}= \sigma_{B}(t \tn P )$ and thereby, $(A,\sigma_{A})$ is a colimit of the original diagram $\mathbb{D}$. 
\end{proof}
\begin{cor} If $\ct{C}$ is a rigid abelian category, then $\ct{C}(P,Q)$ is rigid and abelian and the forgetful functor $U$ is exact. 
\end{cor}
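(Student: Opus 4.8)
The plan is to derive everything from the two results just established --- Corollary \ref{CRig} on rigidity and Lemma \ref{LCol} on creation of colimits --- together with standard facts about abelian categories, the point being that the abelian structure is transported along the forgetful functor $U$. Rigidity is immediate: a rigid category is both left and right rigid, so Corollary \ref{CRig} applied on each side gives that $\ct{C}(P,Q)$ is left and right rigid, hence rigid. Moreover a rigid category is closed --- the right adjoint of $-\tn X$ is $-\tn\pr{X}$ and that of $X\tn -$ is $X^{\vee}\tn -$ --- so Lemma \ref{LCol} applies and $U$ creates colimits; since $\ct{C}$ is abelian it has all finite colimits, so $\ct{C}(P,Q)$ has all finite colimits (zero object, finite coproducts, cokernels), created by $U$.

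Next I would prove the ``mirror'' of Lemma \ref{LCol}, namely that $U$ also creates limits. The extra input is that in a rigid category the functors $-\tn P$ and $P\tn -$ admit \emph{left} adjoints, namely $-\tn P^{\vee}$ and $\pr{P}\tn -$; hence they preserve limits (as well as colimits), and in particular preserve finite biproducts and so are additive. Given a finite diagram $\mathbb{D}\colon\ct{J}\to\ct{C}(P,Q)$ whose underlying diagram $U\mathbb{D}$ has a limit $A$ in $\ct{C}$ with projections $\pi$, the diagrams $U\mathbb{D}\tn P$ and $P\tn U\mathbb{D}$ have limits $A\tn P$ and $P\tn A$; the natural transformation $\sigma\colon\mathbb{D}\tn P\Rightarrow P\tn\mathbb{D}$ induces a unique $\sigma_A\colon A\tn P\to P\tn A$ with $(P\tn\pi)\sigma=\sigma_A(\pi\tn P)$, and --- exactly as in the proof of Lemma \ref{LCol} --- one checks that $\sigma_A$ is invertible, that its two induced $Q$-intertwinings are inverse, and that $(A,\sigma_A)$ is the limit of $\mathbb{D}$. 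So $\ct{C}(P,Q)$ has all finite limits (kernels, finite products), created by $U$.

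It remains to assemble the abelian axioms. The hom-sets of $\ct{C}(P,Q)$ are subgroups of those of $\ct{C}$: applying the additive functors $-\tn P$ and $P\tn -$ to the defining identity $\tau(f\tn P)=(P\tn f)\sigma$ shows that if $f,g\colon(X,\sigma)\to(Y,\tau)$ lie in $\ct{C}(P,Q)$ then so do $-f$ and $f+g$, and composition is bilinear because it is in $\ct{C}$; with the zero object and biproducts created by $U$ this makes $\ct{C}(P,Q)$ additive, and it has kernels and cokernels by the previous paragraphs. For normality, if $f\colon(X,\sigma)\to(Y,\tau)$ is a monomorphism then $Uf$ is a monomorphism in $\ct{C}$ (as $U$ preserves kernels and the zero object, $\ker(Uf)=U(\ker f)=0$); since $\ct{C}$ is abelian, $Uf=\ker(\mathrm{coker}(Uf))$, and lifting through $U$ first the cokernel $q$ of $f$ and then the kernel of $q$ --- each a uniquely determined lift of the corresponding (co)limit in $\ct{C}$ --- identifies $f$ with $\ker(\mathrm{coker}(f))$ in $\ct{C}(P,Q)$; dually every epimorphism is a cokernel. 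Hence $\ct{C}(P,Q)$ is abelian, and since $U$ preserves all finite limits and colimits it is exact.

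The two steps I expect to require genuine work, rather than formal bookkeeping, are the limit-version of Lemma \ref{LCol} --- the invertibility of $\sigma_A$ and of its induced $Q$-intertwinings must be rechecked there, this being the only place the definition of $\ct{C}(P,Q)$ really enters --- and the normality axiom, the one step that uses that $\ct{C}$ itself is abelian and not merely that $U$ creates the relevant (co)limits.
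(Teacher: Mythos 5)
Your argument is correct and takes essentially the same route as the paper: the paper likewise observes that since $-\tn X$ and $X\tn -$ preserve limits as well as colimits in a rigid category, a symmetric argument to Lemma \ref{LCol} shows that $U$ creates limits, that the additive structure of $\ct{C}$ lifts, and that creation of all finite limits and colimits then makes $\ct{C}(P,Q)$ abelian and $U$ exact. Your write-up merely fills in the details (invertibility of $\sigma_{A}$ in the limit case, lifting of the hom-group structure, the normality axiom) that the paper dispatches as ``trivial'' or ``symmetric.''
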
 
\begin{proof} Since in a rigid category $X \tn -$ and $-\tn X$ preserve limits as well as colimits, for arbitrary objects $X$ in $\ct{C}$, a symmetric proof to that of Lemma $\ref{LCol}$ demonstrates that $U$ creates limits. Furthermore, the additive structure of $\ct{C}$ lifts trivially and since $U$ creates all finite limits and colimits, $\ct{C}(P,Q)$ becomes abelian and $U$ exact. 
\end{proof}
We conclude this section with a small examples of what the category $\ct{C}(P,Q)$ looks like, for a well-known monoidal category. 
\begin{ex}\label{EGVec} Let $G$ be a finite group and  consider the monoidal category of finite dimensional $G$-graded vectorspaces $\mathrm{vec}_{G}$ with the usual monoidal structure, as described in Example 2.3.6 of \cite{etingof2016tensor} and denote its simple objects by $V_{g}$ where $g\in G$. Then for any $g\in G$, $P=V_{g}$ is pivotal and $Q=V_{g^{-1}}$ and the evaluation and coevaluation morphism are trivial identity morphisms of the ground field. Hence, the category $\ct{C}(V_{g},V_{g^{-1}})$ has pairs $(\oplus_{i=1}^{n}V_{h_{i}}, \sigma )$ as objects, where $n\in \mathbb{N}$, $h_{i}\in G$ and $ \sigma: \oplus_{i=1}^{n}V_{h_{i}g}\rightarrow \oplus_{i=1}^{n}V_{gh_{i}} $ is a $G$-graded isomorphism. Due to the trivial form of the the duality morphisms in $\mathrm{vec}_{G}$, for any such $\sigma$, $\ov{\sigma}$ and $\ov{\sigma}^{-1}$ will automatically be inverses. Note that for any object $(\oplus_{i=1}^{n}V_{h_{i}}, \sigma )$, the set $\lbrace h_{i} \mid 1\leq i\leq n\rbrace$ is a disjoint union of orbits of the conjugation action of $g$ on $G$.
\end{ex}
\section{Resulting Hopf Monads}\label{SMnd}
In this section, we assume that the category $\ct{C}$ is closed and has countable colimits. Thereby, $\tn$ commutes with colimits and the category of endofunctors $\mathrm{End}(\ct{C})$ also has countable colimits. Utilising this, we construct the Hopf monad whose Eilenberg-Moore category recovers $\ct{C}(P,Q)$. 

Observe that for a pair $(X,\sigma)$ in $\ct{C}(P,Q)$, we can view $\sigma$ and $\sigma^{-1}$ as certain \emph{actions} of the functors $Q\tn - \tn P$ and $P\tn - \tn Q$ on $X$: 
\begin{align*}
\xymatrix@C+43pt{Q\tn X \tn P\ar[r]^-{(\evl \tn X )(Q\tn \sigma)} & X  } \quad\quad \xymatrix@C+43pt{ P\tn X \tn Q\ar[r]^-{(X\tn \evr) (\sigma^{-1}\tn Q)}& X }
\end{align*}
Moreover, for any pair $(X,\sigma)$ in $\ct{C}(P,Q)$, we can translate the mentioned actions in terms of the induced $Q$-intertwinings, since $(X\tn \evl ) ( \ov{\sigma}\tn P) =(\evl \tn X )(Q\tn \sigma)$ and $(\evr \tn X)(P\tn \ov{\sigma}^{-1}) =(X\tn \evr) (\sigma^{-1}\tn Q)$.

Conversely, when provided with two morphisms $\alpha: Q\tn X \tn P\rightarrow X$ and $\beta :P\tn X \tn Q\rightarrow X$, we can recover right and left $P$-intertwinings as below: 
\begin{align*}
\xymatrix@C+55pt{ X \tn P\ar[r]^-{(P\tn \alpha)(\cvl \tn X \tn P )} & P\tn X } \quad \xymatrix@C+55pt{ P\tn X \ar[r]^-{(\beta\tn Q)(P\tn X\tn \cvr) }& X \tn P}
\end{align*}
If we want the induced $P$-interwtinings of $\alpha$ and $\beta$ to be inverses, we need the following equalities to hold: 
\begin{align}
\evr \tn X  = \beta(P\tn \alpha\tn Q)(P\tn Q \tn X\tn \cvl ) : P\tn Q \tn X\rightarrow X  \label{EqPQX}
 \\X\tn \evl =   \alpha ( Q\tn \beta \tn P ) ( \cvr \tn X\tn Q\tn P ) : X\tn Q\tn P\rightarrow X \label{EqXQP}
\end{align}
Similarly, $\alpha$ and $\beta$ induce $Q$-intertwinings, \ref{Eqovsig1} and \ref{Eqovsig2} which can be written as 
\begin{align*}
\xymatrix@C+55pt{ X \tn Q\ar[r]^-{(P\tn \beta)(\cvr \tn X \tn Q)} & X \tn Q} \quad \xymatrix@C+55pt{ Q\tn X \ar[r]^-{(\alpha\tn Q)(Q\tn X\tn \cvl) }& X \tn Q}
\end{align*}
In order for the induced $Q$-intertwinings to be inverses, we require the following equalities to hold:
\begin{align}
\evl \tn X  = \alpha(Q\tn \beta\tn P)(Q\tn P \tn X\tn \cvr ) : Q\tn P\tn X\rightarrow X \label{EqQPX}
 \\X\tn \evr =   \beta ( P\tn \alpha \tn Q ) ( \cvl\tn X\tn P\tn Q ) : X\tn P\tn Q\rightarrow X \label{EqXPQ}
\end{align}
With this view of $P$-intertwinings in mind, we construct the left adjoint functor to $U$. 

Define the endofunctors $F_{+},F_{-}:\ct{C} \rightarrow \ct{C}$ by 
$$F_{+}(X)=Q\tn X \tn P , \quad F_{-} (X)= P\tn X\tn Q  $$
Let the endofunctor $F^{\star}$ be defined as the coproduct 
$$F^{\star} = \coprod_{n\in \mathbb{N}\cup\lbrace 0\rbrace, (i_{1}, i_{2}, \dots, i_{n}) \in \lbrace -, + \rbrace^{n} } F_{i_{1}}F_{i_{2}} \cdots F_{i_{n}} $$
where the term $F_{i_{1}}F_{i_{2}} \cdots F_{i_{n}} $ at $n=0$, is just the identity functor $\id $. For arbitrary $ n\in \mathbb{N}$ and $ (i_{1}, i_{2}, \dots, i_{n}) \in \lbrace -, + \rbrace^{n} $, we denote $F_{i_{1}}F_{i_{2}} \cdots F_{i_{n}}$ by $F_{i_{1},i_{2},\dots , i_{n}}$ and the respective natural transformations $ F_{i_{1},i_{2},\dots , i_{n}}\Rightarrow F^{\star}$ by $\iota_{i_{1},i_{2},\dots , i_{n}}$. We denote the additional natural transformation $\id \Rightarrow F^{\star}$ by $\iota_{0}$. Hence, for any $F_{i_{1},i_{2},\dots , i_{n}}$ we have four parallel pairs: 
\begin{align} 
 \xymatrix@C+5cm{P\tn Q \tn F_{i_{1},i_{2},\dots , i_{n}} \ar@<0.5ex>[r]^-{\iota_{-,+, i_{1},i_{2},\dots , i_{n}}(P\tn Q \tn F_{i_{1},i_{2},\dots , i_{n}} \tn \cvl )}\ar@<-0.5ex>[r]_-{ \iota_{i_{1},i_{2},\dots , i_{n}}(\evr \tn F_{i_{1},i_{2},\dots , i_{n}})}& F^{\star}  }  \label{EqPPPQX}
\\ \xymatrix@C+5cm{ F_{i_{1},i_{2},\dots , i_{n}}\tn Q\tn P \ar@<0.5ex>[r]^-{\iota_{+,-, i_{1},i_{2},\dots , i_{n}}(\cvr \tn F_{i_{1},i_{2},\dots , i_{n}} \tn Q\tn P )}\ar@<-0.5ex>[r]_-{ \iota_{i_{1},i_{2},\dots , i_{n}}( F_{i_{1},i_{2},\dots , i_{n}}\tn \evl ) }& F^{\star}  }  \label{EqPPXQP}
\\  \xymatrix@C+5cm{Q\tn P \tn F_{i_{1},i_{2},\dots , i_{n}} \ar@<0.5ex>[r]^-{\iota_{+,-, i_{1},i_{2},\dots , i_{n}}(Q\tn P \tn F_{i_{1},i_{2},\dots , i_{n}} \tn \cvr )}\ar@<-0.5ex>[r]_-{ \iota_{i_{1},i_{2},\dots , i_{n}}(\evl \tn F_{i_{1},i_{2},\dots , i_{n}})}& F^{\star}  } \label{EqPPQPX}
\\ \xymatrix@C+5cm{ F_{i_{1},i_{2},\dots , i_{n}}\tn P\tn Q \ar@<0.5ex>[r]^-{\iota_{-,+, i_{1},i_{2},\dots , i_{n}}(\cvl \tn F_{i_{1},i_{2},\dots , i_{n}} \tn P\tn Q )}\ar@<-0.5ex>[r]_-{ \iota_{i_{1},i_{2},\dots , i_{n}}( F_{i_{1},i_{2},\dots , i_{n}}\tn \evr ) }& F^{\star}  }  \label{EqPPXPQ}
\end{align} 
Consider the diagram, in $\mathrm{End}(\ct{C})$, which the described parallel pairs create. We denote the colimit of this diagram by $T$, the unique natural transformation $F^{\star} \Rightarrow T$, by $\psi$, and the compositions $\psi \iota_{i_{1},i_{2},\dots , i_{n}}$ and $\psi\iota_{0}$, by $ \psi_{i_{1},i_{2},\dots , i_{n}}$ and $\psi_{0}$, respectively.

Since $\tn$ commutes with colimits, the family of morphisms
$$ \psi_{+, i_{1},i_{2},\dots , i_{n}} : Q\tn F_{i_{1},i_{2},\dots , i_{n}}\tn P  \rightarrow T $$ 
induce a unique morphism $ \alpha : Q\tn T \tn P \rightarrow T $ such that $ \alpha (Q\tn \psi_{i_{1},i_{2},\dots , i_{n}} \tn P)= \psi_{+, i_{1},i_{2},\dots , i_{n}}$. Similarly, the family of morphisms 
$$ \psi_{-, i_{1},i_{2},\dots , i_{n}} : P\tn F_{i_{1},i_{2},\dots , i_{n}}\tn Q  \rightarrow T $$ 
induce a morphism $ \beta : P\tn T \tn Q \rightarrow T $ such that $ \beta (P\tn \psi_{i_{1},i_{2},\dots , i_{n}} \tn Q)= \psi_{-, i_{1},i_{2},\dots , i_{n}}$. As mentioned at the start of the section, such actions $\alpha$ and $\beta$ provide us with the necessary $P$-intertwinings, but we must show that the induced $P$-intertwining belongs to $\ct{C}(P,Q)$.
\begin{lemma} For any object $X$ in $\ct{C}(P,Q)$, the pair 
$$\big( T(X), (P\tn \alpha_{X})(\cvl \tn T(X) \tn P)\big) $$ belongs to $\ct{C}(P,Q)$. 
\end{lemma}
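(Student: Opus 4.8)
The plan is to verify the four defining conditions for an object of $\ct{C}(P,Q)$ directly, using the explicit description of $\alpha$ and $\beta$ via the colimit $T$. First I would construct $\beta_X : P\tn T(X)\tn Q\rightarrow T(X)$ in the same manner as $\alpha_X$, and record that the $P$-intertwining $\sigma_{T(X)} := (P\tn \alpha_X)(\cvl\tn T(X)\tn P)$ has candidate inverse $(\beta_X\tn Q)(P\tn T(X)\tn\cvr)$, exactly as in the paragraph preceding the lemma. The work is then entirely encapsulated in the four identities \ref{EqPQX}, \ref{EqXQP}, \ref{EqQPX}, \ref{EqXPQ}: once these hold for the pair $(\alpha_X,\beta_X)$, the discussion at the start of the section shows $\sigma_{T(X)}$ is invertible with the stated inverse and that the induced $Q$-intertwinings on $T(X)$ are mutually inverse, whence $(T(X),\sigma_{T(X)})$ lies in $\ct{C}(P,Q)$.

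Next I would prove each of \ref{EqPQX}--\ref{EqXPQ} by the universal property of $T$. Since $\tn$ commutes with colimits, $T(X)$ is a colimit with universal cocone $\psi_{i_1,\dots,i_n,X}\colon F_{i_1,\dots,i_n}(X)\rightarrow T(X)$ together with $\psi_{0,X}$, so it suffices to check each identity after precomposition with all $\psi_{i_1,\dots,i_n}\tn(\text{appropriate }P\text{'s and }Q\text{'s})$ and with $\psi_{0}$. The point of the construction is that the four families of parallel pairs \ref{EqPPPQX}--\ref{EqPPXPQ} were chosen precisely so that their coequalization forces these relations: for instance, the pair \ref{EqPPPQX} says that in $T$ the composite $\psi_{-,+,i_1,\dots,i_n}(\ldots\tn\cvl)$ equals $\psi_{i_1,\dots,i_n}(\evr\tn\ldots)$, and unwinding the definitions of $\alpha$ and $\beta$ on these components turns \ref{EqPQX} into exactly this equality. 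So each of the four equations reduces, componentwise, to one of the four defining coequalizer relations of the colimit diagram, plus the bookkeeping identity $\alpha(Q\tn\psi\tn P)=\psi_{+,\ldots}$ and $\beta(P\tn\psi\tn Q)=\psi_{-,\ldots}$.

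The main obstacle — really the only nontrivial point — is the bookkeeping: matching indices correctly when one feeds a component $\psi_{i_1,\dots,i_n}$ through nested applications of $\alpha$ and $\beta$, and making sure the zig-zag (triangle) identities for the duality morphisms $\cvl,\evl,\cvr,\evr$ are applied in the right spots so that, e.g., $\beta(P\tn\alpha\tn Q)(P\tn Q\tn T(X)\tn\cvl)$ collapses to $\evr\tn T(X)$ on the $F_{i_1,\dots,i_n}(X)$-component. This is a routine but lengthy diagram chase, and I would present it as one representative case (say \ref{EqPQX}) in detail, noting that \ref{EqXQP}, \ref{EqQPX}, \ref{EqXPQ} follow by the symmetric arguments using parallel pairs \ref{EqPPXQP}, \ref{EqPPQPX}, \ref{EqPPXPQ} respectively. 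Finally I would observe that $(T(X),\sigma_{T(X)})$ depends functorially on $X$, though for the present lemma only the objectwise statement is needed.
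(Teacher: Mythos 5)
Your proposal is correct and follows essentially the same route as the paper: reduce the lemma to the four identities \ref{EqPQX}--\ref{EqXPQ} for $\alpha_X,\beta_X$, verify each by precomposing with the universal cocone components $\psi_{i_1,\dots,i_n}$ so that it collapses to the corresponding coequalized parallel pair among \ref{EqPPPQX}--\ref{EqPPXPQ}, and present \ref{EqPQX} as the representative case with the rest following symmetrically. The paper's proof does exactly this, including leaving the remaining three identities to the reader.
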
 
\begin{proof} As we demonstrated at the beginning of this section, we only need to check that equalities \ref{EqPQX}, \ref{EqXQP}, \ref{EqQPX} and \ref{EqXPQ} hold for the defined actions $\alpha_{X}$ and $\beta_{X}$. Consider equation \ref{EqPQX}. We observe that by construction 
\begin{align*}
(\evr&\tn  T) (P\tn Q \tn \psi_{i_{1},,i_{2},\dots , i_{n}})=\psi_{-,+, i_{1},i_{2},\dots , i_{n}}(P\tn Q \tn F_{i_{1},i_{2},\dots , i_{n}} \tn \cvr ) 
\\&= \beta(P\tn \psi_{+, i_{1},i_{2},\dots , i_{n}}\tn Q)(P\tn Q \tn F_{i_{1},i_{2},\dots , i_{n}} \tn \cvr ) 
\\&= \beta(P\tn \alpha \tn Q)(P\tn Q\tn \psi_{ i_{1},i_{2},\dots , i_{n}}\tn P\tn Q )(P\tn Q \tn F_{i_{1},i_{2},\dots , i_{n}} \tn \cvr )
\\&= \beta(P\tn \alpha \tn Q)(P\tn Q\tn T \tn \cvl )(P\tn Q\tn \psi_{ i_{1},i_{2},\dots , i_{n}} )
\end{align*}
and by the universal property of the functor $T$, we conclude that
$$ (\evr \tn  T)= \beta(P\tn \alpha \tn Q)(P\tn Q\tn T \tn \cvl )$$
It should be clear that \ref{EqXQP}, \ref{EqQPX} and \ref{EqXPQ} follow in a similar manner from the construction of the functor $T$, and we leave the details to the reader. \end{proof}
We denote the natural transformation $(P\tn \alpha)(\cvl \tn T \tn P ): T \tn P \Rightarrow P\tn T $ by $\sigma^{T}$. Hence, we define the functor $F:\ct{C}\rightarrow \ct{C}(P,Q)$ by  $F(X) = \big( T(X), \sigma^{T}_{X}\big)$ for objects $X$ of $\ct{C}$ and $F(f)=T(f) $ for morphisms $f$ of $\ct{C}$. By construction, $F$ is functorial. 
\begin{thm}\label{TAdj} The functor $F$ as defined above is left adjoint to $U$, making $F\dashv U$ a Hopf adjunction. 
\end{thm}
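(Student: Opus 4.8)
We deduce everything from the universal property of the colimit $T$. The plan rests on the reformulation given at the start of this section: an object of $\ct{C}(P,Q)$ is the same thing as an object $Y$ of $\ct{C}$ equipped with two morphisms $\alpha^{Y}\colon Q\tn Y\tn P\to Y$ and $\beta^{Y}\colon P\tn Y\tn Q\to Y$ satisfying \ref{EqPQX}, \ref{EqXQP}, \ref{EqQPX} and \ref{EqXPQ} --- for $(Y,\sigma)$ one takes $\alpha^{Y}=(\evl\tn Y)(Q\tn\sigma)$, $\beta^{Y}=(Y\tn\evr)(\sigma^{-1}\tn Q)$ and recovers $\sigma=(P\tn\alpha^{Y})(\cvl\tn Y\tn P)$ --- and a morphism of $\ct{C}(P,Q)$ is exactly a morphism of $\ct{C}$ commuting with both actions. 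Under this description $F(X)$ is $T(X)$ with the actions $\alpha_{X},\beta_{X}$ constructed above, and I will take the unit of the adjunction to be $\eta:=\psi_{0}\colon\mathrm{id}_{\ct{C}}\Rightarrow T=UF$. It then suffices to prove that, for all $X$ in $\ct{C}$ and $(Y,\sigma)$ in $\ct{C}(P,Q)$, the map $h\mapsto U(h)\eta_{X}$ is a bijection $\mathrm{Hom}_{\ct{C}(P,Q)}\big(F(X),(Y,\sigma)\big)\to\mathrm{Hom}_{\ct{C}}(X,Y)$, natural in both variables; the counit is then forced to be $\epsilon_{(Y,\sigma)}=\widetilde{\mathrm{id}_{Y}}$, in the notation introduced below.

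Given $f\colon X\to Y$, assemble a natural transformation $\hat f\colon F^{\star}(X)\to Y$ from its components on the summands $F_{i_{1},\dots,i_{n}}(X)$ (abbreviate a word $(i_{1},\dots,i_{n})$ by $w$), defined recursively by $\hat f\iota_{0,X}=f$, $\hat f\iota_{+,w,X}=\alpha^{Y}\big(Q\tn(\hat f\iota_{w,X})\tn P\big)$ and $\hat f\iota_{-,w,X}=\beta^{Y}\big(P\tn(\hat f\iota_{w,X})\tn Q\big)$. The key step is to check that $\hat f$ coequalizes each of the four families of parallel pairs \ref{EqPPPQX}, \ref{EqPPXQP}, \ref{EqPPQPX}, \ref{EqPPXPQ} evaluated at $X$. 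For instance, applying $\hat f$ to the two legs of \ref{EqPPPQX} at a word $w$ and unwinding the recursion, the first leg becomes $\beta^{Y}(P\tn\alpha^{Y}\tn Q)(P\tn Q\tn Y\tn\cvl)\big(P\tn Q\tn(\hat f\iota_{w,X})\big)$ and the second becomes $(\hat f\iota_{w,X})(\evr\tn F_{w}(X))$; these agree by a single use of \ref{EqPQX} and functoriality of $\tn$. The remaining three families are handled identically using \ref{EqXQP}, \ref{EqQPX} and \ref{EqXPQ} respectively. Hence $\hat f$ extends uniquely along $\psi_{X}$ to a morphism $\tilde f\colon T(X)\to Y$. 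That $\tilde f$ commutes with both actions, e.g. $\tilde f\alpha_{X}=\alpha^{Y}(Q\tn\tilde f\tn P)$, follows by precomposing with $Q\tn\psi_{w,X}\tn P$, using $\alpha_{X}(Q\tn\psi_{w,X}\tn P)=\psi_{+,w,X}$, $\tilde f\psi_{X}=\hat f$ and the recursion for $\hat f$, and invoking that $Q\tn-\tn P$ preserves the colimit $T$ (likewise for $\beta_{X}$). Thus $\tilde f\colon F(X)\to(Y,\sigma)$ is a morphism in $\ct{C}(P,Q)$, and by construction $U(\tilde f)\eta_{X}=\hat f\iota_{0,X}=f$.

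For the converse, given $h\colon F(X)\to(Y,\sigma)$ set $f=U(h)\eta_{X}$. Since $h$ commutes with the actions, an induction on the length of $w$ (again using $\alpha_{X}(Q\tn\psi_{w,X}\tn P)=\psi_{+,w,X}$ and its $\beta$-counterpart) shows that $U(h)\psi_{X}$ obeys the same recursion as $\hat f$, so $U(h)\psi_{X}=\hat f$ and therefore $h=\tilde f$ by uniqueness of the factorization through $\psi_{X}$. Hence $f\mapsto\tilde f$ is a two-sided inverse to $h\mapsto U(h)\eta_{X}$; naturality in $X$ and in $(Y,\sigma)$ is immediate from naturality of $\psi_{0}$ and of the assignments $\alpha^{(-)},\beta^{(-)}$. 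This establishes $F\dashv U$ with unit $\psi_{0}$.

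It remains to observe that the adjunction is Hopf. By Theorem \ref{TMon} the forgetful functor $U$ is strict monoidal, hence in particular strong monoidal, so $F\dashv U$ is a comonoidal adjunction. Since $\ct{C}$ is closed, Theorem \ref{TCld} and Corollary \ref{CRCld} say that $U$ is a left and a right closed functor; by the criterion recalled in the preliminaries --- a comonoidal adjunction is left (right) Hopf exactly when its right adjoint is a left (right) closed functor --- the adjunction $F\dashv U$ is Hopf. The only place where any real computation enters is the verification that $\hat f$ coequalizes the four families of parallel pairs, each instance being one application of the corresponding identity among \ref{EqPQX}--\ref{EqXPQ}; the subtlety to watch throughout is keeping the identification of $\ct{C}(P,Q)$ with the category of $(\alpha,\beta)$-modules consistent with the definitions of $\sigma^{T}$, $\alpha$ and $\beta$ on $T$.
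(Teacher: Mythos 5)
Your proposal is correct and follows essentially the same route as the paper: both rest on the universal property of the colimit $T$, build morphisms out of $T(X)$ by iterating the actions $\alpha,\beta$ over words in $\{+,-\}$, and reduce compatibility with the four parallel pairs to single applications of \ref{EqPQX}--\ref{EqXPQ}. The only (cosmetic) difference is that you package the adjunction as a natural hom-set bijection, whereas the paper constructs the counit $\theta$ directly (your $\tilde f$ is $\theta_{(Y,\sigma)}\circ T(f)$) and checks the triangle identities; your explicit appeal to Theorem \ref{TMon}, Theorem \ref{TCld} and Corollary \ref{CRCld} for the Hopf property matches what the paper records in the corollary immediately following.
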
 
\begin{proof} We provide the unit and counit of the adjunction explicitly and show they satisfy the necessary conditions. The unit of the adjunction was present in our construction as $\nu:=\psi_{0}:\id \Rightarrow UF=T$. For the counit, consider a pair $(X,\sigma )$ in $\ct{C}(P,Q)$ and denote its induced actions $ (\evl \tn X )(Q\tn \sigma)$ and $ (X\tn \evr) (\sigma^{-1}\tn Q)$ by $\alpha_{\sigma}: F_{+}(X)\rightarrow X$ and $ \beta_{\sigma}:F_{-}(X)\rightarrow X$, respectively. We can define $\theta_{ i_{1},i_{2},\dots , i_{n}}: F_{i_{1},i_{2},\dots , i_{n}} (X)\rightarrow X$, for arbitrary $ n\in \mathbb{N}$ and $ (i_{1}, i_{2}, \dots, i_{n}) \in \lbrace -, + \rbrace^{n} $, by iteratively applying $\alpha_{\sigma}$ and $\beta_{\sigma} $ so that $\theta_{+, i_{1},i_{2},\dots , i_{n}}= \alpha_{\sigma}(Q\tn\theta_{ i_{1},i_{2},\dots , i_{n}}\tn P)$ and $\theta_{-, i_{1},i_{2},\dots , i_{n}}= \beta_{\sigma}(P\tn \theta_{ i_{1},i_{2},\dots , i_{n}}\tn Q)$, where $\theta_{+}= \alpha_{\sigma}$ and $\theta_{-}= \beta_{\sigma}$. Together with $\theta_{0}=\mathrm{id}_{X}$, we have a family of morphisms from $F_{i_{1},i_{2},\dots , i_{n}}(X)$ to $X$, which must factorise through $F^{\star}(X)$. We denote the unique morphism $F^{\star}(X)\rightarrow X$ by $\theta^{\star}$ and observe that the family of morphisms described commute with the parallel pairs \ref{EqPPPQX}, \ref{EqPPXQP}, \ref{EqPPQPX} and \ref{EqPPXPQ} e.g. for the parallel pair \ref{EqPPPQX}: 
\begin{align*}
\theta^{\star} \iota&_{-,+, i_{1},i_{2},\dots , i_{n}}(P\tn Q \tn F_{i_{1},i_{2},\dots , i_{n}} \tn \cvr )
\\ =& \theta_{-,+, i_{1},i_{2},\dots , i_{n}} (P\tn Q \tn F_{i_{1},i_{2},\dots , i_{n}} \tn \cvr ) 
\\ =& \beta (P\tn \alpha \tn Q) ( P\tn Q\tn\theta_{ i_{1},i_{2},\dots , i_{n}} \tn Q\tn P )(P\tn Q \tn F_{i_{1},i_{2},\dots , i_{n}} \tn \cvr )
\\ =& \beta (P\tn \alpha \tn Q) (P\tn Q \tn X\tn \cvr )( P\tn Q\tn\theta_{ i_{1},i_{2},\dots , i_{n}} ) 
\\ =&\beta (P\tn (\evl \tn X) (Q\tn \sigma) \tn Q) (P\tn Q \tn X\tn \cvr )( P\tn Q\tn\theta_{ i_{1},i_{2},\dots , i_{n}} )
\\= &(X\tn \evr )(\sigma^{-1} \tn Q )(P\tn\ov{\sigma})( P\tn Q\tn\theta_{ i_{1},i_{2},\dots , i_{n}} )
\\ =& (\evr \tn X )( P\tn Q\tn\theta_{ i_{1},i_{2},\dots , i_{n}} ) = \theta^{\star} (\evr \tn \iota_{i_{1},i_{2},\dots , i_{n}})
\end{align*} 
Similar calculations follow for parallel pairs \ref{EqPPXQP}, \ref{EqPPQPX} and \ref{EqPPXPQ} from the properties of $\sigma$. Hence, by the universal property of $T(X)$, we conclude that there exists a unique morphism $\theta_{(X,\sigma )} :T(X)\rightarrow X$ such that $\theta_{X} \psi_{ i_{1},i_{2},\dots , i_{n}} = \theta_{ i_{1},i_{2},\dots , i_{n}}$. In fact, $\theta $ is a morphism between $(T(X),\sigma^{T}_{X}) $ and $(X,\sigma)$ in $\ct{C}(P,Q)$ i.e. $(P\tn \theta_{X} )\sigma^{T}_{X}= \sigma(\theta_{X}\tn P)$ holds: this is equivalent to $\alpha_{\sigma}(Q\tn \theta_{X} \tn P)= \theta_{X} \alpha_{X}$ which holds by definition of $\theta_{X}$. By universality of $T$, $\theta$ is natural and we have described a natural transformation $\theta : FU \Rightarrow \mathrm{id}_{\ct{C}(P,Q)}$. The triangle identities for the unit and counit $\nu$ and $\theta$ follow trivially by the universal property of $\theta$ since $\theta_{(T(X),\sigma^{T}_{X})}\nu_{X} =\mathrm{id}_{X}$ by definition.\end{proof}

\begin{cor} The adjunction $F\dashv U$ is monadic and the monad $(T, U\theta_{F} , \nu )$ is a Hopf monad. 
\end{cor}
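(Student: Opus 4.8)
The plan is to verify the two assertions in turn, each by assembling results already in place.

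\textbf{Monadicity.} First I would apply Beck's Theorem (Theorem \ref{TBeck}): it suffices to show that $U$ creates coequalizers of those parallel pairs $f,g$ in $\ct{C}(P,Q)$ for which $Uf,Ug$ admits a split coequalizer in $\ct{C}$. By standing assumption $\ct{C}$ is closed and has countable colimits, so in particular it has coequalizers; hence Lemma \ref{LCol} applies and $U$ creates all such colimits, in particular all coequalizers. A fortiori $U$ creates the coequalizers of $U$-split pairs, so Beck's criterion is met and $U$ is monadic. Consequently the comparison functor identifies $\ct{C}(P,Q)$ with the Eilenberg-Moore category $\ct{C}_T$, and, by the general formula $(GF,G\epsilon_F,\eta)$ recalled in the preliminaries applied to $F\dashv U$ with counit $\theta$ and unit $\nu$, the induced monad is precisely $(T, U\theta_F, \nu)$.

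\textbf{Hopf monad structure.} By Theorem \ref{TAdj}, $F\dashv U$ is a Hopf adjunction. Unwinding this via the preliminaries: an adjunction is comonoidal exactly when its right adjoint is strong monoidal, and it is left (resp.\ right) Hopf exactly when that right adjoint is moreover a left (resp.\ right) closed functor; and a comonoidal adjunction induces a bimonad, which is Hopf precisely when the adjunction is both left and right Hopf. Here $U$ is strict monoidal by Theorem \ref{TMon} and equation \ref{EqMonoidal} (hence strong monoidal), so $(T, U\theta_F, \nu)$ acquires a bimonad structure, with the comonoidal data $(T_2,T_0)$ transported from the monoidal structure of $U$ in the usual way. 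Theorem \ref{TCld} and Corollary \ref{CRCld} show that $U$ is both a left and a right closed functor, so the adjunction is both left and right Hopf, and therefore the bimonad $(T, U\theta_F, \nu)$ is in fact a Hopf monad, with fusion operators invertible because the closed structure of $\ct{C}(P,Q)$ lifts that of $\ct{C}$.

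\textbf{Main obstacle.} Essentially nothing new needs to be proved; the only points demanding care are bookkeeping ones: confirming that "$\ct{C}$ has countable colimits'' does supply the coequalizers needed for Lemma \ref{LCol} and that $U$-split coequalizers are genuinely a special case of the colimits $U$ is shown to create, and confirming that the abstract comonoidal structure $(T_2,T_0)$ coming from $U$ being strong monoidal agrees with the one one would read off directly from the colimit presentation of $T$ — though the latter agreement is forced by the uniqueness built into the defining universal properties.
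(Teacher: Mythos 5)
Your proposal is correct and follows essentially the same route as the paper: monadicity via Lemma \ref{LCol} combined with Beck's Theorem \ref{TBeck}, and the Hopf property via the lifting of the monoidal and closed structures. You are in fact slightly more careful than the paper's two-line proof, which cites only Theorem \ref{TCld} for the Hopf property where Corollary \ref{CRCld} (and the Hopf adjunction of Theorem \ref{TAdj}) is also needed, as you correctly note.
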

\begin{proof} By Lemma \ref{LCol} and Beck's Theorem \ref{TBeck}, the adjunction is monadic. By Theorem \ref{TCld}, the induced monad $(T, \epsilon , \eta )$ is a Hopf monad. \end{proof} 
At this point we would like to take a step back and look at the particular structure of $T$ as a bimonad. At the beginning of the section, we described how a $P$-intertwining $\sigma$ on an object $X$ is equivalent to a pair of suitable actions $\alpha_{\sigma}$ and $\beta_{\sigma}$ on $X$. In the proof of Theorem \ref{TAdj}, we showed that for any object $(X,\sigma)$ in $\ct{C}(P,Q)$, there exists a unique morphism $\theta_{(X,\sigma)}:T(X)\rightarrow X$ so that  $\theta\psi_{ i_{1},i_{2},\dots , i_{n}} = \theta_{ i_{1},i_{2},\dots , i_{n}}$, where $\theta_{ i_{1},i_{2},\dots , i_{n}}$ are just the iterative applications of $\alpha_{\sigma}$ and $\beta_{\sigma}$. In particular, for any object $X$ of $\ct{C}$, $T(X)$ has naturally suitable actions $\alpha_{X}$ and $\beta_{X}$ which satisfy $ \alpha_{X} (Q\tn (\psi_{i_{1},i_{2},\dots , i_{n}})_{X} \tn P)= (\psi_{+, i_{1},i_{2},\dots , i_{n}})_{X}$ and $ \alpha_{X} (P\tn (\psi_{i_{1},i_{2},\dots , i_{n}})_{X} \tn Q)= (\psi_{-, i_{1},i_{2},\dots , i_{n}})_{X}$. Hence, for the pair $F(X)=(T(X),\sigma^{T}_{X})$, $\theta_{F(X)} :TT(X) \rightarrow T(X)$ is the unique morphism such that 
$$ \theta_{F(X)} (\psi_{i_{1},\dots , i_{n}})_{T(X)} F_{ i_{1},\dots , i_{n}}\big( (\psi_{j_{1},\dots , j_{m}})_{X}\big)= (\psi_{i_{1},\dots , i_{n},j_{1},\dots , j_{m}})_{X}$$
for arbitrary non-negative integers $n,m$ and $i_{1},i_{2},\dots , i_{n}, j_{1},j_{2},\dots , j_{m}\in \lbrace +,-\rbrace$. As previously mentioned $\nu=\psi_{0}: \id  \rightarrow T$. The comonoidal structure of $T$ arises directly from the monoidal structure of $\ct{C}(P,Q)$. Observe that for pairs $(X,\sigma)$ and $(Y,\tau)$ , the induced action $\alpha_{\sigma\tn\tau}$ on $A\tn B$ is the composition $( \alpha_{\sigma}\tn \beta_{\sigma} )(P\tn X \tn \cvr \tn Y\tn Q)$. With this in mind, we observe that the comonoidal structure of $T$, $T_{2}: T(-\tn -)\rightarrow T(-)\tn T(-)$, is the unique morphism such that 
$$T_{2}\psi_{i_{1},\dots , i_{n}}=\big( \psi_{i_{1},\dots , i_{n}}\tn \psi_{i_{1},\dots , i_{n}}\big) F_{i_{1},\dots , i_{n}} ( -\tn \cvl_{i_{1},\dots , i_{n}}\tn -) $$
where $\cvl_{i_{1},\dots , i_{n}}: \un \rightarrow F_{-i_{n},\dots , -i_{1}}(\un)$ are iteratively defined by $\cvl_{+,i_{1},i_{2},\dots , i_{n}} = F_{-i_{n},\dots , -i_{1}}( \cvl )$ and $\cvl_{-,i_{1},i_{2},\dots , i_{n}} =F_{-i_{n},\dots , -i_{1}}( \cvr )$
where $\cvl_{+} =\cvl  $ and $\cvl_{-} =\cvr $. Recall that the $P$-intertwining making $\un $ the unit of the monoidal structure in $\ct{C}(P,Q)$ is simply the identity morphism $\mathrm{id}_{P}$ and its induced actions are $\alpha_{\mathrm{id}_{P}}= \evr $ and $\beta_{\mathrm{id}_{P}} = \evl$. Hence, morphism $T_{0}: T(\un ) \rightarrow \un $ is the unique morphism so that $T_{0} \psi_{i_{1},i_{2},\dots , i_{n}}= \evl_{i_{1},i_{2},\dots , i_{n}}$, where $\evl_{i_{1},\dots , i_{n}}: F_{i_{1},\dots , i_{n}} (\un)\rightarrow \un$ is defined iteratively by $\evl_{+,i_{1},\dots , i_{n}} = \evl F_{+}(\evl_{i_{1},i_{2},\dots , i_{n}})$ and $\evl_{-,i_{1},\dots , i_{n}} = \evr F_{-}(\evl_{i_{1},i_{2},\dots , i_{n}})$, with $\evl_{+}= \evl$ and $\evl_{-}=\evr$. 
\begin{thm}\label{ThmTAug} The Hopf monad $T$ is augmented if and only if there exist a braidings $\lambda : P\tn \id  \Rightarrow  \id  \tn P $ and $\chi : Q\tn \id  \Rightarrow  \id  \tn Q $  so that $(P,\lambda)$ and $(Q,\chi )$ are objects in $Z(\ct{C})$ and $\cvl,\evl,\cvr$ and $\evr$ are morphisms in $Z(\ct{C})$, making $(P,\lambda)$ and $(Q,\chi )$ a pivotal pair in $Z(\ct{C})$ . 
\end{thm}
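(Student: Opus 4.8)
The overall strategy is to identify augmentations of the Hopf monad $T$ with strict monoidal sections of the forgetful functor $U:\ct{C}(P,Q)\to\ct{C}$, and then to match these sections with pivotal pairs in $Z(\ct{C})$ lying over $(P,Q)$. By Lemma \ref{LCol} and Beck's Theorem \ref{TBeck}, $U$ is monadic with monad $T$, and by Theorem \ref{TMon} it is strict monoidal. Hence a bimonad morphism $\xi:T\Rightarrow\id$ — that is, an augmentation of $T$ — is the same datum as a functor $R:\ct{C}\to\ct{C}(P,Q)$ with $UR=\mathrm{id}_{\ct{C}}$ that is strict monoidal: the $T$-action on $R(X)$ recovers $\xi_X$, the $T$-module axioms for $R(X)$ are exactly $\xi\eta=\mathrm{id}$ and $\xi\mu=\xi T(\xi)$, and strict monoidality of $R$ is exactly the conditions $(\xi\tn\xi)T_2=\xi_{-\tn-}$ and $\xi_{\un}=T_0$. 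Unwinding the description of $\ct{C}(P,Q)$ and of its monoidal structure \ref{EqMonoidal}, such an $R$ is precisely a family of invertible $P$-intertwinings $\sigma_X:X\tn P\to P\tn X$, natural in $X$, with $\sigma_{\un}=\mathrm{id}_{P}$ and $\sigma_{X\tn Y}=(\sigma_X\tn Y)(X\tn\sigma_Y)$, subject to the requirement that each $(X,\sigma_X)$ genuinely lie in $\ct{C}(P,Q)$, i.e. that its two induced $Q$-intertwinings \ref{Eqovsig1} and \ref{Eqovsig2} be mutually inverse.

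For the implication ``$T$ augmented $\Rightarrow$ pivotal pair in $Z(\ct{C})$'', starting from such a family I would set $\lambda_X:=\sigma_X^{-1}:P\tn X\to X\tn P$ and $\chi_X:=\ov{\sigma_X}:Q\tn X\to X\tn Q$, the induced $Q$-intertwining of \ref{Eqovsig1}. The two conditions on $\sigma$ say precisely that $(P,\lambda)$ is an object of $Z(\ct{C})$, and transporting $R$ along the monoidal isomorphism $\ct{C}(P,Q)\cong\ct{C}(Q,P)$ (which sends $(X,\sigma)$ to $(X,\ov{\sigma}^{-1})$ and commutes with the forgetful functors) one gets likewise that $(Q,\chi)\in Z(\ct{C})$. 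It then remains to verify that $\cvl,\evl,\cvr,\evr$ are morphisms in $Z(\ct{C})$ between the relevant tensor products $(Q,\chi)\tn(P,\lambda)$, $(P,\lambda)\tn(Q,\chi)$ and the trivial object $(\un,\id)$. For $\evl$ this is the computation
$$(X\tn\evl)(\chi_X\tn P)(Q\tn\sigma_X^{-1})=(\evl\tn X)(Q\tn\sigma_X)(Q\tn\sigma_X^{-1})=\evl\tn X,$$
whose first equality is the identity $(X\tn\evl)(\ov{\sigma}\tn P)=(\evl\tn X)(Q\tn\sigma)$ recorded at the beginning of this section; the case of $\evr$ is symmetric, using in addition $\ov{\sigma_X}^{-1}\ov{\sigma_X}=\mathrm{id}$, and the cases of $\cvl,\cvr$ follow from the coevaluation analogues $(P\tn\ov{\sigma})(\cvl\tn X)=(\sigma\tn Q)(X\tn\cvl)$ of those identities, which one reads off from the definition of $\ov{\sigma}$ via a single zig-zag identity. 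Hence $(P,\lambda)$ and $(Q,\chi)$, together with the original duality morphisms, form a pivotal pair in $Z(\ct{C})$.

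Conversely, given a pivotal pair $(P,\lambda),(Q,\chi)$ in $Z(\ct{C})$ with $\cvl,\evl,\cvr,\evr$ central, I would put $\sigma_X:=\lambda_X^{-1}$. Naturality of $\lambda$ and the hexagon axiom for the center give naturality of $\sigma$ and the equalities $\sigma_{\un}=\mathrm{id}_{P}$, $\sigma_{X\tn Y}=(\sigma_X\tn Y)(X\tn\sigma_Y)$. A short diagram chase — using centrality of $\cvl,\evl$ (respectively $\cvr,\evr$) together with the companion identities above — then shows that the induced $Q$-intertwinings \ref{Eqovsig1} and \ref{Eqovsig2} of $\sigma_X$ are exactly $\chi_X$ and $\chi_X^{-1}$, hence mutually inverse, so $(X,\sigma_X)\in\ct{C}(P,Q)$. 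By the first paragraph this family defines a strict monoidal section of $U$, equivalently an augmentation of $T$.

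The main obstacle is the diagram bookkeeping in the two middle steps: checking that the induced $Q$-intertwining of $\lambda^{-1}$ is literally $\chi$ (and that the other induced intertwining is $\chi^{-1}$), and that all four duality morphisms are central, requires some care with the zig-zag identities relating $\sigma$ to $\ov{\sigma}$ — although, as the $\evl$ computation above illustrates, each individual check collapses quickly once the appropriate companion identity is in hand. A secondary point to make precise is the equivalence, used throughout, between augmentations of $T$ and strict monoidal sections of $U$ (for the trivial Hopf monad structure on $\id$).
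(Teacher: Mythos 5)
Your proposal is correct, but it takes a genuinely different route from the paper. The paper works directly with the colimit presentation of $T$: it defines $\lambda := (\xi \psi_{-} \tn P)( P\tn \id \tn \cvr )$ and $\chi := (\xi \psi_{+} \tn Q)( Q\tn \id \tn \cvl )$ from the augmentation $\xi$, verifies invertibility, the hexagon, and centrality of the four duality morphisms by chasing the bimonad-morphism identities through the parallel pairs \ref{EqPPPQX}--\ref{EqPPXPQ}, and conversely assembles $\xi$ from iteratively defined components $\xi_{i_{1},\dots,i_{n}}$. You instead translate an augmentation into a strict monoidal section $R$ of $U:\ct{C}(P,Q)\rightarrow\ct{C}$ (using monadicity from Lemma \ref{LCol} and the fact that the comparison functor is a strict monoidal isomorphism over $\ct{C}$), and then work entirely with the resulting family $\sigma_{X}$ inside $\ct{C}$. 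The two constructions produce the same half-braidings: since $\xi\psi_{-}=\beta_{\sigma}$ and $(\beta_{\sigma}\tn P)(P\tn X\tn\cvr)=\sigma^{-1}$, the paper's $\lambda$ is your $\sigma^{-1}$, and likewise its $\chi$ is your $\ov{\sigma}$. Your approach buys a cleaner conceptual picture — the hexagon for $(P,\lambda)$ is literally the monoidality of the section, and the centrality of $\evr$ and $\cvr$ is visibly equivalent to the defining condition $\ov{\sigma}\,\ov{\sigma}^{-1}=\mathrm{id}$ of $\ct{C}(P,Q)$ — at the cost of two inputs the paper does not need to invoke: the standard correspondence between bimonad morphisms $T\Rightarrow\id$ and strict monoidal sections of $U_{T}$ (which you rightly flag; it is in \cite{bruguieres2011hopf} and follows by unwinding the module axioms exactly as you indicate), and the fact that the isomorphism $\ct{C}(P,Q)\cong\ct{C}(Q,P)$ is not only compatible with the forgetful functors but strict monoidal, i.e.\ $\ov{\sigma\tn\tau}=(X\tn\ov{\tau})(\ov{\sigma}\tn Y)$ — the paper only states the former, so you should record that one-line check (it is the same manipulation as in Theorem \ref{TMon}) before transporting the section to deduce $(Q,\chi)\in Z(\ct{C})$. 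With those two points made explicit, your argument is complete and all the remaining verifications (the companion zig-zag identities for $\cvl,\cvr$ and the computation that the induced $Q$-intertwinings of $\lambda^{-1}$ are $\chi$ and $\chi^{-1}$ in the converse direction) go through as you describe.
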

\begin{proof} ($\Rightarrow$) Assume $T$ is augmented and there exists a Hopf monad morphism $\xi: T\Rightarrow \id $. Hence $\xi$ satisfies $\xi \nu =\id  $, $\xi\theta = \xi T(\xi)$, $(\xi \tn \xi )T_{2}=\xi _{-\tn - }$ and $ \xi_{\un}= T_{0}$. We consider the natural transformation $\lambda := (\xi \psi_{-} \tn P)( P\tn \id  \tn \cvr ) $. We now demonstrate that $\lambda$ is a braiding as required. First observe that $\lambda$ is invertible and $ \lambda ^{-1}:=(P\tn \xi \psi_{+} )( \cvl \tn \id  \tn P ) $ provides its inverse: 
\begin{align*}
\lambda \lambda^{-1} &=(\xi \psi_{-} \tn P)( P\tn \id  \tn \cvr )(P\tn \xi \psi_{+} )( \cvl \tn \id  \tn P )
\\  &= (\xi \psi_{-} \tn P)(F_{-}(\xi \psi_{+} )\tn P )( \cvl \tn \id  \tn P\tn \cvr )
\\  &= \big(\xi T(\xi) (\psi_{-})_{T} F_{-}( \psi_{+} )\tn P \big)( \cvl \tn \id  \tn P\tn \cvr )
\\  &= \big(\xi \theta (\psi_{-})_{T} F_{-}( \psi_{+} )\tn P \big)( \cvl \tn \id  \tn P\tn \cvr )
\\  &= (\xi \psi_{-,+}\tn P )( \cvl \tn \id  \tn P\tn \cvr )
\\ &= (\xi \psi_{0}\evr \tn P )(\id  \tn P\tn \cvr )= \id 
\end{align*}
The calculation showing $\lambda^{-1}\lambda = \id $ is completely symmetric and left to the reader. Observe that the braiding conditions follow from the properties of $\xi$. We can directly deduce that $\lambda_{\un} = \mathrm{id}_{P} $ since
\begin{align*}
(\xi (\psi_{-} )_{\un}\tn P)( P\tn \un\tn \cvr ) &= (T_{0} (\psi_{-} )_{\un}\tn P)( P\tn \un\tn \cvr ) 
\\ & = (\evr \tn P)( P\tn \un\tn \cvr ) = \mathrm{id}_{P}
\end{align*}
and $\lambda_{X\tn Y} = (X\tn \lambda_{Y})(\lambda_{X} \tn Y ) $ holds for any arbitrary pair of objects $X$ and $Y$ in $\ct{C}$ since 
\begin{align*}
\lambda&_{X \tn Y} = (\xi_{X\tn Y} (\psi_{-})_{X\tn Y}  \tn P)( P\tn X\tn Y \tn \cvr )
\\&=\big( (\xi_{X}\tn \xi_{Y}) T_{2}(X,Y) (\psi_{-})_{X\tn Y}  \tn P\big)( P\tn X\tn Y \tn \cvr )
\\ &=\big( \xi_{X}(\psi_{-})_{X}\tn \xi_{Y}(\psi_{-})_{Y}  \tn P\big)( P\tn X\tn \cvr \tn Y \tn \cvr )
\\ &= (X\tn \xi_{Y}(\psi_{-})_{Y} )(X\tn P\tn  Y \tn \cvr ) ( \xi_{X}(\psi_{-})_{X}\tn Y)( P\tn X\tn \cvr \tn Y )
\\ &= (X\tn \lambda_{Y})(\lambda_{X} \tn Y ) 
\end{align*}
Hence, $(P,\lambda )$ is an object in the center of $\ct{C}$. In the same manner, one can deduce that $\chi := (\xi \psi_{+} \tn Q)( Q\tn \id  \tn \cvl ) $ is a braiding with $ \chi ^{-1}:=(Q\tn \xi \psi_{-} )( \cvr \tn \id  \tn Q ) $ as its inverse. What remains to be checked is whether $\cvl,\evl,\cvr$ and $\evr$ are morphisms in $Z(\ct{C})$ and commute with the braidings of $\un$, $P\tn Q$ and $Q\tn P$. For $\evl$ we must demonstrate that $\evl \tn \id = (\id  \tn \evl  )(\chi \tn P)(Q\tn \lambda)$ which follows by considering the parallel pair \ref{EqPPQPX}:
\begin{align*}
(\id  \tn &\evl  )(\chi \tn P)(Q\tn \lambda) =(\id  \tn \evl  )(\chi \tn P) (Q\tn \xi \psi_{-} \tn P)
\\ &(Q\tn P\tn \id  \tn \cvr )
\\ =&(\id  \tn \evl  )(\xi \psi_{+} \tn Q\tn P)( Q\tn  \xi \psi_{-} \tn \cvl\tn P  ) (Q\tn P\tn \id  \tn \cvr )
\\ =& \xi T(\xi)(\psi_{+})_{T} F_{+}(\psi_{-})  (Q\tn P\tn \id  \tn \cvr )
\\ =& \xi \theta  (\psi_{+})_{T} F_{+}(\psi_{-})  (Q\tn P\tn \id  \tn \cvr )
\\ =&\xi \psi_{+,-}  (Q\tn P\tn \id  \tn \cvr )= \xi \psi_{0} (\evl \tn \id  ) =\evl \tn \id 
\end{align*}
For $\cvl$ we must show that $(\lambda \tn Q)(P\tn \chi) (\cvl\tn \id  )=\id \tn \cvl  $ which follows by considering the parallel pair \ref{EqPPXPQ}:
\begin{align*}
(\lambda \tn Q&)(P\tn \chi) (\cvl\tn \id  )= (\xi \psi_{-} \tn P\tn Q)( P\tn \id  \tn \cvr  \tn Q)
\\ &(P\tn \xi \psi_{+} \tn Q) (\cvl\tn \id \tn \cvl  )
\\ =&(\xi T(\xi) (\psi_{-})_{T}F_{-}(\psi_{+}) \tn P\tn Q)(P\tn F_{+} \tn \cvr  \tn Q)(\cvl\tn \id \tn \cvl  )
\\ =&(\xi \theta (\psi_{-})_{T}F_{-}(\psi_{+}) \tn P\tn Q)(P\tn F_{+} \tn \cvr  \tn Q)(\cvl\tn \id \tn \cvl  )
\\=&(\xi \psi_{-,+} \tn P\tn Q)(\cvl\tn \id \tn P \tn \cvr  \tn Q) ( \id \tn \cvl  )
\\ =&(\xi \psi_{0} \tn P\tn Q)( \id \tn \cvl  )=  \id \tn \cvl  
\end{align*}
In a symmetric fashion, by looking at \ref{EqPPPQX} and \ref{EqPPXQP} one can show that $\cvr$ and $\evr $ are also morphisms in $Z(\ct{C})$.

($\Leftarrow$) Assume there exist braidings $\lambda : P\tn \id  \Rightarrow  \id  \tn P $ and $\chi : Q\tn \id  \Rightarrow  \id  \tn Q $ making $(P,\lambda)$ and $(Q,\chi )$ objects in $Z(\ct{C})$, such that $\cvl,\evl,\cvr$ and $\evr$ are morphisms in $Z(\ct{C})$. We can iteratively define the natural transformations $\xi_{i_{1}, \dots i_{n}}: F_{i_{1}, \dots i_{n}} \Rightarrow \id $ by 
\begin{align*}
\xi_{+,i_{1}, \dots i_{n}}&=  ( \id \tn \evl) (\chi \tn P)F_{+}(\xi_{i_{1}, \dots i_{n}})
\\ \xi_{-,i_{1}, \dots i_{n}}&=  (\id \tn \evr ) ( \lambda\tn Q)F_{-}(\xi_{i_{1}, \dots i_{n}})
\end{align*}
where $\xi_{+}= ( \id \tn \evl)(\chi\tn P)$ and $\xi_{-}=(\id \tn \evr ) ( \lambda\tn Q)$ and $\xi_{0}=\id $. Since $\evl$ and $\evr$ commute with the braidings, then $\xi_{+}= (\evl \tn  \id )(Q\tn \lambda^{-1})$ and $\xi_{-}=(\evr\tn \id  ) (P\tn \chi^{-1})$. It is straightforward to check that $\xi_{i_{1}, \dots i_{n}}$ commute with the parallel pairs \ref{EqPPPQX}, \ref{EqPPXQP}, \ref{EqPPQPX} and \ref{EqPPXPQ}, and therefore induce a unique morphism $\xi :T \rightarrow \id $ e.g. for parallel pair \ref{EqPPPQX}:
\begin{align*}
\xi_{-,+,i_{1}, \dots i_{n}}&(P\tn Q\tn F_{i_{1}, \dots i_{n}}\tn \cvl ) = (\id \tn \evr)(\lambda \tn Q) ( P\tn \id \tn \evl\tn Q)
\\& (P\tn \chi\tn P\tn Q)F_{-,+}(\xi_{i_{1}, \dots i_{n}}) (P\tn Q\tn F_{i_{1}, \dots i_{n}}\tn \cvl )
\\ = &(\id \tn \evr)(\lambda\tn Q)(P\tn \chi)(P\tn Q\tn\xi_{i_{1}, \dots i_{n}} )
\\ = &(\evr\tn \id  )(P\tn Q\tn\xi_{i_{1}, \dots i_{n}} )= \xi_{i_{1}, \dots i_{n}} (\evr\tn \id  )
\end{align*}
Similar arguments follow for $\xi_{i_{1}, \dots i_{n}}$ commuting with parallel pairs \ref{EqPPXQP}, \ref{EqPPQPX} and \ref{EqPPXPQ}. Observe that by definition $\xi \nu =\id $ and $\xi_{\un}=T_{0}$ since $(\xi_{i_{1}, \dots i_{n}})_{\un} = (\evl_{i_{1}, \dots i_{n}})$. Furthermore, 
\begin{align*}
\xi\theta&(\psi_{i_{1},\dots , i_{n}})_{T} F_{ i_{1},\dots , i_{n}}( \psi_{j_{1},\dots , j_{m}})= \xi\psi_{i_{1},\dots , i_{n},j_{1},\dots , j_{m}} = \xi_{i_{1},\dots , i_{n},j_{1},\dots , j_{m}}
\\&=\xi_{i_{1},\dots , i_{n}} F_{i_{1},\dots , i_{n}}(\xi_{j_{1},\dots , j_{m}}) = \xi\psi_{i_{1},\dots , i_{n}} F_{i_{1},\dots , i_{n}}(\xi\psi_{j_{1},\dots , j_{m}})
\\ &= \xi T(\xi)(\psi_{i_{1},\dots , i_{n}})_{T} F_{ i_{1},\dots , i_{n}}( \psi_{j_{1},\dots , j_{m}})
 \\(\xi \tn \xi )T_{2}&(\psi_{i_{1}, \dots i_{n}})_{-\tn - }=  \big( \xi\psi_{i_{1},\dots , i_{n}}\tn \xi\psi_{i_{1},\dots , i_{n}}\big) F_{i_{1},\dots , i_{n}} ( -\tn \cvl_{i_{1},\dots , i_{n}}\tn -)
\\&=\big( \xi_{i_{1},\dots , i_{n}}\tn \xi_{i_{1},\dots , i_{n}}\big) F_{i_{1},\dots , i_{n}} ( -\tn \cvl_{i_{1},\dots , i_{n}}\tn -)=(\xi_{i_{1}, \dots i_{n}})_{-\tn - }
\end{align*}
and from the universal properties of $TT$ and $T(-\tn - )$, we conclude that $\xi \theta = \xi T(\xi)$ and $(\xi \tn \xi )T_{2}= \xi_{-\tn - }$ and thereby, $\xi$ is a bimonad morphism.
\end{proof} 
Recall that an augmentation on a Hopf monad is equivalent to a central Hopf algebra structure on $T(\un)$. To be more precise, $T(\un )$ together with $T_{2}(\un, \un)  :T(\un)\rightarrow T(\un) \tn T(\un ) $ and $T_{0} : T(\un ) \rightarrow \un$ form a comonoid in $\ct{C}$. Additionally, as we recalled in Theorem \ref{ThmAug}, if $(P,\lambda )$ and $(Q, \chi )$ is a pivotal pair in the center of $\ct{C}$, then we have a monoid structure on $T(\un )$, $m: T(\un) \tn T(\un )\rightarrow T(\un )$, $\psi_{0}(\un) : \un \rightarrow T(\un)$ where $m$ is the unique morphism satisfying  
\begin{align*}
m ( \psi_{i_{1},\dots , i_{n}}(\un) &\tn \psi_{j_{1},\dots , j_{m}}(\un) ) =\psi_{i_{1},\dots , i_{n},j_{1},\dots , j_{m}} (\un) 
\\ &\big( Q_{i_{1}}\tn\dots \tn Q_{i_{n}}\tn \lambda_{i_{1},\dots , i_{n}} ( Q_{j_{1}}\tn \dots \tn Q_{ j_{m}} \tn P_{j_{1}}\tn \dots \tn P_{j_{n}} )\big)
\end{align*}
where we denote $P_{+}=P$, $P_{-}=Q$, $Q_{+}=Q$ and $Q_{-}=P$ so that $F_{\pm}(-)= Q_{\pm}\tn -\tn P_{\pm}$ and $\lambda_{i_{1},\dots , i_{n}} : P_{i_{1}}\tn\dots \tn P_{i_{n}} \tn \id  \rightarrow \id  \tn  P_{i_{1}}\tn\dots \tn P_{i_{n}}$ is the induced braiding on $P_{i_{1}}\tn\dots \tn P_{i_{n}}$, where $\lambda_{+}=\lambda$, $\lambda_{-}= \chi$ and $\lambda_{i_{1},\dots , i_{n}}= (\lambda_{i_{1}} \tn P_{i_{2}}\tn\dots \tn P_{ i_{n}} ) (P_{i_{1}}\tn \lambda_{i_{2},\dots , i_{n}})$. Observe that $T(\un )$ has an induced braiding $\varsigma : T(\un ) \tn \id  \Rightarrow \id \tn T(\un )$ satisfying 
$$\varsigma ( \psi_{i_{1},\dots , i_{n}}(\un)  \tn \id  ) = (\id \tn \psi_{i_{1},\dots , i_{n}}(\un)) \lambda_{-i_{1},\dots , -i_{n},i_{n},\dots , i_{1}}  $$ 
It follows that $ (T(\un) ,\varsigma ) $ is an object in the center of $\ct{C}$ and together with $m$, $\psi_{0}(\un)$, $ T_{2}(\un,\un )$, $T_{0}$ forms as central bialgebra. Moreover, we have an induced isomorphism of bimonads $\Upsilon : T(-)\Rightarrow T(\un) \tn - $ defined as the unique morphism satisfying 
$$\Upsilon \psi_{i_{1},\dots , i_{n}} =( \psi_{i_{1},\dots , i_{n}}(\un) \tn \id ) (Q_{i_{1}}\tn\dots \tn Q_{i_{n}}\tn \lambda^{-1}_{i_{n},\dots , i_{1}} )$$ 
Additionally, $T(\un )$ becomes a central Hopf algebra and its antipode $S:T(\un) \rightarrow T(\un)$ can be recovered as the unique morphism satisfying 
\begin{align*}S \psi_{i_{1},\dots , i_{n}} = &(\evl_{i_{1},\dots , i_{n}}\tn \psi_{-i_{n},\dots , -i_{1}}(\un))(Q_{i_{1}}\tn\dots \tn Q_{i_{n}} \tn \lambda^{-1}_{i_{n},\dots , i_{1}}( F_{-i_{n},\dots , -i_{1}}) ) 
\\ &(F_{i_{1},\dots , i_{n}} (\un ) \tn \cvl_{-i_{n},\dots , -i_{1}})
\end{align*}
with its inverse $S^{-1}: T(\un) \rightarrow T(\un) $ defined as the unique morphism satisfying
\begin{align*} S^{-1} &\psi_{i_{1},\dots , i_{n}} = (\evl_{i_{1},\dots , i_{n}}\tn \psi_{-i_{n},\dots , -i_{1}}(\un))
\\ &(Q_{i_{1}}\tn\dots \tn Q_{i_{n}} \tn \lambda^{-1}_{i_{n},\dots , i_{1}}( F_{-i_{n},\dots , -i_{1}}) ) (\cvl_{-i_{n},\dots , -i_{1}}\tn F_{i_{1},\dots , i_{n}} (\un )  )
\end{align*} 
We now review the structure of the constructed Hopf monad on some familiar categories. 
\begin{ex}\label{EBraidHpf} If $\ct{C}$ is braided with braiding $\Psi$, then $(P,\Psi_{P,-})$ and $(Q,\Psi_{Q,-})$ naturally form a pivotal pair in $Z(\ct{C})$ and by Theorem \ref{ThmAug}, $T$ is augmented. In particular, $T\cong T(1)\tn - $ where $T(1)$ is in fact a braided Hopf algebra in $\ct{C}$ since the induce braiding on $T(\un)$ will naturally coincide with the braiding $\Psi_{T(\un),-}$.
\end{ex} 
\begin{ex}\label{EHopfAlg} Let $\field$ be an arbitrary field and consider its symmetric monoidal category of vector spaces $(\mathrm{Vec}, \tn ,\field )$, where $\tn$ denotes the tensor product over the field. Any finite dimensional vectorspace is dualizable and pivotal with its dual vectorspace having the same dimension and the trivial evaluation and coevaluation providing the duality morphism for both sides. Since the category is symmetric, by Theorem \ref{ThmAug}, any Hopf monad constructed as above is augmented. Hence, the monad should arise from a Hopf algebra. Explicitly, for an $n$-dimensional vectorspace, the monad $F^{\star}$ is isomorphic to $B\tn - $, where $B$ is the free algebra $\field \langle \omx{i}{f}{j},\omx{i}{e}{j}\mid 1\leq i,j \leq n \rangle$ and generators $\omx{i}{f}{j} $ and $\omx{i}{e}{j}$ correspond to the bases of $P\tn Q$ and $Q\tn P$, respectively. Consequently, the monad $T$ is isomorphic to the induced monad, $H\tn - $, where $H$ is the quotient of the algebra $B$ by relations 
$$\sum_{j=1}^{n} \omx{j}{f}{i} .\omx{j}{e}{k} = \sum_{j=1}^{n} \omx{i}{f}{j} .\omx{k}{e}{j} = \sum_{j=1}^{n} \omx{j}{e}{i} .\omx{j}{f}{k}  = \sum_{j=1}^{n} \omx{i}{e}{j} .\omx{k}{f}{j} = \delta_{i,k}  $$
for all $1\leq i,k\leq n$. The coproduct, counit and antipode, $\Delta, \epsilon, S$ of the Hopf algebra $H$ are defined as 
\begin{align*}
\Delta ( \omx{i}{f}{k}) =   \sum_{j=1}^{n} \omx{i}{f}{j} \tn \omx{j}{f}{k}, \quad \epsilon (\omx{i}{f}{k})=\delta_{i,k}, \quad S(\omx{i}{f}{k}) = \omx{k}{e}{i}
\\\Delta ( \omx{i}{e}{k}) =   \sum_{j=1}^{n} \omx{i}{e}{j} \tn \omx{j}{e}{k}, \quad \epsilon (\omx{i}{e}{k})=\delta_{i,k}, \quad S(\omx{i}{e}{k}) = \omx{k}{f}{i}
\end{align*} 
for all $1\leq i,k\leq n$.
\end{ex} 
\begin{rmk}\label{RNGL} The Hopf algebra constructed in the above example can be viewed as a suitable quotient of the \emph{free Matrix Hopf algebra} $\ct{NGL}(n)$ discussed in \cite{skoda2003localizations}. The free matrix bialgebra of rank $n^{2}$ is exactly the free algebra $\field \langle \omx{i}{f}{j}\mid 1\leq i,j\leq n \rangle$ with the coproduct and counit defined as in Example \ref{EHopfAlg}. The Hopf envelope of this bialgebra as defined by Manin \cite{manin1988quantum}, would be the quotient of $\coprod_{l\in \mathbb{N}} B_{i}$ where $B_{l}=\field \langle \omx{i}{f^{l}}{j}\mid 1\leq i,j\leq n \rangle$ by the relations $\sum_{j=1}^{n} \omx{i}{f^{l}}{j} .\omx{k}{e^{l+1}}{j} = \sum_{j=1}^{n} \omx{j}{f^{l+1}}{i} .\omx{j}{f^{l}}{k}  = \delta_{i,k}$ for all $l\in \mathbb{N}$ and the antipode is defined as the shift $S(\omx{i}{f^{l}}{j}) = \omx{j}{f^{l+1}}{i} $. The Hopf algebra of Example \ref{EHopfAlg} is just the quotient of this Hopf algebra by requiring the $B_{l}$ components to be equal for odd $l$ and similairly for even $l$, so that $S^{2}= \mathrm{id}$.
\end{rmk}
\begin{ex}\label{EHopfMany} Notice that any $n$-dimensional $\field$-vectorspace, $P$, in fact belongs to a $GL(n,\field )$ moduli of pivotal pairs. Let $(P,Q)$ denote a pivotal pair in $\mathrm{Vec}$, with $\cvl, \evl, \cvr, \evr$ representing the relevant duality morphisms. We can always pick a basis $\lbrace v_{i}\rbrace_{i=1}^{n}$ for $P$ and change the basis of $Q$, to $\lbrace w_{i}\rbrace_{i=1}^{n}$, so that $\evl (w_{i}, v_{j})=\delta_{i,j} $ and $\cvl (1)= \sum_{i=1}^{n} v_{i}\tn w_{i}$. Consequently, $\cvr$ must be of the form $\cvr (1)= \sum_{i,j=1}^{n} q_{ij} w_{i}\tn v_{j}$, where $\mathfrak{Q}= (q_{ij})_{1\leq i,j\leq n}$ forms an invertible matrix and $\evr ( v_{i} \tn w_{j})= p_{ij}$, where $\mathfrak{Q}^{-1}= (p_{ij})_{1\leq i,j\leq n}$. Hence, we can associate a Hopf algebra to each invertible matrix $\mathfrak{Q}$, as a quotient of the free algebra $\field \langle \omx{i}{f}{j},\omx{i}{e}{j}\mid 1\leq i,j \leq n \rangle$ by relations 
$$\sum_{j=1}^{n} \omx{j}{f}{k} .\omx{j}{e}{i} = \sum_{j=1}^{n} \omx{i}{f}{j} .\omx{k}{e}{j} =p_{ik},\quad \sum_{j,l=1}^{n} \omx{j}{e}{i} .\omx{l}{f}{k} q_{jl} = \sum_{j,l=1}^{n}\omx{i}{e}{j} .\omx{k}{f}{l}  q_{lj}= \delta_{i,k}  $$
for all $1\leq i,k\leq n$. The coproduct and counit, $\Delta, \epsilon$, of the Hopf algebra are defined as in Example \ref{EHopfAlg}, for $\omx{i}{e}{k}$, and extended by 
\begin{align*}
\Delta ( \omx{i}{f}{k}) =   \sum_{j,l=1}^{n} \omx{i}{f}{j} \tn \omx{l}{f}{k}q_{jl}, \quad \epsilon (\omx{i}{f}{k})=p_{ik}
\\ S(\omx{i}{e}{k}) =\sum_{l=1}^{n}\omx{k}{f}{l}q_{li}, \quad S(\omx{i}{f}{k}) = \sum_{l=1}^{n}\omx{k}{e}{l}p_{il}
\end{align*} 
for all $1\leq i,k\leq n$.
\end{ex}

\begin{ex}\label{EHpfAlebroid}[Theorem 4.11 \cite{ghobadi2020hopf}] If $A$ is a $\field$-algebra and $P$ a pivotal object in the category of $A$-bimodules, $\bim$, the arising Hopf monad was constructed in \cite{ghobadi2020hopf}. As proven in \cite{szlachanyi2003monoidal}, additive bimonads and Hopf monads on $\bim$ which admit a right adjoint correspond to left bialgebroids and Hopf algebroids over $A$, in the sense of Schauenburg \cite{schauenburg2000algebras}. We adapt the notation of \cite{bohm2018hopf} to describe the Hopf algebroid in question and refer the reader to \cite{ghobadi2020hopf} where the construction is described in full detail. Consider the $A\tn_{\field} A^{op}$-bimodule structure induced on $Q\tn_{\field}P $ (resp. $P\tn_{\field} Q$) where we regard $Q$ (resp. $P$) as an $A$-bimodule and $P$ (resp. $Q$) as an $A^{op}$-bimodule, where $A^{op}$ denotes the opposite algebra to $A$. We denote arbitrary elements of $Q\tn_{\field}P $ and $P\tn_{\field} Q$ by $(q,p)$ and $(p,q)$, respectively, and we denote elements of $A^{op}$ in $H$ by a line over head i.e. $a\in A\ \subset H$ and $\ov{a}\in A^{op}\subset H$.. We define the Hopf algebroid $H$ as the quotient of the free $A\tn_{\field} A^{op}$-algebra $T_{A\tn_{\field} A^{op}} ( Q\tn_{\field}P\oplus P\tn_{\field} Q )$ by relations 
\begin{align*}
  \sum_{i=1}^{n}(\omega_{i},q) (x_{i},p )&=\ov{\evr(p\otimes q)}  \quad \sum_{j=1}^{m}(y_{j},p) (\rho_{j},q)=\ov{\evl(q\tn  p)} 
\\   \sum_{j=1}^{m} (q,\rho_{j})(p, y_{j})&=\evl(q\otimes p) \quad \sum_{i=1}^{n}(p, x_{i})(q,\omega_{i})=\evr(p \tn q )
\end{align*}
for all $p\in P$ and $q\in Q$, where $\cvl(1)= \sum_{i=1}^{n} \omega_{i}\otimes x_{i} $ and $\cvr(1)= \sum_{j=1}^{m} y_{j}\otimes \rho_{j}$  for positive integers $n,m$. The coproduct and counit of $H$, $\Delta$ and $\epsilon$, are defined by 
\begin{align*}
\Delta (a\ov{b})&= a\otimes \ov{b}, \quad \epsilon (a\ov{b})= ba \quad \epsilon((p,q))=\evl(p \tn q)\quad \epsilon((q,p))=\evr(q\tn p)
\\ \Delta &((q,p ))= \sum_{i=1}^{n} (q,\omega_{i} )\otimes (x_{i},p ) \quad \Delta ((p,q ))= \sum_{j=1}^{m} (p,y_{j} )\otimes (\rho_{j},q )  
\end{align*}
In \cite{ghobadi2020hopf}, we show that $H$ is not only a left Hopf algebroid in the sense of Schauenburg, but furthermore a Hopf algebroid in the sense of B{\"o}hm, and Szlach{\'a}nyi \cite{bohm2004hopf} and admits an invertible antipode $S$ acting as $S ((p,q)) = (q,p)$ with $S=S^{-1}$. 
\end{ex} 

\section{Generalizations}\label{SGenre}
\subsection{On Generalisation to Pivotal Diagrams}\label{SExtension}

By a \emph{pivotal diagram}, we mean a functor $\mathbb{D}: \ct{J}\rightarrow \ct{C}_{0}^{piv}$ from a small category $\ct{J}$ to the category $\ct{C}_{0}^{piv}$ which has pivotal pairs $(P,Q)$ as objects and morphism all $f:P_{1}\rightarrow P_{2}$ as morphism between $(P_{1},Q_{1})$ and $(P_{1},P_{2})$. Hence the datum for a pivotal diagram consists of sets of pivotal pairs $(P_{i},Q_{i})$ and pivotal morphisms $f_{j}:P_{j_{s}}\rightarrow P_{j_{t}}$ between them, where $i,j_{s},j_{t}\in I$ and $j\in J$ for index sets $I,J$. We define the category of $\mathbb{D}$-intertwined objects in $C$, denoted by $\ct{C}(\mathbb{D})$, as follows: the objects of $\ct{C}(\mathbb{D})$ are pairs $(X,\lbrace \sigma_{i}\rbrace_{i\in I} ) $, where $X$ is an object of $\ct{C}$ and $\lbrace \sigma_{i}\rbrace_{i\in I}$ a family of morphisms $\sigma_{i}: X\tn P_{i}\rightarrow  P_{i}\tn X$ for $i\in I$, so that for all $i\in I$, the pair $(X,\sigma_{i})$ belongs to $\ct{C}(P_{i},Q_{i})$, and for any $j\in J$, 
$$(f_{j}\tn X)\sigma_{j_{s}} = \sigma_{j_{t}} (X\tn f_{j}): X\tn P_{j_{s}} \rightarrow P_{j_{t}}\tn X $$ 
holds. Morphisms between objects $(X,\lbrace \sigma_{i}\rbrace_{i\in I} ) $ and $(Y,\lbrace \tau_{i}\rbrace_{i\in I} ) $ are morphisms $f:X\rightarrow Y$ in $\ct{C}$, which satisfy $\tau_{i} (f\tn P_{i})= (P_{i}\tn f) \sigma_{i}$ for all $i\in I$. 

Observe that $\ct{C}(\mathbb{D})$ lifts the monoidal structure of $\ct{C}$: we can define the tensor of two objects $(X,\lbrace \sigma_{i}\rbrace_{i\in I} ) $ and $(Y,\lbrace \tau_{i}\rbrace_{i\in I} ) $ of $\ct{C}(\mathbb{D})$ as 
$$(X,\lbrace \sigma_{i}\rbrace_{i\in I} ) \tn(Y,\lbrace \tau_{i}\rbrace_{i\in I} ) = \big( X\tn Y , \lbrace (\sigma_{i}\tn Y)(X\tn \tau_{i})\rbrace_{i\in I} \big) $$ 
By this definition, for any $i\in I$, the forgetful functor $U_{i}: \ct{C}(\mathbb{D})\rightarrow \ct{C}(P_{i},Q_{i})$ which sends a pair $(X,\lbrace \sigma_{i}\rbrace_{i\in I} ) $ to $(X,\sigma_{i})$, is strict monoidal. Consequently, the forgetful functor $U_{\mathbb{D}}: \ct{C}(\mathbb{D})\rightarrow \ct{C}$ sending pairs $(X,\lbrace \sigma_{i}\rbrace_{i\in I} ) $ to their underlying objects, $X$, also becomes strict monoidal. We must emphasize that the monoidal structure well-defined because for any $j\in J$, $f_{j}$ commutes with the relevant $P_{i}$-intertwinings, and thereby
\begin{align*}
(f_{j}\tn X\tn Y)(\sigma_{j_{s}}\tn Y)(X\tn \tau_{j_{s}}) &= (\sigma_{j_{t}}\tn Y) (X\tn f_{j}\tn Y)(X\tn \tau_{j_{s}})
 \\& =(\sigma_{j_{t}}\tn Y)(X\tn \tau_{j_{t}}) (X\tn f_{j})
\end{align*} 
holds. 
\begin{thm}\label{TCldDiag} If $\ct{C}$ is a left (right) closed monoidal category and $\mathbb{D}$ a pivotal diagram as described above, then $\ct{C}(\mathbb{D})$ has a left (right) closed monoidal structure which lifts that of $\ct{C}$ and the forgetful functor $U_{\mathbb{D}}$ is left (right) closed.
\end{thm}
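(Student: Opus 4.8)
The strategy is to build the inner homs of $\ct{C}(\mathbb{D})$ componentwise from those of the categories $\ct{C}(P_i,Q_i)$ and to reduce every verification to Theorem~\ref{TCld} together with one genuinely new coherence check. Fix objects $(A,\{\sigma^A_i\}_{i\in I})$ and $(B,\{\sigma^B_i\}_{i\in I})$ of $\ct{C}(\mathbb{D})$ and write $\eta^A,\epsilon^A$ for the unit and counit of $-\tn A\dashv[A,-]^l$ in $\ct{C}$. For each $i\in I$ let $\langle\sigma^A_i,\sigma^B_i\rangle_l$ be the $P_i$-intertwining on $[A,B]^l$ given by the formula in the proof of Theorem~\ref{TCld}; by that theorem $([A,B]^l,\langle\sigma^A_i,\sigma^B_i\rangle_l)\in\ct{C}(P_i,Q_i)$. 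So to obtain a candidate inner hom $[(A,\{\sigma^A_i\}),(B,\{\sigma^B_i\})]^l:=\big([A,B]^l,\{\langle\sigma^A_i,\sigma^B_i\rangle_l\}_{i\in I}\big)$ in $\ct{C}(\mathbb{D})$ the only thing left to prove is the coherence condition attached to each morphism $f_j:P_{j_s}\to P_{j_t}$ of $\mathbb{D}$, namely
\[
(f_j\tn[A,B]^l)\,\langle\sigma^A_{j_s},\sigma^B_{j_s}\rangle_l \;=\; \langle\sigma^A_{j_t},\sigma^B_{j_t}\rangle_l\,([A,B]^l\tn f_j).
\]

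I expect this identity to be the main obstacle, and I would prove it by a diagram chase that transports $f_j$ through the defining composite of $\langle\sigma^A_{j_s},\sigma^B_{j_s}\rangle_l$. First, the dinaturality identity $(f_j\tn Q_{j_s})\cvl_{j_s}=(P_{j_t}\tn\pr{f_j})\cvl_{j_t}$ (with $\pr{f_j}:Q_{j_t}\to Q_{j_s}$ the left dual of $f_j$) replaces $\cvl_{j_s}$ by $\cvl_{j_t}$ at the cost of inserting $\pr{f_j}$ on the $Q$-strand. Naturality of $\eta^A$ in its subscript transports $\pr{f_j}$ inside $[A,-]^l$, where it meets $\evl_{j_s}$; there $\evl_{j_s}(\pr{f_j}\tn P_{j_s})=\evl_{j_t}(Q_{j_t}\tn f_j)$ turns $\pr{f_j}$ back into $f_j$ on a $P$-strand. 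Finally the compatibility relations $(f_j\tn B)\sigma^B_{j_s}=\sigma^B_{j_t}(B\tn f_j)$ and $(A\tn f_j)(\sigma^A_{j_s})^{-1}=(\sigma^A_{j_t})^{-1}(f_j\tn A)$ — which hold because $(A,\{\sigma^A_i\})$ and $(B,\{\sigma^B_i\})$ are objects of $\ct{C}(\mathbb{D})$ — together with a second use of naturality of $\eta^A$ carry $f_j$ past $\sigma^B$, past the spectator $\epsilon^A_B$, and past $\sigma^A$, until it lands on the incoming $P_{j_s}$; at that point every remaining $\cvl,\evl,\sigma^A,\sigma^B$ carries the index $j_t$ and one reads off the right-hand side. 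Only the standard duality identities, naturality of $\eta^A$, and the $\ct{C}(\mathbb{D})$-compatibility of the $\sigma^A,\sigma^B$ are used; the pivotality of the morphisms $f_j$ does not itself enter. Like the verifications collected in Section~\ref{SDiag}, this chase is long but entirely mechanical.

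The remaining points are formal. A morphism of $\ct{C}(\mathbb{D})$ is by definition a morphism of $\ct{C}$ that is simultaneously a morphism in every $\ct{C}(P_i,Q_i)$; hence for $f:(B,\{\sigma^B_i\})\to(C,\{\sigma^C_i\})$ the morphism $[A,f]^l$ — which by Theorem~\ref{TCld} lies in every $\ct{C}(P_i,Q_i)$ — lies in $\ct{C}(\mathbb{D})$, so $[(A,\{\sigma^A_i\}),-]^l$ is a functor lifting $[A,-]^l$ along $U_{\mathbb{D}}$, with $U_{\mathbb{D}}[(A,\{\sigma^A_i\}),(B,\{\sigma^B_i\})]^l=[A,B]^l$ on the nose. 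Likewise, by Theorem~\ref{TCld} the components of $\eta^A$ and $\epsilon^A$ are morphisms in each $\ct{C}(P_i,Q_i)$, hence in $\ct{C}(\mathbb{D})$, and the triangle identities are inherited from $\ct{C}$; therefore $-\tn(A,\{\sigma^A_i\})\dashv[(A,\{\sigma^A_i\}),-]^l$ in $\ct{C}(\mathbb{D})$ and $U_{\mathbb{D}}$ is left closed. For the right closed case I would take the shortcut of Corollary~\ref{CRCld}: $\mathbb{D}$ induces a pivotal diagram in $\ct{C}^{\op}=(\ct{C},\tn^{\op})$ with the duality morphisms of each pair swapped; there is a strict monoidal isomorphism $\ct{C}^{\op}(\mathbb{D})\cong\ct{C}(\mathbb{D})^{\op}$ sending $(X,\{\sigma_i\})$ to $(X,\{\sigma_i^{-1}\})$ and commuting with the forgetful functors; and a right closed structure on $\ct{C}$ is a left closed structure on $\ct{C}^{\op}$. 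So the right closed statement follows from the left closed one just established, and the $P_i$-intertwinings on the right inner homs are read off exactly as in the paragraph following Corollary~\ref{CRCld}.
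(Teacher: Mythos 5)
Your proposal is correct and follows essentially the same route as the paper: the inner homs are built componentwise from the intertwinings $\langle\sigma_i,\tau_i\rangle_l$ of Theorem \ref{TCld}, and the only new verification is the compatibility $(f_j\tn[A,B]^l)\langle\sigma_{j_s},\tau_{j_s}\rangle_l=\langle\sigma_{j_t},\tau_{j_t}\rangle_l([A,B]^l\tn f_j)$, which the paper proves by exactly the chase you describe (slide $f_j$ to $\pr{f_j}$ at the evaluation, transport it by naturality of $\eta^A$, absorb it at the coevaluation), and indeed without invoking pivotality of $f_j$. The one minor divergence is the right-closed case, where the paper repeats the computation directly with the intertwinings $\langle\sigma_i,\tau_i\rangle_r$ (using that $\ov{\sigma}^{-1}$ inherits naturality in $\pr{f_j}$ from $\ov{\sigma}$), whereas you reduce to the left-closed case via the opposite-category isomorphism of Corollary \ref{CRCld}; both are valid.
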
 
\begin{proof} In Theorem \ref{TCld}, we have provided suitable $P_{i}$-intertwinings for inner homs of two objects in $\ct{C}(P_{i},Q_{i})$ and demonstrated that the unit and counits of the adjunctions commute with these intertwinings. Hence, if $(X,\lbrace \sigma_{i}\rbrace_{i\in I} ) $ and $(Y,\lbrace \tau_{i}\rbrace_{i\in I} ) $ are objects of $\ct{C}(\mathbb{D})$  , we only need to check whether the induced $\mathbb{D}$-intertwinings $\lbrace\langle \sigma_{i},\tau_{i} \rangle_{l}\rbrace_{i\in I}$ and $\lbrace \langle \sigma_{i},\tau_{i} \rangle_{r}\rbrace_{i\in I}$ commute with morphisms $f_{j}$ so that $([X,Y]^{l},\lbrace \langle \sigma_{i},\tau_{i} \rangle_{l}\rbrace_{i\in I} ) $ and $([X,Y]^{r},\lbrace \langle \sigma_{i},\tau_{i} \rangle_{r}\rbrace_{i\in I} ) $ provide inner homs in $\ct{C}(\mathbb{D})$, and lift the closed structure of $\ct{C}$. Let $j\in J$, then 
\begin{align*}
\langle \sigma_{j_{t}},\tau_{j_{t}} \rangle (&[X,Y]^{l} f_{j}) = (P_{j_{t}}[A,(\evl_{j_{t}} B)(Q_{j_{t}}\tau_{j_{t}})(Q_{j_{t}}\epsilon^{A}_{B}P_{j_{t}})(Q_{j_{t}}[A,B]^{l}\sigma^{-1}_{j_{t}})]^{l})
\\& (P_{j_{t}}\eta^{A}_{Q_{j_{t}}[A,B]^{l}P_{j_{t}}})(\cvl_{j_{t}}[A,B]^{l}P_{j_{t}})([X,Y]^{l} f_{j})
\\= &(P_{j_{t}}[A,(\evl_{j_{t}} B)(Q_{j_{t}}\tau_{j_{t}})(Q_{j_{t}}Y f_{j}) (Q_{j_{t}}\epsilon^{A}_{B}P_{j_{s}})(Q_{j_{t}}[A,B]^{l}\sigma^{-1}_{j_{s}})]^{l})
\\ &(P_{j_{t}}\eta^{A}_{Q_{j_{t}}[A,B]^{l}P_{j_{s}}})(\cvl_{j_{t}}[A,B]^{l}P_{j_{s}})
\\ =& (P_{j_{t}}[A,(\evl_{j_{s}} B)(\pr{f_{j}}\tn \tau_{j_{s}}) (Q_{j_{t}}\epsilon^{A}_{B}P_{j_{s}})(Q_{j_{t}}[A,B]^{l}\sigma^{-1}_{j_{s}})]^{l})
\\ &(P_{j_{t}}\eta^{A}_{Q_{j_{t}}[A,B]^{l}P_{j_{s}}})(\cvl_{j_{t}}[A,B]^{l}P_{j_{s}})
\\ =& (P_{j_{t}}[A,(\evl_{j_{s}} B)(Q_{j_{s}} \tau_{j_{s}}) (Q_{j_{s}}\epsilon^{A}_{B}P_{j_{s}})(Q_{j_{s}}[A,B]^{l}\sigma^{-1}_{j_{s}})]^{l})
\\ &(P_{j_{t}}\eta^{A}_{Q_{j_{s}}[A,B]^{l}P_{j_{s}}})((P_{j_{t}} \pr{f_{j}}) \cvl_{j_{t}}[A,B]^{l}P_{j_{s}})
\\ = &(f_{j}[X,Y]^{l})(P_{j_{s}}[A,(\evl_{j_{s}} B)(Q_{j_{s}} \tau_{j_{s}}) (Q_{j_{s}}\epsilon^{A}_{B}P_{j_{s}})(Q_{j_{s}}[A,B]^{l}\sigma^{-1}_{j_{s}})]^{l})
\\ &(P_{j_{s}}\eta^{A}_{Q_{j_{s}}[A,B]^{l}P_{j_{s}}})( \cvl_{j_{s}}[A,B]^{l}P_{j_{s}})= (f_{j}[X,Y]^{l}) \langle \sigma_{j_{s}},\tau_{j_{s}} \rangle_{l}
\end{align*}
holds, where $\pr{f_{j}}:= (\evl_{j_{t}}Q_{j_{s}})(Q_{j_{t}} f_{j} Q_{j_{s}}) (Q_{j_{t}} \cvl_{j_{s}})$. A similar computation follows for $\lbrace \langle \sigma_{i},\tau_{i} \rangle_{r}\rbrace_{i\in I}$ and 
\begin{align*}
\langle \sigma_{j_{t}},\tau_{j_{t}} \rangle_{r} (&[X,Y]^{r} f_{j}) = (P_{j_{t}}[A,(\evl_{j_{t}} B)(Q_{j_{t}}\tau_{j_{t}})(Q_{j_{t}}\Theta^{A}_{B}P_{j_{t}})(\ov{\sigma_{j_{t}}}^{-1}[A,B]^{r}P_{j_{t}})]^{r})
\\& (P_{j_{t}}\Gamma^{A}_{Q_{j_{t}}[A,B]^{r}P_{j_{t}}})(\cvl_{j_{t}}[A,B]^{r}P_{j_{t}})([X,Y]^{r} f_{j})
\\ = &(P_{j_{t}}[A,(\evl_{j_{t}} B)(Q_{j_{t}}\tau_{j_{t}})(Q_{j_{t}}Bf_{j})(Q_{j_{t}}\Theta^{A}_{B}P_{j_{s}})(\ov{\sigma_{j_{t}}}^{-1}[A,B]^{r}P_{j_{s}})]^{r})
\\& (P_{j_{t}}\Gamma^{A}_{Q_{j_{t}}[A,B]^{r}P_{j_{s}}})(\cvl_{j_{t}}[A,B]^{r}P_{j_{s}})
\\ = &(P_{j_{t}}[A,(\evl_{j_{s}} B)(\pr{f_{j}}\tn \tau_{j_{s}})(Q_{j_{t}}\Theta^{A}_{B}P_{j_{s}})(\ov{\sigma_{j_{t}}}^{-1}[A,B]^{r}P_{j_{s}})]^{r})
\\& (P_{j_{t}}\Gamma^{A}_{Q_{j_{t}}[A,B]^{r}P_{j_{s}}})(\cvl_{j_{t}}[A,B]^{r}P_{j_{s}})
\\ = &(P_{j_{t}}[A,(\evl_{j_{s}} B)(Q_{j_{s}}\tn \tau_{j_{s}})(Q_{j_{s}}\Theta^{A}_{B}P_{j_{s}})(\ov{\sigma_{j_{s}}}^{-1}[A,B]^{r}P_{j_{s}})]^{r})
\\ &(P_{j_{t}}\Gamma^{A}_{Q_{j_{s}}[A,B]^{r}P_{j_{s}}})((P_{j_{t}} \pr{f_{j}}) \cvl_{j_{t}}[A,B]^{r}P_{j_{s}})
\\ = &(f_{j}[X,Y]^{r})(P_{j_{s}}[A,(\evl_{j_{s}} B)(Q_{j_{s}}\tn \tau_{j_{s}})(Q_{j_{s}}\Theta^{A}_{B}P_{j_{s}})(\ov{\sigma_{j_{s}}}^{-1}[A,B]^{r}P_{j_{s}})]^{r})
\\ &(P_{j_{s}}\Gamma^{A}_{Q_{j_{s}}[A,B]^{r}P_{j_{s}}})(\cvl_{j_{s}}[A,B]^{r}P_{j_{s}})= (f_{j}[X,Y]^{r}) \langle \sigma_{j_{s}},\tau_{j_{s}} \rangle_{r}
\end{align*}
holds. We are using the fact that $\ov{\sigma_{j_{t}}}^{-1}(\pr{f_{j}}A)= (A\pr{f_{j}} )\ov{\sigma_{j_{s}}}^{-1}  $ which follows from $(\pr{f_{j}}A)\ov{\sigma_{j_{s}}}=\ov{\sigma_{j_{t}}} (A\pr{f_{j}} )  $. 
\end{proof}
As in Section \ref{SMnd}, one can construct the relevant Hopf monad for a pivotal diagram, when suitable colimits exist. The monad will be a quotient of the coproduct of $T_{i}$, where $T_{i}$ are the respective Hopf monads of each pair $(P_{i},Q_{i})$. We would also like to point out that as mentioned in Remark \ref{RPiv}, if one considers the pivotal diagram $\mathbb{D}$, consisting of the object $(P,Q)$ and the morphism $\varrho_{P}:(P,Q)\rightarrow (P,Q)$ in a pivotal category $\ct{C}$, the obtained category $\ct{C}(\mathbb{D})$ will lift the pivotal structure of $\ct{C}$. We will make this statement more precise in the next section.
\subsection{$\ct{C}(P,Q)$ as a Dual of a Monoidal Functor}\label{SDual}
In this section we comment on the connection between our work and the recent work on Tannaka-Krein Duality for bimonads, as presented in \cite{shimizu2019tannaka}.

In \cite{majid1992braided}, Majid introduced the \emph{dual of a strong monoidal functor} $U:\ct{D}\rightarrow \ct{C}$ as the category whose objects are pairs $(X,\sigma : X\tn U \Rightarrow U\tn X )$, where $X$ is an object of $\ct{C}$ and $\sigma$ a natural monoidal isomorphism. One can recover the center of a monoidal category, $\ct{C}$, as the dual of the identity functor $\mathrm{id}_{\ct{C}}: \ct{C}\rightarrow \ct{C}$. The dual, denoted by $U^{\circ}$, has a natural monoidal structure, as in Relation \ref{EqMonoidal}, so that the relevant forgetful functor to $\ct{C}$ becomes strict monoidal. If we weaken this definition to allow the braidings, $\sigma$, to not be isomorphisms, we arrive at the concept of the \emph{lax dual} or \emph{weak center}, and must distinguish between the right and left dual, each having objects of the form $(X,\sigma : X\tn U \Rightarrow U\tn X )$ and $(X,\sigma : U\tn X \Rightarrow X\tn U )$, respectively. However, if $\ct{D}$ is rigid, then all lax braidings will be invertible and the lax left dual, the lax right dual and $U^{\circ}$, will all agree \cite{schauenburg2017dual}.  

In \cite{shimizu2019tannaka}, given a reconstruction data i.e. a strong monoidal functor $U: \ct{D}\rightarrow \ct{C}$, from an essentially small monoidal category $\ct{D}$, Shimizu constructs the bimonad whose modules would recover the left lax dual of $U$, and describes this monad as a coend. This generalizes the work of \cite{bruguieres2012quantum} on the centralizability of Hopf monads. The tools in \cite{shimizu2019tannaka} are then used to provide a monadic setting for the FRT reconstruction for a braided object in $\ct{C}$, where $\ct{D}$ is the category of braids. In the same vein, our work can be viewed as a reconstruction for a pivotal pair, where $\ct{D}$ is the simplest pivotal category 

A choice of a pivotal pair in a monoidal category $\ct{C}$, exactly corresponds to a strict monoidal functor from the monoidal category generated by a single pivotal object, which we denote by $\mathrm{Piv}(1)$. In the language of \emph{monoidal signatures}, as described in Section 6.1 of \cite{shimizu2019tannaka}, the category $\mathrm{Piv}(1)$ is the monoidal category generated by two objects $1$ and $2$ and two pairs of duality morphisms making $2$ both the left and right dual of $1$. From this point of view, it should be clear that given any pivotal pair in $\ct{C}$, we have a strict monoidal functor $(P,Q):\mathrm{Piv}(1)\rightarrow \ct{C}$ which sends $1$ to $P$ and $2$ to $Q$. Additionally, any pivotal morphism $f:(P_{1},Q_{1})\rightarrow (P_{2},Q_{2})$ provides a natural transformation between the corresponding functors $\mathbf{f}: (P_{1},Q_{1})\Rightarrow (P_{2},Q_{1})$ with $\mathbf{f}_{1}=f$ and $\mathbf{f}_{2}= (\evl_{2} Q_{1} ) ( Q_{2}  f  Q_{1} ) (Q_{2} \cvl_{1})$.

Provided with a pivotal pair $(P,Q)$, the category $\ct{C}(P,Q)$ is exactly the dual of the corresponding monoidal functor $(P,Q)$. Any object $(A,\sigma)$ of $\ct{C}(P,Q)$ has a natural braiding $\mathbf{\sigma}$ defined by $\mathbf{\sigma}_{P}=\sigma$ and $\mathbf{\sigma}_{Q}=\ov{\sigma}^{-1}$ and it follows easily that the duality morphisms commute with these braidings. Conversely, for any object $(A,\mathbf{\sigma})$ in $(P,Q)^{\circ}$, $(A,\mathbf{\sigma}_{P})$ naturally becomes an object in $\ct{C}(P,Q)$, since the duality morphisms commute with $\mathbf{\sigma}$, which implies that $\mathbb{\sigma}_{Q}$ and $\ov{\sigma}$ must be inverses. We should also note that in \cite{schauenburg2017dual}, it is shown that the dual of a monoidal functor from a rigid category, lifts the left inner homs of the target category, when they exist. This result provides an indirect proof for Theorem \ref{TCld}. Additionally, given the reconstruction data $(P,Q):\mathrm{Piv}(1)\rightarrow \ct{C}$, the monad constructed in \cite{shimizu2019tannaka} is given by the coend 
$$ T(X)=\int^{a\in \mathrm{Piv}(1)}  (P,Q)(a) \tn X \tn (P,Q)(a)^{\vee}$$ 
which would exactly reduce to our definition in Section \ref{SMnd}. From this point of view, the parallel pairs described in Section \ref{SMnd} are the only pairs we consider since $\mathrm{Piv}(1)$ is only generated by the four duality morphisms. Similarly, a pivotal diagram as in Section \ref{SExtension} can be viewed as a strict monoidal functor from the category generated by several pivotal pairs and morphisms between them, with the relevant composition relations. Consequently, $\ct{C}(\mathbb{D})$ can be viewed as the dual of this functor, which by Theorem \ref{TCldDiag} will again be closed. 

We conclude this section by the following extension of Theorem \ref{TpivCP}: 
\begin{thm}\label{TDualPiv} Assume $U:\ct{D}\rightarrow \ct{C}$ is a pivotal functor between pivotal categories and strictly preserves duality morphisms i.e. the induced isomorphisms $\zeta : F(\pr{-})\rightarrow \pr{F(-)}$ are the identity morphisms. Then the dual of $U$ is a pivotal category such that the forgetful functor from $U^{\circ}$ to $\ct{C}$ preserves its pivotal structure.
\end{thm}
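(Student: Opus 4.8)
The plan is to mirror the proof of Theorem \ref{TpivCP}, viewing the category $\ct{C}(P,Q)$ there as the dual of a single reconstruction datum and $U^{\circ}$ here as the dual of the whole functor $U$. Since $\ct{D}$ is pivotal it is rigid, so by \cite{schauenburg2017dual} (the general form of Corollary \ref{CRig}) the category $U^{\circ}$ is rigid, $\ct{C}$ being rigid as well. Concretely, if $(X,\sigma)$ is an object of $U^{\circ}$ then each $U(d)$ is dualizable in $\ct{C}$ (strong monoidal functors preserve duals), and because $\zeta$ is the identity we have $\pr{U(d)}=U(\pr{d})$ with duality morphisms $U(\evl_{d}),U(\cvl_{d})$; hence the left dual of $(X,\sigma)$ is $\pr{(X,\sigma)}=(\pr{X},\sigma_{\pr{X}})$, where $(\sigma_{\pr{X}})_{d}=(\evl_{X}\tn U(d)\tn \pr{X})(\pr{X}\tn \sigma_{d}^{-1}\tn \pr{X})(\pr{X}\tn U(d)\tn \cvl_{X})$, and iterating, $\prescript{\vee\vee}{}{(X,\sigma)}=(\prescript{\vee\vee}{}{X},(\sigma_{\pr{X}})_{\prescript{\vee\vee}{}{X}})$. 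I claim that the pivotal structure on $U^{\circ}$ is simply $\varrho^{\ct{C}}$ itself, i.e. $\varrho^{U^{\circ}}_{(X,\sigma)}:=\varrho^{\ct{C}}_{X}$; everything then reduces to checking that each $\varrho^{\ct{C}}_{X}$ is a morphism $(X,\sigma)\rightarrow \prescript{\vee\vee}{}{(X,\sigma)}$ in $U^{\circ}$, and that the forgetful functor $V:U^{\circ}\rightarrow\ct{C}$ preserves this structure.

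The heart of the argument is the same diagram chase as in Theorem \ref{TpivCP}, now run for the dualizable pair $U(d),U(\pr{d})$ in place of $P,Q$, for each object $d$ of $\ct{D}$; it uses monoidality and naturality of $\varrho^{\ct{C}}$, the identity $\pr{(\varrho^{\ct{C}}_{X})}=(\varrho^{\ct{C}}_{\pr{X}})^{-1}$ from Appendix A of \cite{ng2007frobenius}, and the reconstruction of $\sigma_{d}$ from its mate. The one ingredient that in Theorem \ref{TpivCP} was handed to us by the hypothesis $\varrho_{P}=\mathrm{id}_{P}$ — namely, as isolated in Remark \ref{RPiv}, the fact that $\sigma$ commutes with $\varrho_{P}$ — is here automatic: since $\sigma$ is natural over the pivotal category $\ct{D}$, naturality applied to $\varrho^{\ct{D}}_{d}:d\rightarrow \prescript{\vee\vee}{}{d}$ gives $\sigma_{\prescript{\vee\vee}{}{d}}\,(X\tn U(\varrho^{\ct{D}}_{d}))=(U(\varrho^{\ct{D}}_{d})\tn X)\,\sigma_{d}$, and because $U$ is pivotal with $\zeta$ the identity, equation \ref{EqPivFun} reads $\varrho^{\ct{C}}_{U(d)}=U(\varrho^{\ct{D}}_{d})$, so $\sigma$ commutes with $\varrho^{\ct{C}}_{U(d)}$ for every $d$. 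Feeding this into the chase exactly as in Theorem \ref{TpivCP} yields $((\sigma_{\pr{X}})_{\prescript{\vee\vee}{}{X}})_{d}\,(\varrho^{\ct{C}}_{X}\tn U(d))=(U(d)\tn\varrho^{\ct{C}}_{X})\,\sigma_{d}$, which is precisely the statement that $\varrho^{\ct{C}}_{X}$ is a morphism $(X,\sigma)\rightarrow \prescript{\vee\vee}{}{(X,\sigma)}$ in $U^{\circ}$. This also makes precise the remark following Theorem \ref{TCldDiag} that the pivotal diagram consisting of $(P,Q)$ and $\varrho_{P}$ lifts the pivotal structure of $\ct{C}$.

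It remains to collect the formalities. The family $\varrho^{\ct{C}}$ is a natural isomorphism also inside $U^{\circ}$, since morphisms of $U^{\circ}$ are morphisms of $\ct{C}$, and it is monoidal inside $U^{\circ}$ because the monoidal structure of $U^{\circ}$ lies strictly over that of $\ct{C}$ (Relation \ref{EqMonoidal}) and $\varrho^{\ct{C}}$ already commutes with the canonical isomorphisms $\prescript{\vee\vee}{}{(A\tn B)}\cong \prescript{\vee\vee}{}{A}\tn\prescript{\vee\vee}{}{B}$ in $\ct{C}$; hence $U^{\circ}$ is pivotal. The forgetful functor $V:U^{\circ}\rightarrow\ct{C}$ is strict monoidal, and the underlying object of a dual in $U^{\circ}$ is the dual of the underlying object, so its comparison isomorphism $\zeta^{V}$ is the identity; thus the pivotal-functor condition \ref{EqPivFun} for $V$ amounts to $\varrho^{\ct{C}}_{X}=V(\varrho^{U^{\circ}}_{(X,\sigma)})=\varrho^{\ct{C}}_{X}$, which holds by construction, so $V$ preserves the pivotal structure. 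The only genuine work is the diagram chase of the second paragraph, and the main obstacle there is purely the bookkeeping: one must carry the non-strict comparisons for $\prescript{\vee\vee}{}{(-)}$ on $\ct{C}$ through the computation, exactly as was already done and flagged in the proof of Theorem \ref{TpivCP}.
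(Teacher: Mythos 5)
Your proposal is correct and follows essentially the same route as the paper: reduce to the argument of Theorem \ref{TpivCP} using the rigidity of $U^{\circ}$ and the dual formulas of Corollary \ref{CRig}, with the hypothesis $\varrho_{P}=\mathrm{id}_{P}$ replaced by the observation that naturality of $\sigma$ at $\varrho^{\ct{D}}_{d}$ together with the pivotal-functor condition (which, for $\zeta=\mathrm{id}$, reads $\varrho^{\ct{C}}_{U(d)}=U(\varrho^{\ct{D}}_{d})$) forces $\sigma$ to commute with $\varrho^{\ct{C}}_{U(d)}$. Your write-up merely makes explicit the bookkeeping that the paper leaves implicit.
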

\begin{proof} Since $\ct{C}$ is rigid, $U^{\circ}$ is a rigid category and the braidings on the duals are given exactly as described in Corollary \ref{CRig}. Hence the proof follows exactly as the proof of Theorem \ref{TpivCP}, where $\sigma$ is now a natural transformation. In the calculations of Theorem \ref{TpivCP}, we required $\ov{\sigma}$ to commute with $\varrho_{Q}$ i.e. in this general setting we need the pivotal structure $\varrho^{\ct{C}}$ in $\ct{C}$ to commute with the braidings. However, this follows directly by our assumptions in the statement and that the braidings commute with $U(\varrho^{\ct{D}})$, where $\varrho^{\ct{D}}$ denotes the pivotal structure of $\ct{D}$. 
\end{proof}

\section{Diagrams and Proof of Theorem 4.2}\label{SDiag}
In this section, we provide the necessary diagrams for the proof of Theorem \ref{TCld}. The large commutative rectangles marked with $\spadesuit$, commute by the naturality of the unit and the triangle identities. 
\vspace{3cm}

\adjustbox{scale=0.95,center}{
\begin{tikzcd}[row sep=3.3em ]\label{DiagUnit}
BP \arrow[d, "\eta_{B}^{A}P"'] \arrow[rd, "\cvl BP"] \arrow[rr, "\sigma_{B}"] &                                                                                                                                                              & P B \arrow[d, "P\eta_{B}^{A}"]                                             \\
{{[A,B A]^{l}}  P} \arrow[d, "{\cvl {[A,B A]^{l}  P }}"']                                       & PQBP \arrow[d] \arrow[ld, "PQ\eta^{A}_{B}P"'] \arrow[ru, "P(B\evl)(\ov{\sigma_{B}}P)"] \arrow[d, "P\eta^{A}_{QBP}"']                                          & {P [A,B A]^{l} }                                                           \\
{PQ[A,B A]^{l}  P } \arrow[d, "{{P\eta^{A}_{Q[A,B A]^{l}  P}} }"']                              & {P[A,QB  PA]^{l}} \arrow[d, "{P[A,QB  \sigma_{A}^{-1}]^{l}}"] \arrow[ld, "{P[A,Q\eta_{B}^{A}  PA]^{l}}" description] \arrow[ru, "{P[A,(B\evl)(\ov{\sigma_{B}}P)A]^{l}}" description] & {P[A,Q PBA]^{l} } \arrow[u, "{P[A,\evl BA]^{l} }"']                         \\
{P[A,Q[A,B A]^{l}  PA]^{l}} \arrow[d, "{ {P[A,Q[A,B A]^{l}  \sigma_{A}^{-1}]^{l} }}" description]          & {P[A,QBA  P]^{l} } \arrow[ld, "{ {P[A,Q\eta_{B}^{A} AP]^{l}} }"] \arrow[rd, equal]                                                                                 &                                                                            \\
{P[A,Q[A,B A]^{l}  AP]^{l} } \arrow[rr, "{ {P[A,Q\epsilon^{A}_{BA}  P]^{l} }}"']                &                                                                                                                                                              & {P[A,QBA  P]^{l} } \arrow[uu, "{P[A,Q(\sigma_{B}\tn \sigma_{A})]^{l} }"' description]
\end{tikzcd}}\captionof{figure}{Proof of unit commuting with $P$-intertwinings}

\vspace{0.5cm}\adjustbox{scale=0.95,center}{
\begin{tikzcd}[row sep=3.3em]\label{DiagCounit}
{[A,B]^{l}AP} \arrow[r, "{[A,B]^{l}\sigma_{A}}"] \arrow[d, equal]           & {[A,B]^{l}PA} \arrow[ld, "{[A,B]^{l}\sigma_{A}^{-1}}" description] \arrow[rr, "{\cvl [A,B]^{l}PA}"] &  & {PQ[A,B]^{l}PA} \arrow[d, "{{P\eta^{A}_{Q[A,B]^{l}P}A} }"] \arrow[lld, equal]                                                  \\
{[A,B]^{l}AP} \arrow[rd, "{\cvl[A,B]^{l}AP}" description] \arrow[d, "\epsilon_{B}^{A}P"'] & {PQ[A,B]^{l}PA} \arrow[d, "{{PQ[A,B]^{l}\sigma^{-1}_{A}}  }"]                                       &  & {P[A,Q[A,B]^{l}PA]^{l}A} \arrow[dd, "{{P[A,Q[A,B]^{l}\sigma_{A}^{-1}]^{l}A} }" description] \arrow[ll, "{{P\epsilon_{Q[A,B]^{l}PA}^{A}} }"'] \\
BP \arrow[rd, "\cvl BP" description] \arrow[d, "\sigma_{B}"']                             & {PQ[A,B]^{l}AP} \arrow[d, "{PQ\epsilon_{B}^{A}P} "]                                                 &  &                                                                                                                                              \\
PB                                                                                        & PQBP \arrow[l, "P(\evl Q)(Q\sigma_{B})"]                                                            &  & {P[A,Q[A,B]^{l}AP]^{l}A} \arrow[d, "{{P[A,Q\epsilon_{B}^{A}P]^{l}A} }"] \arrow[llu, "{{P\epsilon^{A}_{Q[A,B]^{l}AP}} }" description]         \\
{P[A,B]^{l}A} \arrow[u, "P\epsilon_{B}^{A}"]                                              &                                                                                                     &  & {P[A,QBP]^{l}A} \arrow[lll, "{P[A,(\evl B)(Q\sigma_{B})]^{l}A}"] \arrow[llu, "P\epsilon^{A}_{QBP}"]                                         
\end{tikzcd}}\captionof{figure}{Proof of counit commuting with $P$-intertwinings}
\thispagestyle{empty}

\begin{landscape}
\adjustbox{scale=0.7,center}{

\begin{tikzcd}[row sep=4.5em,column sep=4em, cramped]\label{Diag[AB]P}
                                                                                                                                &                                                                                                                              & {{[A,B]^{l}}P} \arrow[rrd, "{{\eta^{A}_{[A,B]^{l}}}P}"] \arrow[rrrd, equal, bend left]                                                                    &                                                                                        &                                                                 &                                                                                   \\
                                                                                                                                & {{[A,B]^{l}}PQP} \arrow[d, "{{\cvl[A,B]^{l}}PQP}"] \arrow[r, "{\eta^{A}_{[A,B]^{l}PQ}P}"] \arrow[ru, "{{[A,B]^{l}}\evr P}"] & {{[A,[A,B]^{l}PQA]^{l}}P} \arrow[d, "{{[A,\cvl [A,B]PQA]^{l}}P}"description] \arrow[rd, "{{[A,[A,B]^{l}P\ov{\sigma_{A}}^{-1}]^{l}}P}"] \arrow[rr, "{{[A,[A,B]^{l}\evr A]^{l}}P}"] &                                                                                        & {[A,[A,B]^{l}A]^{l}P} \arrow[r, "{[A,\epsilon_{B}^{A}]^{l}}P"]                           & {{[A,B]^{l}}P}                                                                    \\
{{[A,B]^{l}}P} \arrow[d, "{{\cvl [A,B]^{l}}P }"'] \arrow[ru, "{{[A,B]^{l}}P\cvr}"description] \arrow[rruu, equal, bend left] & {PQ{[A,B]^{l}}PQP} \arrow[d, "{P{\eta^{A}_{Q[A,B]^{l}P}}QP }"]                                                              & {{[A,PQ[A,B]^{l}PQA]^{l}}P} \arrow[d, "{{[A,PQ[A,B]^{l}P\ov{\sigma_{A}}^{-1}]^{l}}P}"description]                                                                                 & {{[A,[A,B]^{l}PAQ]^{l}}P} \arrow[ld, "{{[A,\cvl [A,B]^{l}PAQ]^{l}}P}"] \arrow[r, "{{[A,[A,B]^{l}\sigma^{-1}_{A}Q]^{l}}P}",bend left] & {[A,[A,B]^{l}APQ]^{l}P}\arrow[ldd, "{[A,\cvl [A,B]^{l}APQ]^{l}P}"'] \arrow[d, "{[A,\epsilon_{B}^{A}PQ]^{l}P}"] \arrow[u, "{[A,[A,B]^{l}A\evr ]^{l}P}"']          & {[A,BPQ]^{l}P} \arrow[u, "{[A,B\evl ]^{l}P}"']                                                       
    \\
{PQ{[A,B]^{l}}P} \arrow[d, "{P{\eta^{A}_{Q[A,B]^{l}P}}}"'] \arrow[ru, "{PQ{[A,B]^{l}}P\cvr}"description]                                   & {P{[A,Q[A,B]^{l}PA]^{l}}QP} \arrow[d, "{P{[A,Q[A,B]^{l}\sigma^{-1}_{A}]^{l}}QP }"]                                          & {{[A,PQ[A,B]^{l}PAQ]^{l}}P} \arrow[rd, "{ {[A,PQ[A,B]^{l}\sigma_{A}^{-1}Q]^{l}}P}"']                                                                                    &                                                                                        & {[A,BPQ]^{l}P} \arrow[dd, "{[A,\cvl BPQ ]^{l}P}"', bend right=18] \arrow[ru, equal, bend right] & {[A,PBQ]^{l}P} \arrow[u, "{[A,\sigma^{-1}_{B}Q]^{l}P}"']                                                               \\
{P[A,Q[A,B]^{l}PA]^{l}} \arrow[d, "{P[A,Q[A,B]^{l}\sigma_{A}^{-1}]^{l}}"'] \arrow[ru, "{{P[A,Q[A,B]^{l}PA]^{l}}\cvr}"description]          & {P[A,Q[A,B]^{l}AP]^{l}QP}\arrow[rr, "\spadesuit", phantom] \arrow[rd, "{P[A,Q\epsilon_{B}^{A}P]^{l}QP}"']                                                                                                         &                                                                                                                                                                         & {{[A,PQ[A,B]^{l}APQ]^{l}}P} \arrow[rd, "{{[A,PQ\epsilon^{A}_{B}PQ]^{l}}P }"']          &                                                                 & {{[A,P[A,B]^{l}AQ]^{l}}P} \arrow[u, "{{[A,P\epsilon_{B}^{A}Q]^{l}}P}"']                                         \\
{P[A,Q[A,B]^{l}AP]^{l}} \arrow[rd, "{P[A,Q\epsilon_{B}^{A}P]^{l}} "'] \arrow[ru, "{{P[A,Q[A,B]^{l}AP]^{l}}\cvr}"description]                                            &                                                                                                                              & {P[A,QBP]^{l}QP} \arrow[r, "{P[A,Q\sigma_{B}]^{l}QP}"']                                                                                                                                       & {P[A,QPB]^{l}QP} \arrow[rd, "{P[A,\evl B]^{l}QP}"']                                                     & {{[A,PQBPQ]^{l}}P} \arrow[ruu, "{{[A,P(\evl B)(Q\sigma_{B})Q]^{l}}P}"description, bend left]                           & {{[A,P[A,B]^{l}QA]^{l}}P} \arrow[u, "{{[AP[A,B]^{l}\ov{\sigma_{A}}^{-1}]^{l}}P}"'] \\
                                                                                                                                & {P[A,QBP]^{l}} \arrow[r, "{P[A,Q\sigma_{B}]^{l}}"'] \arrow[ru, "{{P[A,QBP]^{l}}\cvr}"]                                       & {P[A,QPB]^{l}} \arrow[r, "{P[A,\evl B]^{l}}"'] \arrow[ru, "{{P[A,QPB]^{l}} \cvr}"]                                                                                      & {P[A,B]^{l}} \arrow[r, "{P[A,B]^{l}\cvr}"']                                            & {{P[A,B]^{l}}QP} \arrow[ru, "{\eta^{A}_{P[A,B]^{l}Q}P} "']                               &                                                                                  
\end{tikzcd}
}\captionof{figure}{Proof of $ \langle \sigma_{A},\sigma_{B} \rangle_{l}^{-1} \langle \sigma_{A},\sigma_{B} \rangle_{l}= \mathrm{id}_{[A,B]^{l}P}$}
\thispagestyle{empty}
\end{landscape}

\begin{landscape}
\adjustbox{scale=0.7,center}{

\begin{tikzcd}[row sep=4.5em,column sep=5em, cramped]\label{DiagP[AB]}
                                                                                                                                                                                                                   & {PQP[A,B]^{l}} \arrow[rrrd, "{P\eta^{A}_{QP[A,B]^{l}}}"'] \arrow[r, "{P\evl [A,B]^{l}}"] \arrow[d, "{PQP[A,B]^{l}\cvr}"] & {P[A,B]^{l}} \arrow[rr, "{P\eta^{A}_{[A,B]^{l}}}"] \arrow[rrr, equal, bend left] &                                                                         & {P[A,[A,B]^{l}A]^{l}} \arrow[r, "{P[A,\epsilon_{B}^{A}]^{l}}"']                                                                                                                                   & {P[A,B]^{l}}                                                               \\
{P[A,B]^{l}} \arrow[d, "{{P[A,B]^{l}\cvr} }"'] \arrow[ru, "{{\cvl P[A,B]^{l}} }"description] \arrow[rru, equal, bend left=60] & {PQP[A,B]^{l}QP} \arrow[d, "{PQ\eta^{A}_{P[A,B]^{l}Q}P}"] \arrow[r, "{P\eta^{A}_{QP[A,B]^{l}QP}}"']                      & {P[A,QP[A,B]^{l}QPA]^{l}} \arrow[d, "{P[A,QP[A,B]^{l}Q\sigma_{A}^{-1}]^{l}}"]                  &                                                                         & {P[A,QP[A,B]^{l}A]^{l}} \arrow[ll, "{P[A,QP[A,B]^{l}\cvr A]^{l}}"] \arrow[ldd, "{P[A,QP[A,B]^{l}A\cvr ]^{l}}"'] \arrow[d, "{P[A,QP\epsilon^{A}_{B}]^{l}}"] \arrow[u, "{P[A,\evl [A,B]^{l}A]^{l}}"] & {P[A,QPB]^{l}} \arrow[u, "{P[A,\evl B]^{l}}"']                              \\
{P[A,B]^{l}QP} \arrow[d, "{{\eta^{A}_{P[A,B]^{l}Q}P} }"'] \arrow[ru, "{\cvl P[A,B]^{l}QP}"description]                                      & {PQ[A,P[A,B]^{l}QA]^{l}P} \arrow[d, "{PQ[A,P[A,B]^{l}\ov{\sigma_{A}}]^{l}P}"]                                             & {P[A,QP[A,B]^{l}QAP]^{l}} \arrow[rd, "{P[A,QP[A,B]^{l}\ov{\sigma_{A}}P]^{l}}"']                 &                                                                         & {P[A,QPB]^{l}} \arrow[r, "{P[A,Q\sigma_{B}^{-1}]^{l}}"] \arrow[dd, "{P[A,QPB\cvr]^{l}}"', bend right=15] \arrow[ru, equal]                                                                          & {P[A,QBP]^{l}} \arrow[u, "{{P[A,Q\sigma_{B}]^{l}} }"']                     \\
{[A,P[A,B]^{l}QA]^{l}P } \arrow[d, "{[A,P[A,B]^{l}\ov{\sigma_{A}}]^{l}P }"'] \arrow[ru, "{\cvl [A,P[A,B]^{l}QA]^{l}P }"description]         & {PQ[A,P[A,B]^{l}AQ]^{l}P }\arrow[rr, "\spadesuit", phantom] \arrow[rd, "{[A,P\epsilon_{B}^{A}Q]^{l}P }"']                                                  &                                                                                                 & {P[A,QP[A,B]^{l}AQP]^{l}} \arrow[rd, "{P[A,QP\epsilon^{A}_{B}QP]^{l}}"'] &                                                                                                                                                                                                   & {P[A,Q[A,B]^{l}AP]^{l}} \arrow[u, "{P[A,Q\epsilon_{B}^{A}P]^{l}}"']        \\
{[A,P[A,B]^{l}AQ]^{l}P } \arrow[rd, "{[A,P\epsilon_{B}^{A}Q]^{l}P }"'] \arrow[ru, "{\cvl[A,PBQ]^{l}P }"description]                         &                                                                                                                           & {PQ[A,PBQ]^{l}P } \arrow[r, "{[A,\sigma^{-1}_{B}Q]^{l}P }"']                                    & {PQ[A,BPQ]^{l}P } \arrow[rd, "{{PQ[A,B\evr ]^{l}P } }"']                & {P[A,QPBQP]^{l}} \arrow[ruu, "{P[A,Q(B\evr)(\sigma^{-1}_{B}Q)P]^{l}}"description, bend left]                                                                                                                           & {P[A,Q[A,B]^{l}PA]^{l}} \arrow[u, "{P[A,Q[A,B]^{l}\sigma^{-1}_{A}]^{l}}"'] \\
                                                                                                                                 & {[A,PBQ]^{l}P } \arrow[r, "{[A,\sigma^{-1}_{B}Q]^{l}P }"'] \arrow[ru, "{\cvl [A,PBQ]^{l}P }"]                             & {[A,BPQ]^{l}P } \arrow[r, "{[A,B\evr]^{l}P }"'] \arrow[ru, "{\cvl[A,BPQ]^{l}P }"]               & {[A,B]^{l}P } \arrow[r, "{\cvl [A,B]^{l}P }"']                          & {PQ[A,B]^{l}P } \arrow[ru, "{P\eta^{A}_{Q[A,B]^{l}P }}"']                                                                                                                                         &                                                                           
\end{tikzcd}
}\captionof{figure}{Proof of $ \langle \sigma_{A},\sigma_{B} \rangle_{l} \langle \sigma_{A},\sigma_{B} \rangle_{l}^{-1}= \mathrm{id}_{P[A,B]^{l}}$}
\thispagestyle{empty}
\end{landscape}

\begin{landscape}
\adjustbox{scale=0.7,center}{
\begin{tikzcd}[row sep=4.5em,column sep=5em, cramped]\label{DiagQ[AB]}
                                                                                                                                   & {QPQ[A,B]^{l}} \arrow[d, "{QPQ[A,B]^{l}\cvl }"description] \arrow[rd, "{Q\eta^{A}_{PQ[A,B]^{l}}}"] \arrow[r, "{Q\evr [A,B]^{l}}"'] & {Q[A,B]^{l}} \arrow[rr, "{Q\eta^{A}_{[A,B]^{l}}}"] \arrow[rrr,equal, bend left]                                                                                                                           &                                                                          & {Q[A,[A,B]^{l}A]^{l}} \arrow[r, "{Q[A,\epsilon_{B}^{A}]^{l}}"']                                                           & {Q[A,B]^{l}}                                                               \\
{Q[A,B]^{l}} \arrow[d, "{Q[A,B]^{l}\cvl}"'] \arrow[ru, "{\cvr Q[A,B]^{l}}"description] \arrow[rru,equal, bend left=60]         & {QPQ[A,B]^{l}PQ} \arrow[d, "{QP\eta^{A}_{Q[A,B]^{l}P}Q}"description] \arrow[rd, "{Q\eta^{A}_{PQ[A,B]^{l}PQ}}"description]                     & {Q[A,PQ[A,B]^{l}A]^{l}} \arrow[d, "{Q[A,PQ[A,B]^{l}\cvl A]^{l}}"description] \arrow[rd, "{Q[A,PQ[A,B]^{l}A\cvl ]^{l}}"] \arrow[rr, "{Q[A,PQ\epsilon^{A}_{B}]^{l}}"'] \arrow[rru, "{Q[A,\evr [A,B]^{l}A]^{l}}"] &                                                                          & {Q[A,PQB]^{l}} \arrow[dd, "{Q[A,PQB\cvl]^{l}}"] \arrow[rd, "{Q[A,P\ov{\sigma_{B}}]^{l}}"] \arrow[ru, "{Q[A,\evr B]^{l}}"] & {Q[A,BPQ]^{l}} \arrow[u, "{Q[A,B\evr ]^{l}}"']                             \\
{Q[A,B]^{l}PQ} \arrow[ru, "{\cvr Q[A,B]^{l}PQ}"description] \arrow[d, "{\eta^{A}_{Q[A,B]^{l}P}Q}"']                                           & {QP[A,Q[A,B]^{l}PA]^{l}Q} \arrow[d, "{QP[A,Q[A,B]^{l}\sigma^{-1}_{A}]^{l}Q}"]                                            & {Q[A,PQ[A,B]^{l}PQA]^{l}} \arrow[d, "{Q[A,PQ[A,B]^{l}P\ov{\sigma_{A}}]^{l}}"description]                                                                                                                      & {Q[A,PQ[A,B]^{l}APQ]^{l}} \arrow[rd, "{Q[A,PQ\epsilon_{B}^{A}PQ]^{l}}"] &                                                                                                                           & {Q[A,PBQ]^{l}} \arrow[u, "{Q[A,\sigma_{B}^{-1}Q]^{l}}"']                   \\
{[A,Q[A,B]^{l}PA]^{l}Q} \arrow[d, "{[A,Q[A,B]^{l}\sigma^{-1}_{A}]^{l}Q}"'] \arrow[ru, "{\cvr [A,Q[A,B]^{l}\sigma^{-1}_{A}]^{l}Q}"description] & {QP[A,Q[A,B]^{l}AP]^{l}Q} \arrow[rd, "{{QP[A,Q\epsilon^{A}_{B}P]^{l}Q} }"']                                               & {Q[A,PQ[A,B]^{l}PAQ]^{l}}  \arrow[ru, "{Q[A,PQ[A,B]^{l}\sigma^{-1}_{A}Q]^{l}}"']                                                                                        &                                                                          & {Q[A,PQBPQ]^{l}} \arrow[ru, "{Q[A,P(\evl B)(Q\sigma_{B})Q]^{l}}"description]                                                        & {Q[A,P[A,B]^{l}AQ]^{l}} \arrow[u, "{Q[A,P\epsilon_{B}^{A}Q]^{l}}"']        \\
{[A,Q[A,B]^{l}AP]^{l}Q} \arrow[rd, "{[A,Q\epsilon_{B}^{A}P]^{l}Q}"'] \arrow[ru, "{\cvr [A,Q[A,B]^{l}AP]^{l}Q}"description]                   &                                                                                                                           & {QP[A,QBP]^{l}Q} \arrow[r, "{QP[A,Q\sigma_{B}]^{l}Q}"']                                                                                                                                             & {QP[A,QPB]^{l}Q} \arrow[rd, "{QP[A,\evl B]^{l}Q}"] \arrow[ru, "\spadesuit", phantom]                      &                                                                                                                           & {Q[A,P[A,B]^{l}QA]^{l}} \arrow[u, "{Q[A,P[A,B]^{l}\ov{\sigma_{A}}]^{l}}"'] \\
                                                                                                                                   & {[A,QBP]^{l}Q} \arrow[r, "{[A,Q\sigma_{B}]^{l}Q}"'] \arrow[ru, "{\cvr[A,QBP]^{l}Q}"']                                     & {[A,QPB]^{l}Q} \arrow[r, "{[A,\evl B]^{l}Q}"'] \arrow[ru, "{\cvr[A,QPB]^{l}Q}"']                                                                                                                    & {[A,B]^{l}Q} \arrow[r, "{\cvr [A,B]^{l}Q}"']                             & {QP[A,B]^{l}Q} \arrow[ru, "{Q\eta^{A}_{P[A,B]^{l}Q}}"']                                                                   &                                                                           
\end{tikzcd}
}\captionof{figure}{Proof of $ \ov{\langle \sigma_{A},\sigma_{B} \rangle_{l}}^{-1}\ov{ \langle \sigma_{A},\sigma_{B} \rangle_{l}}= \mathrm{id}_{Q[A,B]^{l}}$}
\thispagestyle{empty}
\end{landscape}
\begin{landscape}
\adjustbox{scale=0.7,center}{
\begin{tikzcd}[row sep=4.5em,column sep=5em, cramped]\label{Diag[AB]Q}
                                                                                                                                                                                                               &                                                                                                                          & {[A,B]^{l}Q} \arrow[rrd, "{\eta^{A}_{[A,B]^{l}}Q}"] \arrow[rrrd,equal,  bend left]                                                                                   &                                                                                                                                                            &                                                                                                                                 &                                                                            \\
                                                                                                                                & {[A,B]^{l}QPQ} \arrow[d, "{\cvr[A,B]^{l}QPQ}"description] \arrow[r, "{\eta_{[A,B]^{l}QP}^{A}Q}"'] \arrow[ru, "{[A,B]^{l}\evl Q}"'] & {[A,[A,B]^{l}QPA]^{l}Q} \arrow[d, "{[A,\cvr[A,B]^{l}QPA]^{l}Q}"'] \arrow[rd, "{[A,[A,B]^{l}Q\sigma^{-1}_{A}]^{l}Q}"] \arrow[rr, "{[A,[A,B]^{l}\evl A]^{l}Q}"] &                                                                                                                                                            & {[A,[A,B]^{l}A]^{l}Q} \arrow[r, "{[A,\epsilon_{B}^{A}]^{l}Q}"]                                                                       & {[A,B]^{l}Q}                                                               \\
{[A,B]^{l}Q} \arrow[d, "{{\cvr[A,B]^{l}Q}  }"'] \arrow[ru, "{{[A,B]^{l}Q\cvl}  }"description] \arrow[rruu,equal, bend left] & {QP[A,B]^{l}QPQ} \arrow[d, "{{Q\eta^{A}_{P[A,B]^{l}Q}} PQ}"] \arrow[r, "{\eta_{QP[A,B]^{l}QP}^{A}Q}"']                  & {[A,QP[A,B]^{l}QPA]^{l}Q} \arrow[d, "{[A,QP[A,B]^{l}Q\sigma^{-1}_{A}]^{l}Q}"']                                                                                & {[A,[A,B]^{l}QAP]^{l}Q} \arrow[ld, "{[A,\cvr [A,B]^{l}QAP]^{l}Q}", bend right] \arrow[d, "{[A,[A,B]^{l}\ov{\sigma_{A}}P]^{l}Q}"description]                                      &                                                                                                                                 & {[A,QPB]^{l}Q} \arrow[u, "{[A,\evl B]^{l}Q}"']                             \\
{QP[A,B]^{l}Q} \arrow[d, "{{Q\eta^{A}_{P[A,B]^{l}Q}} }"'] \arrow[ru, "{{QP[A,B]^{l}Q\cvl} }"description]                                   & {Q[A,P[A,B]^{l}QA]^{l}PQ} \arrow[d, "{Q[A,P[A,B]^{l}\ov{\sigma_{A}}]^{l}PQ}"]                                           & {[A,QP[A,B]^{l}QAP]^{l}Q} \arrow[d, "{[A,QP[A,B]^{l}\ov{\sigma_{B}}P]^{l}Q}"description]                                                                                & {[A,[A,B]^{l}AQP]^{l}Q} \arrow[ld, "{[A,\cvr [A,B]^{l}AQP]^{l}Q}"] \arrow[r, "{[A,\epsilon_{B}^{A}QP]^{l}Q}"'] \arrow[ruu, "{[A,[A,B]^{l}A\evl ]^{l}Q}"'] & {[A,BQP]^{l}Q} \arrow[d, "{[A,\cvr BQP]^{l}Q}"'] \arrow[ruu, "{[A,B\evl ]^{l}Q}"'] \arrow[r, "{[A,\ov{\sigma_{B}}^{-1}P]^{l}Q}"] & {[A,QBP]^{l}Q} \arrow[u, "{[A,Q\sigma_{A}]^{l}Q}"']                        \\
{Q[A,P[A,B]^{l}QA]^{l}} \arrow[d, "{Q[A,P[A,B]^{l}\ov{\sigma_{A}}]^{l}}"'] \arrow[ru, "{{Q[A,P[A,B]^{l}QA]^{l}}\cvl}"description]          & {{Q[A,P[A,B]^{l}AQ]^{l}}PQ} \arrow[rd, "{{Q[A,P\epsilon_{B}^{A}Q]^{l}}PQ}"']                                             & {[A,QP[A,B]^{l}AQP]^{l}Q} \arrow[rr, "{[A,QP\epsilon_{B}^{A}QP]^{l}Q}"']                                                                                      &                                                                                                                                                            & {[A,QPBQP]^{l}Q} \arrow[ru, "{[A,Q(B\evr)(\sigma_{B}^{-1}Q)P]^{l}Q}"description]                                                                                              & {[A,Q[A,B]^{l}AP]^{l}Q} \arrow[u, "{[A,Q\epsilon^{A}_{B}P]^{l}Q}"']        \\
{Q[A,P[A,B]^{l}AQ]^{l}} \arrow[rd, "{Q[A,P\epsilon_{B}^{A}Q]^{l}}"'] \arrow[ru, "{Q[A,P[A,B]^{l}AQ]^{l}\cvl}"description]                  &                                                                                                                          & {Q[A,PBQ]^{l}PQ} \arrow[r, "{Q[A,\sigma_{B}^{-1}Q]^{l}PQ}"]                                                                                                  & {Q[A,BPQ]^{l}PQ} \arrow[rd, "{Q[A,B\evr]^{l}PQ}"']                                                                                                         &                                                                                                                                 & {[A,Q[A,B]^{l}PA]^{l}Q} \arrow[u, "{[A,Q[A,B]^{l}\sigma_{A}^{-1}]^{l}Q}"'] \\
                                                                                                                                & {Q[A,PBQ]^{l}} \arrow[r, "{Q[A,\sigma_{B}^{-1}Q]^{l}}"'] \arrow[ru, "{Q[A,PBQ]^{l}\cvl}"]                                & {Q[A,BPQ]^{l}} \arrow[r, "{Q[A,B\evr]^{l}}"'] \arrow[ru, "{Q[A,BPQ]^{l}\cvl}"]                                                                                & {Q[A,B]^{l}} \arrow[r, "{Q[A,B]^{l}\cvl}"']                                                                                                                & {Q[A,B]^{l}PQ} \arrow[uu, "\spadesuit", phantom]\arrow[ru, "{\eta^{A}_{Q[A,B]^{l}P}Q}"']                                                                         &                                                                           
\end{tikzcd}
}\captionof{figure}{Proof of $ \ov{\langle \sigma_{A},\sigma_{B} \rangle_{l}}\ov{ \langle \sigma_{A},\sigma_{B} \rangle_{l}}^{-1}= \mathrm{id}_{[A,B]^{l}Q}$}
\thispagestyle{empty}
\end{landscape}
\bibliographystyle{plain}
\bibliography{hopf}
\end{document}